
\documentclass[12pt,a4paper,psamsfonts]{amsart}
\usepackage{fancyhdr}
\usepackage{appendix}
\usepackage{amssymb,amscd,amsxtra,calc}
\usepackage{mathrsfs}
\usepackage{cmmib57}
\usepackage{multirow}
\usepackage[all]{xy}
\usepackage[colorlinks,linkcolor=blue,anchorcolor=blue,citecolor=green]{hyperref}
\setlength{\topmargin}{0cm}
\setlength{\oddsidemargin}{0cm}
\setlength{\evensidemargin}{0cm}
\setlength{\marginparwidth}{0cm}
\setlength{\marginparsep}{0cm}

\setlength{\textheight}{\paperheight - 2in -35pt}
\setlength{\textwidth}{\paperwidth - 2in}
\setlength{\headheight}{12.5pt}
\setlength{\headsep}{25pt}
\setlength{\footskip}{30pt}

\pagestyle{headings}

\theoremstyle{plain}
    \newtheorem{thm}{Theorem}[section]

    \newtheorem{corollary}[thm]{Corollary}
    \newtheorem{example}[thm]{Example}
    \newtheorem{lemma}[thm]{Lemma}
    \newtheorem{proposition}[thm]{Proposition}
    \newtheorem{question}[thm]{Question}
    \newtheorem{theorem}[thm]{Theorem}

\theoremstyle{definition}
    \newtheorem{definition}[thm]{Definition}

    \newtheorem{remark}[thm]{Remark}
\theoremstyle{remark}

\newcommand{\C}{\mathbb{C}}

\newcommand{\K}{\mathbb{K}}

\newcommand{\Q}{\mathbb{Q}}

\newcommand{\R}{\mathbb{R}}

\newcommand{\Z}{\mathbb{Z}}

\newcommand{\Aut}{\operatorname{Aut}}

\newcommand{\ch}{\operatorname{char }}

\newcommand{\diag}{\operatorname{diag}}
\newcommand{\End}{\operatorname{End}}
\newcommand{\Exc}{\operatorname{Exc}}

\newcommand{\IAmp}{\operatorname{IAmp}}
\newcommand{\id}{\operatorname{id}}

\newcommand{\Ker}{\operatorname{Ker}}
\newcommand{\NE}{\overline{\operatorname{NE}}}

\newcommand{\NS}{\operatorname{NS}}

\newcommand{\Pol}{\operatorname{Pol}}

\newcommand{\SEnd}{\operatorname{SEnd}}

\newcommand{\Sing}{\operatorname{Sing}}

\newcommand{\variety}{\operatorname{variety}}

\newcommand{\Per}{\operatorname{Per}}

\newcommand{\N}{\operatorname{N}}

\newcommand{\Alb}{\operatorname{Alb}}

\newcommand{\alg}{\mathrm{alg}}
\newcommand{\reg}{\mathrm{reg}}

\begin{document}

\title[Semi-group structure of all endomorphisms]
{Semi-group structure of all endomorphisms of a projective variety admitting a polarized endomorphism}

\author{Sheng Meng}

\address
{
\textsc{Department of Mathematics} \endgraf
\textsc{National University of Singapore,
Singapore 119076, Republic of Singapore
}}
\email{math1103@outlook.com}
\email{ms@u.nus.edu}

\author{De-Qi Zhang}

\address
{
\textsc{Department of Mathematics} \endgraf
\textsc{National University of Singapore,
Singapore 119076, Republic of Singapore
}}
\email{matzdq@nus.edu.sg}

\begin{abstract}
Let $X$ be a projective variety admitting a polarized (or more generally, int-amplified) endomorphism.
We show: there are only finitely many contractible extremal rays; and when $X$ is $\Q$-factorial normal, every minimal model program is equivariant relative to the monoid $\SEnd(X)$ of all surjective endomorphisms, up to finite index.

Further, when $X$ is rationally connected and smooth, we show:
there is a finite-index submonoid $G$ of $\SEnd(X)$ such that $G$
acts via pullback as diagonal (and hence commutative)
matrices on the Neron-Severi group;
the full automorphisms group $\Aut(X)$ has finitely many connected components;
and every amplified endomorphism is int-amplified.
\end{abstract}

\subjclass[2010]{
14E30,   
32H50, 
08A35.  
}

\keywords{polarized endomorphism, amplified endomorphism, iteration, equivariant MMP, $Q$-abelian variety}

\maketitle
\tableofcontents

\section{Introduction}
We work over an algebraically closed field $k$ which has characteristic zero (unless otherwise indicated),
and is uncountable (only used to guarantee the birational invariance of the rational connectedness property).
Let $f$ be a surjective endomorphism of a projective variety $X$.
We say that $f$ is $q$-{\it polarized} if $f^*L\sim qL$ (linear equivalence) for some ample Cartier divisor $L$ and integer $q>1$.
We say
that $f$ is {\it amplified} (resp. {\it int-amplified}),
if $f^*L-L$ is ample for some Cartier (resp.~ample Cartier) divisor $L$.
The notion of amplified endomorphisms $f$ was first defined by Krieger and Reschke (cf.~\cite{Kr}).
Fakhruddin showed that
such $f$ has a countable Zariski-dense subset of periodic points (cf. \cite[Theorem 5.1]{Fak}).

We refer to \cite{KM} for the definitions of log canonical (lc), klt or terminal singularities.
A sequence $X = X_1 \dasharrow X_2 \dasharrow \cdots$ of log MMP (= {\it log minimal model program}) consists of
the contraction $X_i \dasharrow X_{i+1}$ of a $(K_{X_i} + \Delta_i)$-negative extremal ray
for some log canonical pair $(X_i, \Delta_i)$
with $\Delta_i$ being an $\R$-Cartier effective divisor, which is of divisorial, flip or Fano type.
An MMP is a log MMP with $\Delta = 0$.
We refer to \cite[Theorem 1.1]{Fu11} for the cone theorem of lc pairs and \cite[Corollary 1.2]{Bi} for the existence of lc flips.

A submonoid $G$ of a monoid $\Gamma$ is said to be of {\it finite-index} in $\Gamma$
if there is a chain $G = G_0 \le G_1 \le \cdots \le G_r = \Gamma$ of submonoids and homomorphisms $\rho_i : G_i \to F_i$ such that $\Ker(\rho_i) = G_{i-1}$ and all $F_i$ are finite
{\it groups}.

Theorem \ref{main-thm-finite-R} below is the most crucial result of the paper.
First, an lc pair $(X, \Delta)$ may have infinitely many $(K_X + \Delta)$-negative extremal rays.
Theorem \ref{main-thm-finite-R} below implies that this case will never happen if we assume $X$ admits a polarized (or int-amplified) endomorphism (see also Theorem \ref{prop-finite-cont-r} for a more general result).

Let $\SEnd(X)$ be the set of {\it all surjective endomorphisms} on $X$.

Theorem \ref{main-thm-finite-R} below says that every finite sequence of MMP starting from a $\Q$-factorial normal $X$ is equivariant (cf.~Definition \ref{def-G-equiv}) relative to $\SEnd(X)$, up to finite-index.
Note that $\SEnd(X)$ is usually a huge infinite set; and also the image $h_*(R)$ of a $K_X$-negative extremal
ray $R$ in the closed effective $1$-cycle cone $\NE(X)$ under a map $h \in \SEnd(X)$, may not be
$K_X$-negative anymore. So we have to deal with general (not necessarily $K_X$-negative) contractible
extremal rays $R$ of $\NE(X)$ in the sense of Definition \ref{def:extrem_ray}.
Our Theorem \ref{main-thm-finite-R} below asserts the finiteness of these rays $R$,
without even assuming $K_X$ being $\Q$-Cartier.

\begin{theorem}\label{main-thm-finite-R}(cf.~Theorems \ref{prop-finite-cont-r} and \ref{thm-GMMP})
Let $X$ be a (not necessarily normal or $\Q$-Gorenstein) projective variety with a polarized (or int-amplified) endomorphism. Then:
\begin{itemize}
\item[(1)]
$X$ has only finitely many (not necessarily $K_X$-negative) contractible extremal rays in the sense of Definition \ref{def:extrem_ray}.
\item[(2)]
Suppose $X$ is $\Q$-factorial normal. Then any finite sequence of MMP starting from $X$ is $G$-equivariant for some finite-index submonoid $G$ of $\SEnd(X)$.
\end{itemize}
\end{theorem}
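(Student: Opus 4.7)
The plan is to exploit the spectral behavior of the pullback action of an int-amplified endomorphism on the N\'eron--Severi space. The crucial technical input I would establish first is the dynamical characterization: an endomorphism $f \in \SEnd(X)$ is int-amplified if and only if every complex eigenvalue of $f^{*} : N^{1}(X)_{\R} \to N^{1}(X)_{\R}$ has modulus strictly greater than $1$. This should follow from a Perron--Frobenius type argument on the nef cone, beginning with the ampleness of $f^{*}L - L$ and iterating. Dually, $f_{*}$ on $N_{1}(X)_{\R}$ becomes a strict contraction after suitable rescaling. Since $f$ is finite surjective, $f^{*}$ preserves $\Nef(X)$ and $f_{*}$ preserves $\NE(X)$, so $f_{*}$ maps the set $\SR$ of contractible extremal rays of $\NE(X)$ into itself.

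For part (1), fix an ample class $H$ and normalize a generator $v_{R}$ of each $R \in \SR$ by $H \cdot v_{R} = 1$, producing a subset of the compact slice $\{v \in \NE(X) : H \cdot v = 1\}$. If $\SR$ were infinite, some subsequence of $\{v_{R}\}$ would accumulate at a point $v_{\infty}$. Each contractible extremal ray $R$ encodes a contraction $\pi_{R} : X \to Y_{R}$ whose data (exceptional locus, Stein factorization type, pullback face of $\Nef(X)$) is combinatorially discrete. Iterating $f_{*}$ and using its strict contraction property, together with the fact that $f_{*}$ permutes $\SR$ discretely, would force $v_{\infty}$ to lie on an extremal ray coinciding with infinitely many distinct $R$, contradicting extremality. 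This rules out accumulation and forces $|\SR| < \infty$.

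For part (2), treat first a single MMP step $\pi : (X, \Delta) \ratmap (X', \Delta')$ contracting a $(K_{X}+\Delta)$-negative extremal ray $R$. By part (1), $\SEnd(X)$ acts on the finite set $\SR$ via a homomorphism $\rho : \SEnd(X) \to \Sym(\SR)$, and $G_{1} := \Ker(\rho)$ is a finite-index submonoid stabilizing $R$ in the sense of the definition in the introduction. For each $g \in G_{1}$, the equality $g_{*}R = R$ lets one descend $g$ to an endomorphism $g'$ of $X'$: for divisorial and Fano-type contractions this follows from the universal property of the contraction of $R$; for flips one uses uniqueness of the flip to transport $g$ through the small contraction $X \to Y$ across to $X^{+} = X'$. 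The descended action on $N^{1}(X')_{\R}$ corresponds to restricting $g^{*}$ to the pullback subspace, which inherits the modulus-greater-than-$1$ eigenvalue property, so $g'$ is again int-amplified on $X'$ and part (1) applies there. Iterating over the given finite MMP sequence and intersecting the resulting finite-index submonoids yields the desired $G$.

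The main obstacle is the descent across a flipping contraction, where the target $Y$ of the small contraction $X \to Y$ is not itself an MMP model: showing that every $g \in G_{1}$ lifts compatibly to the flipped variety $X^{+}$ requires the uniqueness of the flip together with careful tracking of how $g$ permutes extremal rays on both sides of the small contraction. A secondary subtlety is verifying that the descended monoid on each intermediate MMP model remains int-amplified, which amounts to checking that the spectral condition on $f^{*}$ passes to the induced action on the quotient N\'eron--Severi lattice; this should follow formally from the contraction being relative over the base.
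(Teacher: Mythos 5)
The core of part (1) in your sketch is the compactness/accumulation argument: normalize ray generators on the slice $\{v\in\NE(X): H\cdot v=1\}$, assume an accumulation point $v_\infty$, and derive a contradiction via "strict contraction" of $f_*$ and "discreteness" of the combinatorial data attached to each contraction. This is where there is a genuine gap. Extremal rays of $\NE(X)$ \emph{can} accumulate without any two of them coinciding (nothing in extremality prevents a sequence of distinct extremal rays from converging to another extremal ray, or to a non-extremal boundary point), so the conclusion "force $v_\infty$ to lie on an extremal ray coinciding with infinitely many distinct $R$" does not follow from what you wrote. The rescaled $f_*$ being a contraction pulls the \emph{whole} slice toward the dominant eigendirection; it does not separate the normalized generators $v_R$ from one another, and $f_*$ permuting $\SR$ gives no lower bound on the pairwise distance of those generators. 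In short, you have not used int-amplifiedness in any way that actually bounds the number of rays.

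The paper's argument supplies precisely the finiteness that "combinatorially discrete" is silently assuming, and it goes through a different mechanism entirely. First one shows (Lemma \ref{lem-sigma-per}) that for any contractible ray $R_C$ the union $\Sigma_C$ of curves in $R_C$ is a Zariski closed, $f^{-1}$-periodic subset of $X$, and indeed $h^{-1}$-periodic for every $h\in\SEnd(X)$; this rests on Lemma \ref{lem-finite-orbit} together with the fact that an $f^{-1}$-periodic proper subvariety must eventually land inside $\Sing(X)\cup\Sigma_f$ (Lemma \ref{singrf}), enabling an induction on dimension that yields Proposition \ref{prop-finiteclosed}: there are only finitely many $f^{-1}$-periodic closed subsets at all. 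This uses the ramification divisor and the cycle-theoretic Lemma \ref{lem-cyc}, not spectral data on $N^1$. Second, for a \emph{fixed} closed set $E$, the number of contractible rays $R_C$ with $\Sigma_C=E$ is bounded by $\dim E$ (Lemma \ref{lem-finite-cont-r}), via the observation that each such ray determines a linear subspace $L_C=j^*\pi_C^*\NS_{\C}(Y_C)$ sitting as an irreducible component of the hypersurface $\{D|_F: (D|_F)^{\dim F}=0\}$ in $\NS_\C(X)|_F$. Neither of these two steps has a counterpart in your sketch, and without them I do not see how to rule out accumulation. (Note also that the theorem does not assume $X$ normal; the paper passes to the normalization before running the periodic-subvariety argument.)

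Your part (2) is essentially the right skeleton and matches the paper's strategy (induct on the length of the MMP, let $G$ be a finite-index submonoid acting trivially on the finite set of rays, descend through divisorial/Fano contractions by universal property and through flips by uniqueness). One small inaccuracy: you say the descended $g'$ is int-amplified because the spectral condition "passes to the induced action on the quotient N\'eron--Severi lattice." For a divisorial or Fano contraction $\pi: X\to X'$ the relevant space $\pi^*\NS_\Q(X')$ is a \emph{subspace} of $\NS_\Q(X)$, not a quotient, and the paper in fact cites a separate result (\cite[Theorem 8.2]{Meng}) to guarantee that a positive power of $f$ descends int-amplified along the whole MMP, rather than re-deriving it spectrally at each step. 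But these are secondary; the real missing ingredient is the periodic-subvariety finiteness underlying part (1).
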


We extend the results in \cite{MZ} and \cite{Meng}
about equivariant MMP from being relative to a single polarized or int-amplified
endomorphism to the whole $\SEnd(X)$ up to finite-index.
When $X$ is a point, every endomorphism of $X$ is regarded as being polarized.
A normal projective variety $X$ is said to be {\it $Q$-abelian} if there is a finite surjective morphism $\pi:A\to X$ \'etale in codimension $1$ with $A$ being an abelian variety.

\begin{theorem}\label{main-thm-GMMP}
Let $f:X\to X$ be an int-amplified endomorphism of a $\mathbb{Q}$-factorial klt projective variety $X$.
Then there exist a finite-index submonoid $G$ of $\SEnd(X)$,
a $Q$-abelian variety $Y$, and a $G$-equivariant (cf.~ Definition \ref{def-G-equiv}) relative MMP over $Y$
$$X=X_0 \dashrightarrow \cdots \dashrightarrow X_i \dashrightarrow \cdots \dashrightarrow X_r=Y$$
(i.e. $g \in G = G_0$ descends to $g_i \in G_i$ on each $X_i$), with every $X_i \dashrightarrow X_{i+1}$
a divisorial contraction, a flip or a Fano contraction, of a $K_{X_i}$-negative extremal ray, such that:
\begin{itemize}
\item[(1)]
There is a finite quasi-\'etale Galois cover $A \to Y$ from an abelian variety $A$
such that $G_Y := G_r$ lifts to a submonoid $G_A$ of $\SEnd(A) \le \End_{\variety}(A)$.
\item[(2)]
If $g$ in $G$ is polarized (resp. int-amplified), then so are
its descending $g_i$ on $X_i$ and the lifting to $A$ of $g_r$ on $X_r = Y$.
\item[(3)]
If $g$ in $G$ is amplified and
its descending $g_i$ on $X_i$ is int-amplified for some $i$, then $g$ is int-amplified.
\item[(4)]
If $K_X$ is pseudo-effective, then $X=Y$ and it is $Q$-abelian.
\item[(5)]
If $K_X$ is not pseudo-effective, then for each $i$, $X_i\to Y$ is equi-dimensional holomorphic with every fibre (irreducible) rationally connected.
The $X_{r-1}\to X_r = Y$ is a Fano contraction.
\item[(6)]
For any subset $H \subseteq G$ and its descending $H_Y \subseteq \SEnd(Y)$,
$H$ acts via pullback on $\NS_{\Q}(X)$ or $\NS_{\C}(X)$ as commutative diagonal matrices with respect
to a suitable basis if and only if so does $H_Y$.
\end{itemize}
\end{theorem}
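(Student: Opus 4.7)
The plan is to bootstrap from the equivariant MMP relative to a single int-amplified endomorphism established in \cite{MZ} and \cite{Meng} to a full $\SEnd(X)$-equivariant version by means of Theorem \ref{main-thm-finite-R}(2). Apply the main result of \cite{Meng} to $f$: this yields a finite $f$-equivariant relative MMP
$$X=X_0 \dasharrow \cdots \dasharrow X_r=Y$$
over a $Q$-abelian $Y$, with each step a $K_{X_i}$-negative divisorial contraction, flip, or Fano contraction. Since the sequence is \emph{finite}, Theorem \ref{main-thm-finite-R}(2) upgrades $f$-equivariance to $G$-equivariance for some finite-index submonoid $G \le \SEnd(X)$, producing descending submonoids $G_i \le \SEnd(X_i)$ at each stage. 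All later items are obtained by analyzing $G_r = G_Y$ on $Y$ together with its lift to the abelian cover.

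For (1), fix a quasi-\'etale Galois cover $\pi \colon A \to Y$ with $A$ abelian. After replacing $G$ by a further finite-index submonoid (harmless for the conclusion), each $g_Y \in G_Y$ lifts to $\tilde{g}_Y \in \SEnd(A)$: one takes a suitable component of the normalization of $A \times_{Y,g_Y} Y$, which is a quasi-\'etale cover of $A$ hence again abelian, and the Galois/universality property produces a self-map of $A$. Item (2) then follows by tracking ample classes: pullback along $\pi$ preserves ampleness, and along each MMP step the $G_i$-equivariance combined with standard Mori-theoretic pullback properties force polarization and int-amplification to be preserved. For (3), translate to numerical data on $\NS_{\R}$: $g$ being amplified means no eigenvalue of $g^*$ on $\NS_{\R}(X)$ has modulus $1$; if $g_i$ on some $X_i$ is int-amplified, all eigenvalues of $g_i^*$ on $\NS_{\R}(X_i)$ have modulus $>1$. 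The equivariance identifies $\NS_{\R}(X_i)$ with a $g^*$-stable subspace of $\NS_{\R}(X)$; combined with the amplified hypothesis excluding modulus $\le 1$ eigenvalues in the complementary directions (spanned by contracted or flipped classes), every eigenvalue of $g^*$ on $\NS_{\R}(X)$ has modulus $>1$, so $g$ is int-amplified.

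Items (4) and (5) concern the shape of the MMP. Pseudo-effectivity of $K_X$ obstructs any $K_X$-negative extremal ray, so the sequence is trivial and $X=Y$ is $Q$-abelian. Otherwise the MMP must terminate at a Fano contraction $X_{r-1}\to Y$; the equi-dimensionality of each $X_i \to Y$ and rational connectedness of the fibers then follow from the endomorphism-induced self-similarity (every fiber is carried to a fiber under a power of each $g_i$) together with the structure theory over a $Q$-abelian base developed in \cite{Meng}. For (6), the equivariance provides a chain of $G$-equivariant $\Q$-linear injections $\pi^*\NS_{\Q}(Y) \injmap \NS_{\Q}(X_i) \injmap \cdots \injmap \NS_{\Q}(X)$, with complementary directions spanned by $G$-stable sets of classes of contracted divisors and fiber classes. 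Diagonalizability with respect to a basis adapted to this $G$-stable decomposition transfers in both directions, giving the equivalence. The main technical obstacle throughout is keeping control of the \emph{cumulative} finite-index passages (from $f$-equivariance to $G$-equivariance, from $G_Y$ to the submonoid lifting to $A$, and through each MMP step), so that the final $G$ is still of finite index in $\SEnd(X)$; this is precisely where Theorem \ref{main-thm-finite-R}(2) is indispensable, since without it the passage from ``equivariant for some specific $f$'' to ``equivariant for a large submonoid of $\SEnd(X)$'' cannot be effected in a single stroke.
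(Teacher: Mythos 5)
Your overall architecture matches the paper's: use \cite{Meng} to get a finite $f$-equivariant relative MMP over a $Q$-abelian $Y$, then apply Theorem \ref{thm-GMMP} (which is what \ref{main-thm-finite-R}(2) specializes to) to upgrade $f$-equivariance to $G$-equivariance for a finite-index $G$; lift $G_Y$ to the abelian cover using quasi-\'etaleness; and transfer (2), (4), (5), (6) along the $G$-stable filtration $\pi^*\NS_{\Q}(Y)\subsetneq\cdots\subsetneq\NS_{\Q}(X)$. That part is correct and is the paper's route.

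There is, however, a genuine gap in your argument for item (3). You assert that $g$ being amplified means ``no eigenvalue of $g^*$ on $\NS_{\R}(X)$ has modulus $1$'' and, a line later, that amplified ``excludes modulus $\le 1$ eigenvalues in the complementary directions.'' Neither is a correct characterization. Amplified only requires that $H$ lie in the image of $g^*-\mathrm{id}$ on $\NS(X)$ for some ample $H$; since a proper linear subspace of $\NS_{\R}(X)$ can easily meet the (open) ample cone, this does \emph{not} force $g^*-\mathrm{id}$ to be injective, i.e., it does not even rule out the eigenvalue $1$, let alone eigenvalues of modulus $\le 1$. Your second formulation (``modulus $\le 1$'') would make amplified \emph{equivalent} to int-amplified by \cite[Proposition 3.3]{Meng}, which is false in general and would render the paper's Theorem \ref{thm-auto}(2) (amplified $\Rightarrow$ int-amplified, a nontrivial assertion) vacuous. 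Concretely, the quotients $\NS_{\Q}(X_j)/\NS_{\Q}(X_{j+1})$ carry eigenvalues that are positive integers $q_j\ge 1$ by Lemma \ref{lem-rel-int}, and the amplified hypothesis on $g$ gives you no purely numerical handle to rule out $q_j=1$ for a Fano contraction $X_j\to X_{j+1}$ that precedes the index $i$ where you know int-amplification. This is exactly why the paper does not argue numerically here but instead proves Lemma \ref{lem-amp-iamp}: when a Fano fibre direction has eigenvalue $1$, one produces a $g$-fixed $\pi_j$-ample class, forces $g_j$ restricted to a general $g_j$-periodic fibre into $\Aut_0$ of that fibre, and then contradicts Fakhruddin's density of periodic points via Lemma \ref{lem-amp-aut}. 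The paper even flags, just before Lemma \ref{lem-amp-aut}, that it is unknown whether ``amplified'' descends equivariantly --- a remark that should alert you that the clean numerical transfer you are invoking is not available. To repair (3) you would need to replace the eigenvalue bookkeeping by the dynamical argument of Lemma \ref{lem-amp-iamp} (or prove the Krieger--Reschke-style numerical characterization you are implicitly relying on, which is not in the paper and I believe is false).

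The remaining items are handled essentially as the paper does, modulo brevity: (1) is the standard lifting of a quasi-\'etale self-map of $Y$ along the Galois cover $A\to Y$ (cited to \cite{Na-Zh} or \cite{CMZ}); (2) is the descent of polarized/int-amplified along MMP steps (\cite{MZ},\cite{Meng}); (4)--(5) are quoted from \cite[Theorem 1.10]{Meng}; and (6) is the transfer of simultaneous diagonalizability through the filtration. For (6) you should note that the genuinely delicate step is the Fano contraction, where the paper's Lemma \ref{lem-G-diag} does real work with the projection formula to kill the off-diagonal term; your description (``diagonalizability transfers in both directions'') is accurate as a summary but glosses over this, and for the $\Q$-version one also needs rationality of the new eigenvalues, which comes from Lemma \ref{lem-rel-int}.
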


Let $\Pol(X)$ be the set of {\it all polarized endomorphisms} on $X$, and
let $\IAmp(X)$ be the set of {\it all int-amplified endomorphisms} on $X$.
In general, they are not semigroups, i.e., they may not be closed under composition; see Example \ref{ex:power_eq_map}.
When $X$ is rationally connected and smooth,
Theorem \ref{main-thm-rc} below gives the assertion that if $g$ and $h$ are in $\Pol(X)$ (resp.~$\IAmp(X)$)
then $g^M \circ h^M$ remains in $\Pol(X)$ (resp.~ $\IAmp(X)$) for some $M>0$ depending only on $X$.
For general $X$, Corollary \ref{main-cor-comp} says that the same assertion on $X$
is reduced to that on
the base of the end product $Y$ of the MMP starting from $X$, or
the quasi-\'etale abelian variety cover $A$ of $Y$.

\begin{corollary}\label{main-cor-comp}
We use the notation and assumption in Theorem \ref{main-thm-GMMP}. For $g, h$ in $G \subseteq \SEnd(X)$,
let $\tau = g \circ h$, $\tau_Y = g_Y \circ h_Y$ the descending to $Y$
and $\tau_A = g_A \circ h_A$ the lifting to $A$. Then we have:
\begin{itemize}

\item[(I)]
Suppose both $g^*, h^*$ are diagonalizable on $\NS_{\C}(X)$
(resp.~  both $g^*, h^*$ are diagonalizable on $\NS_{\Q}(X)$; both $g, h$ are in $\Pol(X)$; one of $g, h$ is in $\IAmp(X)$). Then
{\rm (Ia)} and {\rm (Ib)} below are equivalent.

\item[(Ia)]
$\tau^*$ is diagonalizable on $\NS_{\C}(X)$
(resp. $\tau^*$ is diagonalizable on $\NS_{\Q}(X)$; $\tau \in \Pol(X)$;  $\tau \in \IAmp(X)$).

\item[(Ib)]
$\tau_Y^*$ is diagonalizable on $\NS_{\C}(Y)$
(resp. $\tau_Y^*$ is diagonalizable on $\NS_{\Q}(Y)$; $\tau_Y \in \Pol(Y)$; $\tau_Y \in \IAmp(Y)$).

\item[(II)]
$\tau_Y$ is in $\Pol(Y)$ (resp. $\IAmp(Y)$) if and only if
$\tau_A$ is in $\Pol(A)$ (resp. $\IAmp(A)$).

\item[(III)]
Suppose that both $g^*$ and $h^*$ are diagonalizable on $\NS_{\C}(X)$.
Then $g^* h^* = h^* g^*$ on $\NS_{\C}(X)$ if and only if $g_Y^* h_Y^* = h_Y^* g_Y^*$ on $\NS_{\C}(Y)$.
\end{itemize}
\end{corollary}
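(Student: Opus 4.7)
My plan is to decompose each of (I), (II), (III) along the $G$-equivariant relative MMP $X = X_0 \dashrightarrow \cdots \dashrightarrow X_r = Y$ and the abelian cover $\pi : A \to Y$ from Theorem \ref{main-thm-GMMP}, using parts (2), (3) and (6) of that theorem as the primary tools. Since $G$ is closed under composition, $\tau = g \circ h$ lies in $G$ and descends to $\tau_Y \in G_Y$, lifts to $\tau_A \in G_A$; so the directions (Ia)$\Rightarrow$(Ib) for the polarized and int-amplified cases follow at once from Theorem \ref{main-thm-GMMP}(2), and similarly one half of (II).

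For the converse (Ib)$\Rightarrow$(Ia), I would induct along the MMP. Each divisorial step $X_i \to X_{i+1}$ enlarges $\NS_\C$ by a one-dimensional summand spanned by the exceptional divisor $E_i$; each flip leaves $\NS_\C$ unchanged; and the Fano contraction $X_{r-1} \to Y$ adds a vertical summand. Passing to a further finite-index submonoid of $G$, I arrange that $g^*$ and $h^*$ preserve this decomposition. The hypothesis on $g, h$ then pins down the eigenvalues on the extra summands: for $g, h \in \Pol(X)$ these are the common polarization scalar; for one of $g, h$ in $\IAmp(X)$ they have modulus $>1$; and under the diagonalizability hypothesis, $g^*$ and $h^*$ act diagonally on the one-dimensional exceptional summands and, after a suitable choice of basis, on the Fano-vertical summand. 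Combining with the spectral characterization of int-amplified and polarized endomorphisms from \cite{Meng} (all eigenvalues of modulus $>1$, resp.~all eigenvalues equal to the polarization scalar), I deduce that the property for $\tau_Y^*$ on $\NS_\C(Y)$ forces the analogous property for $\tau^*$ on $\NS_\C(X)$.

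For (II), the forward direction is immediate from $\pi$ being finite: $\pi^*$ preserves ampleness and $\tau_A^*(\pi^* L_Y) = \pi^*(\tau_Y^* L_Y) \sim q \pi^* L_Y$. For the converse, I take an ample $L_A$ on $A$ with $\tau_A^* L_A \sim q L_A$ and average over $\Gal(A/Y)$ to obtain a Galois-invariant ample class that descends numerically to $Y$; the quasi-\'etale hypothesis promotes numerical to linear equivalence up to a uniform multiple, as in \cite{MZ}. The same averaging handles the int-amplified case. For (III), I apply Theorem \ref{main-thm-GMMP}(6) with $H = \{g, h\}$: under the diagonalizability of each of $g^*$ and $h^*$ on $\NS_\C(X)$, the relation $g^* h^* = h^* g^*$ is equivalent to simultaneous diagonalizability, i.e.~to $H$ acting as commutative diagonal matrices; applying (6) to $\{g\}$, $\{h\}$ and $\{g, h\}$ separately shows the corresponding equivalence for $H_Y$, giving (III).

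The hard part will be the converse in (I): one must recover ampleness or the full diagonalizability of $\tau^*$ on $\NS_\C(X)$ from data that a priori lives only on $Y$, and this recovery depends on extracting precise spectral information about the MMP-exceptional and Fano-vertical summands from the individual hypotheses placed on $g$ and $h$. The other two assertions (II) and (III) are essentially bookkeeping once Theorem \ref{main-thm-GMMP} is in hand.
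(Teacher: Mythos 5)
Your overall strategy — working along the $G$-equivariant MMP filtration $\NS_{\Q}(Y) \subseteq \cdots \subseteq \NS_{\Q}(X)$, using Lemma~\ref{lem-rel-int} to control the scalar action on each quotient $\NS_{\Q}(X_i)/\NS_{\Q}(X_{i+1})$, invoking Lemma~\ref{lem-g-diag} for the diagonalizability direction, and reading off (II) from finiteness of $A\to Y$ and (III) from Theorem~\ref{main-thm-GMMP}(6) applied to $H=\{g,h\}$ — is essentially the paper's approach.

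Two points need fixing, however. First, the proposed move ``pass to a further finite-index submonoid of $G$ so that $g^*$ and $h^*$ preserve this decomposition'' does not make sense here: $g,h$ are given fixed elements of $G$, and shrinking $G$ could remove them; more to the point it is also unnecessary, since the MMP is already $G$-equivariant so every $g\in G$ automatically preserves the filtration by $\NS_{\Q}(X_{i+1})\subseteq\NS_{\Q}(X_i)$. You should work with this filtration and the induced maps on successive quotients (as the paper does), not try to produce an honest direct-sum decomposition simultaneously invariant under $g^*$ and $h^*$. Second, and more substantively, the converse (Ib)$\Rightarrow$(Ia) for the \emph{polarized} case needs a step you have not supplied: with $g$ $q_g$-polarized, $h$ $q_h$-polarized, and $\tau_Y$ $q$-polarized, the eigenvalues of $\tau^*$ on the exceptional quotients are all $q_g q_h$ (by Lemma~\ref{lem-rel-int}), while those coming from $\NS_{\C}(Y)$ have modulus $q$. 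For $\tau$ to be polarized you must show these agree, i.e.\ $q = q_g q_h$ when $\dim Y > 0$; the paper extracts this from the identity $\deg\tau_Y = \deg g_Y \cdot \deg h_Y$, i.e.\ $q^{\dim Y} = q_g^{\dim Y}q_h^{\dim Y}$, combined with \cite[Lemma 2.1]{Na-Zh}. Without this degree count, the spectral characterization of polarized endomorphisms (\cite[Proposition 2.9]{MZ}, \cite[Lemma 2.3]{Na-Zh}) cannot be applied, and your ``combining with the spectral characterization'' is a genuine gap rather than bookkeeping. The int-amplified case you sketch does go through as you describe, since there one only needs all eigenvalues to have modulus $>1$ and the quotient eigenvalues are automatically $\ge 1$ for $h\in\SEnd(X)$, strictly $>1$ once one of $g,h$ is int-amplified.
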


By the results in \cite{MZ} and \cite{Meng}, we know that the building blocks of polarized (or more generally int-amplified) endomorphisms are those on
Abelian varieties and rationally connected varieties.
Indeed, if $X$ has mild singularities, is non-uniruled and admits a polarized (resp.~int-amplified) endomorphism, then
$X$ is a $Q$-abelian variety: there is a finite Galois cover $A \to X$ \'etale in codimension one
such that $f$ lifts to a polarized (resp.~int-amplified) endomorphism on the abelian variety $A$;
if $X$ is uniruled, then a polarized (resp. int-amplified) $f$ descends to a polarized (resp. int-amplified) endomorphism on the base $Y$
of a special maximal rationally connected fibration $X \dasharrow Y$, and $Y$ is non-uniruled,  hence
it is a $Q$-abelian variety; see \cite[Proposition 1.6]{MZ}, \cite[Corollary 4.20]{Na10}.
Therefore, the essential building blocks we have to study are those polarized (resp. int-amplifed) endomorphisms
on rationally connected varieties.

Our next Theorem \ref{main-thm-rc} gives the structure of the monoid $\SEnd(X)$ for a
rationally connected $X$.
The second assertion below says that the surjective endomorphisms on a rationally connected variety
admitting a polarized (or int-amplifed) endomorphism, act as diagonal (and hence commutative)
matrices on the Neron-Severi group,
up to finite-index.

Though $\Pol(X)$ and $\IAmp(X)$ may not be subsemigroups of $\SEnd(X)$,
the third and fourth assertions below say that they are semigroups ``up to finite-index";
it also answers affirmatively \cite[Question 4.15]{YZ}, ``up to finite-index", when $X$ is rationally connected and smooth.
By Example \ref{ex:power_eq_map}, this extra ``up to finite-index" assumption
is necessary.

The fourth assertion below also says that the pullback action of $\SEnd(X)$ on $\NS_{\Q}(X)$ is determined by
that of $\IAmp(X)$, up to finite-index ({\it hence the importance of studying int-amplified endomorphisms}).
For a subset $S$ of a semigroup $H$ and an integer $M \ge 1$, denote by
$\langle S^{[M]} \rangle :=
\{s_1^M \cdots s_r^M \, | \, r \ge 1, \, s_i \in S\}$.

\begin{theorem}\label{main-thm-rc} (cf.~Theorem \ref{thm-q-pi-pt})
Let $X$ be a rationally connected smooth projective variety admitting
a polarized (or int-amplified) endomorphism $f$.
We use the notation $X=X_0 \dashrightarrow \cdots \dashrightarrow X_r=Y$ and the finite-index submonoid $G \le \SEnd(X)$ as in Theorem \ref{main-thm-GMMP}.
Then there is an integer $M \ge 1$ depending only on $X$ such that:
\begin{itemize}
\item[(1)] The $Y$ in Theorem \ref{main-thm-GMMP} is a point.
\item[(2)]
$G^*|_{\NS_{\Q}(X)}$ is a commutative diagonal
monoid with respect to a suitable $\Q$-basis $B$ of $\NS_{\Q}(X)$.
Further, for every $g$ in $G$,
the representation matrix $[g^*|_{\NS_{\Q}(X)}]_B$ relative to $B$,
is equal to $\diag[q_1, q_2, \dots]$ with integers $q_i \ge 1$.
\item[(3)]
$G \cap \Pol(X)$ is a subsemigroup of $G$, and consists exactly of those $g$ in $G$
such that $[g^*|_{\NS_{\Q}(X)}]_B = \diag[q, \dots, q]$ for some integer $q \ge 2$.
Further,
$$G \cap \Pol(X) \supseteq \langle \Pol(X)^{[M]} \rangle .$$
\item[(4)]
$G \cap \IAmp(X)$ is a subsemigroup of $G$, and consists exactly of those $g$ in $G$
such that $[g^*|_{\NS_{\Q}(X)}]_B = \diag[q_1, q_2, \dots]$ with integers $q_i \ge 2$.
Further,
$$G \, (G \cap \IAmp(X)) = G \cap \IAmp(X)  \supseteq  \langle \IAmp(X)^{[M]} \rangle ;$$
any $h$ in $\SEnd(X)$ has $(h^M)^* = (g_1^*)^{-1} g_2^*$ on $\NS_{\Q}(X)$
for some $g_i$ in $G \cap \IAmp(X)$.
\item[(5)]
We have $h^M\in G$ and that $h^*|_{\NS_{\C}(X)}$ is diagonalizable for every $h \in \SEnd(X)$.
\end{itemize}
\end{theorem}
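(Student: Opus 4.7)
The plan is to deduce all five parts from Theorem \ref{main-thm-GMMP}, especially its part (6), together with routine bookkeeping using the finite-index structure of $G$. For (1), every step of the $G$-equivariant MMP $X = X_0 \dashrightarrow \cdots \dashrightarrow X_r = Y$ is either birational or a Fano contraction, so $Y$ is the dominant rational image of the rationally connected $X$ and therefore itself rationally connected. By Theorem \ref{main-thm-GMMP}(1), $Y$ is $Q$-abelian, covered by an abelian variety $A$ via a finite quasi-\'etale morphism; if $Y$ were positive-dimensional, nontrivial global holomorphic one-forms on $A$ would descend to the smooth locus of $Y$ and contradict rational connectedness, so $Y$ is a point. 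With $Y$ a point the condition on $H_Y$ in Theorem \ref{main-thm-GMMP}(6) is vacuous, so taking $H = G$ yields that $G$ acts on $\NS_{\Q}(X)$ as commuting diagonal matrices in some $\Q$-basis $B$; in this basis the diagonal entries are rational, and because each $g^*$ is represented by an integer matrix on $\NS(X)/\mathrm{torsion}$, they are algebraic integers and hence integers. Positivity and non-vanishing of these entries should follow from a Perron--Frobenius-type argument using preservation of the pseudo-effective cone by $g^*$ together with injectivity of $g^*$ (as $g$ is finite surjective), and I expect this positivity step to be the main technical obstacle; this establishes (2).

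Parts (3) and (4) then become formal consequences of the diagonal form. Writing $[g^*]_B = \diag[q_1(g), q_2(g),\dots]$ with positive integer $q_i(g)$, the equivalence $g \in \Pol(X)$ iff all $q_i(g)$ equal a common $q \ge 2$ follows by expanding an ample class $L = \sum c_i e_i$ and imposing $g^*L = qL$, using that ample classes span an open cone to force every coordinate $c_i$ to be nonzero for some choice of $L$. The equivalence $g \in \IAmp(X)$ iff every $q_i(g) \ge 2$ is the standard spectral characterization of int-amplification. Closure under composition then reduces to diagonal multiplication; the inclusions $\langle \Pol(X)^{[M]}\rangle \subseteq G \cap \Pol(X)$ and $\langle \IAmp(X)^{[M]}\rangle \subseteq G \cap \IAmp(X)$ follow once $M$ is chosen so that $h^M \in G$ for every $h \in \SEnd(X)$; and $G \cdot (G \cap \IAmp(X)) = G \cap \IAmp(X)$ is immediate from diagonal arithmetic since every $q_i(g) \ge 1$. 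The factorization $(h^M)^* = (g_1^*)^{-1} g_2^*$ is produced by taking $g_1$ a sufficiently high polarized power of the given $f$ so that $g_1^* = q^N \cdot \id$, and setting $g_2 := g_1 \circ h^M$; for $N$ large both lie in $G \cap \IAmp(X)$.

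For (5), the finite-index chain $G = G_0 \le \cdots \le G_r = \SEnd(X)$ with each $G_i/G_{i-1}$ embedded in a finite group of exponent $n_i$ yields, for any $h \in \SEnd(X)$, successively $h^{n_r} \in G_{r-1}$, $h^{n_r n_{r-1}} \in G_{r-2}$, and so on, so $h^M \in G$ for $M := \prod_i n_i$ depending only on $X$. Then $(h^*)^M = (h^M)^*$ is diagonalizable with nonzero integer eigenvalues by (2), so the minimal polynomial of $h^*$ divides $\prod_i (T^M - \lambda_i)$, a polynomial with pairwise distinct roots in $\C$; hence $h^*$ is diagonalizable on $\NS_{\C}(X)$.
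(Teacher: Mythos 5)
Your overall strategy — get $Y$ to be a point, invoke Theorem~\ref{main-thm-GMMP}(6) for simultaneous diagonalization, and then do diagonal arithmetic for parts (3)--(5) — matches the paper's, and parts (3), (4), (5) of your argument are essentially correct (modulo the caveat below, and modulo a small adjustment: for the factorization $(h^M)^*=(g_1^*)^{-1}g_2^*$, you should take $g_1=f^M$ rather than insist that $g_1^*$ be a scalar, since the given $f$ may be int-amplified but not polarized). However, there are two genuine gaps.

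First, your argument for (1) is not correct as written. You claim that holomorphic one-forms on the abelian cover $A$ descend to $Y_{\reg}$; they do not, because only $\Gamma$-invariant one-forms descend along the quotient $A\to Y=A/\Gamma$, and there is in general no nonzero invariant. (For instance $\Gamma=\Z/2$ acting by $(-1,-1)$ on $A=E\times E$ is quasi-\'etale, yet $H^0(A,\Omega^1_A)^{\Gamma}=0$.) The conclusion that $Y$ is a point is true, but a correct route needs either the paper's technical Lemma~11.1 of [CMZ] together with the proof of Lemma~9.1 of [MZ] — which transfer the hypothesis ($q^{\natural}(X)=0$ or $\pi_1^{\alg}(X_{\reg})$ finite) from $X$ to $Y$ through the equi-dimensional fibration with rationally connected fibres — or, alternatively, an argument via $K_Y\sim_{\Q}0$ versus uniruledness of $Y$, which is not an immediate consequence of ``one-forms contradict rational connectedness.''

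Second, the positivity of the diagonal entries $q_i$ in (2) is a genuine gap, as you yourself flag. A Perron--Frobenius--type argument using only that the diagonalizable $g^*$ preserves a salient full-dimensional cone does not force all diagonal entries to be positive: for example, $\diag[2,-1]$ preserves the salient cone $\{(x,y):x\ge 0,\ |y|\le x\}$ in $\R^2$ and is injective. The paper closes this gap by a completely different mechanism: it observes that every eigenvalue of $g^*|_{\NS_{\Q}(X)}$ arises as an eigenvalue of $g_j^*$ on some quotient $\NS_{\Q}(X_j)/\NS_{\Q}(X_{j+1})$ attached to a step of the MMP, and then applies Lemma~\ref{lem-rel-int} (for a Fano contraction) or the ramification-index computation $g^*E=aE$ with $a\in\Z_{>0}$ (for a divisorial contraction) to conclude each such eigenvalue is a positive integer. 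Since parts (3) and (4) need $q_i\ge 1$, this gap propagates; your characterizations of $\Pol$ and $\IAmp$ are otherwise correct once positivity is secured.
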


Let $\Aut(X)$ be the group of all automorphisms of $X$, and $\Aut_0(X)$ its neutral connected component.
By applying Theorem \ref{main-thm-rc}, we have the following result.

\begin{theorem}\label{main-thm-auto}(cf.~Theorem \ref{thm-auto})
Let $X$ be a rationally connected smooth projective variety admitting a polarized (or int-amplified) endomorphism.
Then we have:
\begin{itemize}
\item[(1)]
$\Aut(X)/\Aut_0(X)$ is a finite group. More precisely, $\Aut(X)$ is a linear algebraic group
(with only finitely many connected components).
\item[(2)]
Every amplified endomorphism of $X$ is int-amplified.
\item[(3)]
$X$ has no automorphism of positive entropy (nor amplified automorphism).
\end{itemize}
\end{theorem}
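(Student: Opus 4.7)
The plan is to deduce all three parts of Theorem \ref{main-thm-auto} from Theorem \ref{main-thm-rc}. The crucial input, applied repeatedly, is that for every $h \in \SEnd(X)$ the iterate $h^M$ lies in the finite-index submonoid $G$ and satisfies $(h^M)^* = \diag[q_1, q_2, \ldots]$ in the fixed basis $B$ of $\NS_\Q(X)$, with integer entries $q_i \ge 1$.

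I start with (2). Given $h \in \SEnd(X)$ amplified, let $A := h^* L - L$ be ample for some Cartier $L$. The telescoping identity
\[
(h^M)^* L - L \;=\; \sum_{j=0}^{M-1} (h^j)^* A
\]
presents $(h^M)^* L - L$ as the sum of the ample class $A$ (the $j=0$ term) and $M-1$ nef classes (each $(h^j)^* A$ is the pullback of an ample class by a surjective morphism, hence nef), so $(h^M)^* L - L$ is ample and $h^M$ is itself amplified. Since $h^M \in G$, one has $(h^M)^* = \diag[q_i]$ with $q_i \in \Z_{\ge 1}$; the Krieger--Reschke criterion (cf.\ \cite{Kr}) says $1$ is not an eigenvalue of the pullback of an amplified endomorphism on $\NS_\R$, so every $q_i \ne 1$, i.e.\ $q_i \ge 2$, and Theorem \ref{main-thm-rc}(4) then upgrades $h^M$ to int-amplified. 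Int-amplifiedness transfers from $h^M$ back to $h$, because the eigenvalues of $h^*$ on $\NS_\R(X)$ exceed $1$ in modulus iff those of $(h^*)^M$ do.

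For (1), any $h \in \Aut(X)$ gives $(h^M)^* = \diag[q_i]$ with $q_i \in \Z_{\ge 1}$, and $h^M$ being an automorphism forces its action on the lattice $\NS(X)/\mathrm{torsion}$ to have determinant $\pm 1$, i.e.\ $\prod q_i = 1$, so every $q_i = 1$ and $(h^M)^* = \id$ on $\NS_\Q(X)$. Hence the representation $\Aut(X) \to \GL(\NS_\Q(X))$ has image of exponent dividing $M$, and the Burnside theorem on linear groups of bounded exponent makes this image finite. Rational connectedness gives $H^{p,0}(X) = 0$ for $p>0$, whence $H^2(X, \Q) = \NS_\Q(X)$; Lieberman's theorem then shows that the kernel of $\Aut(X) \to \GL(H^2(X, \Q))$ is a finite extension of $\Aut_0(X)$, so $\Aut(X)/\Aut_0(X)$ is finite. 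Rational connectedness also forces $\Alb(X) = 0$, and Chevalley's structure theorem then makes $\Aut_0(X)$, which contains no positive-dimensional abelian subvariety, a linear algebraic group; so $\Aut(X)$ is itself a linear algebraic group with finitely many connected components.

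For (3), the argument of (1) applies to any $h \in \Aut(X)$ to give $(h^M)^* = \id$ on $H^2(X, \Q)$, so $h^{MN} \in \Aut_0(X)$ for some $N \ge 1$ by Lieberman; since $\Aut_0(X)$ is connected and acts on the discrete lattice $H^*(X, \Z)/\mathrm{torsion}$, this action is trivial, so $h^{MN}$ acts trivially on $H^*(X, \C)$, making the spectral radius of $h^*$ on $H^*(X, \C)$ equal to $1$. Gromov--Yomdin then yields zero topological entropy. For the amplified-automorphism claim, if such an $h$ existed, part (2) would make it int-amplified, forcing every eigenvalue of $h^*|_{\NS_\R(X)}$ to have modulus $>1$; but these eigenvalues are $MN$-th roots of unity of modulus $1$, a contradiction since $\NS_\R(X) \ne 0$ (as $X$ carries an ample divisor). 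The most delicate step, I expect, is the invocation in (2) of the Krieger--Reschke eigenvalue criterion combined with the telescoping identity: together they transfer amplifiedness from the geometric condition on $h$ to the diagonal-matrix condition on $h^M \in G$, which is the bridge that lets us extract int-amplifiedness and then close everything.
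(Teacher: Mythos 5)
Your proposal is correct and runs along the same backbone as the paper: pass to $h^M \in G$, exploit the diagonal form from Theorem~\ref{main-thm-rc}(2), then deduce (1), (3) from the observation that an automorphism forces the diagonal entries to multiply to $\pm 1$ (hence all equal $1$), followed by the Lieberman--Fujiki finiteness theorem and Matsumura's criterion for linearity via $\Alb(X)=0$. Your use of Burnside's theorem to pass from ``bounded exponent'' to ``finite image'' is a mild reformulation of what the paper packs into its citation of~\cite{MZ},~\cite{Li},~\cite{Fuj}, and both are fine. The one place where your route genuinely diverges is part (2): you invoke the Krieger--Reschke eigenvalue criterion (amplified $\Leftrightarrow$ $f^*-\id$ invertible on $\NS_\R$, so no diagonal entry of $(h^M)^*$ can be $1$) to push $h^M$ into $G\cap\IAmp(X)$ via Theorem~\ref{main-thm-rc}(4); the paper instead deduces (2) from Lemma~\ref{lem-amp-iamp} together with Theorem~\ref{main-thm-GMMP}(3), which is a geometric argument descending through the $G$-equivariant MMP, using Zariski density of periodic points of amplified maps (Fakhruddin) and the structure of fibres of the Fano contraction to rule out $f^*$-fixed relatively ample classes. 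Your linear-algebraic argument is shorter and perfectly valid given~\cite{Kr}; the paper's route is heavier but Lemma~\ref{lem-amp-iamp} is proved anyway because Theorem~\ref{main-thm-GMMP}(3) is stated without the rational-connectedness hypothesis, where the eigenvalue bookkeeping alone would not suffice.
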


\begin{remark}\label{rem-Intro1}
$ $
\begin{itemize}
\item[(1)]
The assumption of $X$ being rationally connected smooth in Theorems \ref{main-thm-rc} and \ref{main-thm-auto} can be weakened as in Theorems \ref{thm-q-pi-pt} and \ref{thm-auto}.
\item[(2)]
Let $X$ be a projective variety with $f\in \IAmp(X)$ and $g\in\SEnd(X)$.
Then both $f^i\circ g$ and $g\circ f^i$ are in $\IAmp(X)$ when $i \ge N$ for some $N > 0$;
see \cite[Proposition 1.4]{Meng}.
However, this $N$ may depend on $f$ and $g$.
\end{itemize}
\end{remark}

\begin{example}\label{ex:power_eq_map}
\rm{}
Let $X:=\mathbb{P}^1\times \mathbb{P}^1$.
We define endomorphisms $f, g$ on $X$ as:
$$\begin{aligned}
f([a_1:b_1],[a_2:b_2]) &=([a_2:b_2],[a_1^4:b_1^4]), \\
g([a_1:b_1],[a_2:b_2]) &=([a_2^4:b_2^4],[a_1:b_1]).
\end{aligned}$$
Denote by $h=g\circ f$.
Then $$h([a_1:b_1],[a_2:b_2])=([a_1^{16}:b_1^{16}],[a_2:b_2]).$$
Note that $f^2([a_1:b_1],[a_2:b_2])=g^2([a_1:b_1],[a_2:b_2])=([a_1^4:b_1^4],[a_2^4:b_2^4])$.
Clearly, $f$ and $g$ are then $2$-polarized, but $h$ is not int-amplified.
Note also that the set of preperiodic points of $f$ and $g$ are the same.
\end{example}

\par \vskip 1pc
{\bf The difference with early papers.}
In \cite{MZ} for polarized $f \in \SEnd(X)$ and \cite{Meng} for int-amplified $f$, it was
proved that the single $f$, replaced by a power, fixes a $K_X$-negative extremal ray.
In this paper, we prove that there are only finitely many (not necessarily $K_X$-negative) 
contractible extremal rays. This guarantees the MMP is $\SEnd(X)$-equivariant; 
and even the whole monoid $\SEnd(X)$ (all up to finite-index) is diagonalizable (and hence commutative) over $\NS_{\Q}(X)$
when $X$ is smooth rationally connected.

Even when $X$ has Picard number one,
the following question is still open when $n \ge 4$.

\begin{question}
Let $X$ be a rationally connected smooth projective variety of dimension $n \ge 1$ 
which admits a polarized endomorphism.
Is $X$ (close to be) a toric variety?
\end{question}

\par \vskip 1pc
{\bf Acknowledgement.}
The authors would like to thank the referee for very careful reading, constructive suggestions, and pointing out the necessity to add the normality assumption in Lemmas \ref{lem-omt} and \ref{lem-open}. 
The first named-author is supported by a Research Assistantship of NUS.
The second named-author is supported by an Academic Research Fund of NUS.

\section{Preliminaries}
Throughout this section, we work over an arbitrary algebraically closed field $k$.

\par \vskip 1pc
{\bf Terminology and notation.}
Let $X$ be a projective variety.
A Cartier divisor is always integral, unless otherwise indicated.

Let $n:=\dim(X)$.
We can regard $\N^1(X):=\NS(X)\otimes_{\Z}\R$ as the space of {\it numerically equivalent classes} of $\R$-Cartier divisors.
Denote by $\N_r(X)$ the space of {\it weakly numerically equivalent classes} of $r$-cycles with $\R$-coefficients (cf.~\cite[Definition 2.2]{MZ}).
Denote by $\NE(X)$ the cone of the closure of effective real $1$-cycles in $\N_1(X)$.
When $X$ is normal, we also call $\N_{n-1}(X)$ the space of {\it weakly numerically equivalent classes} of Weil $\R$-divisors.
In this case, $\N^1(X)$ can be regarded as a subspace of $\N_{n-1}(X)$ (cf.~\cite[Lemma 3.2]{Zh-tams}).
For $\K := \Q$, $\R$, or $\C$, denote by $\NS_{\K}(X):=\NS(X)\otimes_\Z \K$.

\begin{definition}\label{def-G-equiv}
Let
$$(*):X_1\dasharrow X_2\dasharrow\cdots\dasharrow X_r$$
be a finite sequence of dominant rational maps of projective varieties.
Let $f:X_1\to X_1$ be a surjective endomorphism.
We say the sequence $(*)$ is {\it $f$-equivariant} if the following diagram is commutative
$$\xymatrix{
X_1\ar@{-->}[r]\ar[d]^{f_1} &X_2\ar@{-->}[r]\ar[d]^{f_2} &\cdots\ar@{-->}[r] &X_r\ar[d]^{f_r}\\
X_1\ar@{-->}[r] &X_2\ar@{-->}[r] &\cdots\ar@{-->}[r] &X_r\\
}
$$
where $f_1=f$ and all $f_i$ are surjective endomorphisms.
Let $G$ be a subset of $\SEnd(X_1)$.
We say the sequence $(*)$ is {\it $G$-equivariant} if $(*)$ is $g$-equivariant for any $g\in G$.
\end{definition}

\begin{definition} Let $f:X\to X$ be a surjective endomorphism of a projective variety $X$.
\begin{itemize}
\item[(1)] $f$ is $q$-{\it polarized}  if $f^{\ast}L \sim qL$ for some ample Cartier divisor $L$ and integer $q>1$.
\item[(2)] $f$ is {\it amplified} if $f^*L-L=H$ for some Cartier divisor $L$ and ample
divisor $H$.
\item[(3)] $f$ is {\it int-amplified} if $f^*L-L=H$ for some ample Cartier divisors $L$ and $H$.
\item[(4)] $f$ is {\it separable} if the induced field extension $f^*:k(X)\to k(X)$ is separable where $k(X)$ is the function field of $X$.
\end{itemize}
\end{definition}

Let $f:X\to X$ be a surjective endomorphism of a projective variety $X$ of dimension $n \ge 1$.
Let $L$ be a Cartier divisor of $X$.
Then
$$(f^s)^*L-L=f^*L'-L'=\sum\limits_{i=0}^{s-1}(f^i)^*(f^*L-L)$$
where $L'=\sum\limits_{i=0}^{s-1}(f^i)^*L$.
Therefore, $f$ is amplified (resp.~int-amplified) if and only if so is $f^s$ for some (or all) $s>0$.
Suppose further $q := (\deg f)^{\frac{1}{n}}$ is rational (and hence an integer).
If $(f^s)^*L\sim q' L$ for some ample Cartier divisor $L$ and $q' >0$, then $q' = (\deg f^s)^{\frac{1}{n}} = q^s$ and $f^*L'' \sim q L''$ where $L'' =\sum\limits_{i=0}^{s-1} \,
q^{s-i}(f^i)^*L$.
Therefore, $f$ is polarized if and only if so is $f^s$ for some (or all) $s>0$.

\begin{definition}
Let $X$ be a projective variety.
\begin{itemize}
\item[(1)] $\SEnd(X)$ is the monoid of surjective endomorphisms of $X$.
\item[(2)] $\Pol(X)$ is the set of polarized endomorphisms of $X$.
\item[(3)] $\IAmp(X)$ is the set of int-amplified endomorphisms of $X$.
\end{itemize}
\end{definition}

We thank the referee to point out that the assumption of normality in the below two lemmas is necesssary and give the reference.
\begin{lemma}(cf.~\cite[Theorem, Page 220]{Dan})\label{lem-omt}
Let $f:X\to Y$ be a finite surjective morphism of two varieties with $Y$ being normal. Then $f$ is an open map.
\end{lemma}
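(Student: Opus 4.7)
The plan is to reduce to the affine case and then deduce openness from the Cohen--Seidenberg going-down theorem combined with Chevalley's constructibility theorem. The role of the normality of $Y$ is precisely to guarantee that going-down is available, and the role of the finiteness of $f$ is to guarantee both that $f$ is affine over $Y$ and that Chevalley applies.

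First, since openness is local on the target, I would assume $Y = \mathrm{Spec}\, A$ with $A$ a normal noetherian domain. Because $f$ is finite, $X$ is affine over $Y$; write $X = \mathrm{Spec}\, B$ with $B$ module-finite over $A$. Since $X$ is a variety (irreducible and reduced), $B$ is an integral domain, and since $f$ is surjective, the structural map $A \to B$ is injective. Thus $A \subseteq B$ is a finite (hence integral) extension of domains with $A$ integrally closed in its fraction field, which is exactly the hypothesis of the going-down theorem. Translated into geometry, this says: for every $x \in X$ and every generization $y'$ of $f(x) \in Y$, there exists a generization $x'$ of $x$ in $X$ with $f(x') = y'$.

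Second, let $U \subseteq X$ be open; I want $f(U)$ to be open in $Y$. Because $f$ is of finite type, Chevalley's theorem gives that $f(U)$ is a constructible subset of $Y$. Since a constructible set in a noetherian topological space is open if and only if it is stable under generization, it suffices to verify such stability. This is immediate from the going-down statement above: given $y = f(x) \in f(U)$ with $x \in U$, and any generization $y'$ of $y$, choose $x'$ generizing $x$ with $f(x') = y'$; since $U$ is open and contains $x$, it also contains $x'$, whence $y' = f(x') \in f(U)$.

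I do not anticipate any serious obstacle here. The only sensitive point is verifying that going-down applies, which needs $A$ integrally closed and $B$ a domain; both are supplied by the hypotheses (normality of $Y$ and $X$ being a variety). If $X$ were allowed to be reducible, one would have to argue component by component, but the "variety" assumption sidesteps this.
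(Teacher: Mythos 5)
Your proof is correct. The paper does not supply an argument here: it simply cites a theorem on page 220 of Danilov's survey, so there is no in-paper proof for your attempt to match. Your route --- reduce to $\operatorname{Spec} A \to \operatorname{Spec} A$ affine, observe that $A \subseteq B$ is a finite extension of domains with $A$ integrally closed so that going-down applies, and then combine Chevalley constructibility with the criterion that a constructible subset of a noetherian space is open iff it is stable under generization --- is the standard textbook proof of exactly this statement (essentially the argument in EGA IV 2.4.6 and in Danilov's reference). It also correctly isolates where the normality of $Y$ enters, which is the very hypothesis the referee asked the authors to add: without $A$ integrally closed, going-down can fail and $f$ need not be open (e.g.\ the normalization of a nodal cubic). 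One minor point worth making explicit if you write this up: after passing to an affine open $V = \operatorname{Spec} A \subseteq Y$, the preimage $f^{-1}(V)$ is a nonempty open subset of the irreducible reduced $X$, hence itself irreducible and reduced, which is what guarantees $B$ is a domain.
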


By the above lemma, one easily gets the following result.
\begin{lemma}(cf.~\cite[Lemma 7.2]{CMZ})\label{lem-open} Let $f:X\to Y$ be a finite surjective morphism of two varieties with $Y$ being normal.
Let $S$ be a subset of $Y$.
Then $f^{-1}(\overline{S})=\overline{f^{-1}(S)}$.
\end{lemma}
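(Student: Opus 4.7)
The plan is to establish the two inclusions separately: the inclusion $\overline{f^{-1}(S)} \subseteq f^{-1}(\overline{S})$ will come from pure continuity, and the reverse inclusion will come from the openness of $f$ supplied by Lemma \ref{lem-omt}.

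First I would note that $\overline{f^{-1}(S)} \subseteq f^{-1}(\overline{S})$ is purely formal: since $f$ is continuous in the Zariski topology, $f^{-1}(\overline{S})$ is a closed subset of $X$ containing $f^{-1}(S)$, so it contains $\overline{f^{-1}(S)}$. No hypothesis on $Y$ is used for this direction.

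For the reverse inclusion, I would fix $x \in f^{-1}(\overline{S})$ and an arbitrary open neighborhood $U \subseteq X$ of $x$, aiming to produce a point of $U \cap f^{-1}(S)$. Applying Lemma \ref{lem-omt}, the finite surjective morphism $f$ is open, so $f(U)$ is an open neighborhood of $f(x)$ in $Y$. Since $f(x) \in \overline{S}$, the open set $f(U)$ must meet $S$, and any preimage in $U$ of a point of $f(U) \cap S$ lies in $U \cap f^{-1}(S)$. Since $U$ was arbitrary, $x \in \overline{f^{-1}(S)}$.

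There is no serious obstacle once Lemma \ref{lem-omt} is in hand; the whole content of the statement is the openness of $f$, which is precisely what the normality hypothesis on $Y$ supplies. Notably, no regularity condition on $S$ (e.g.\ constructibility) is needed, because openness lets the argument work at the level of arbitrary open neighborhoods and never requires us to analyze $S$ itself.
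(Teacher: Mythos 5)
Your proof is correct and follows exactly the route the paper intends: the paper dismisses the argument with ``By the above lemma, one easily gets the following result,'' and your two-inclusion argument (continuity for $\subseteq$, openness from Lemma~\ref{lem-omt} via the neighborhood characterization of closure for $\supseteq$) is precisely that routine deduction.
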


Next we prepare some useful lemmas about (int-)amplified endomorphisms.

\begin{lemma}\label{lem-cyc} Let $f:X\to X$ be an int-amplified endomorphism of a normal projective variety $X$ of dimension $n$.
Suppose $f^*Z\equiv_w aZ$ (weakly numerical equivalence) for some real number $a$ and effective $r$-cycle $Z\in \N_r(X)$ with $r<n$.
Then either $Z=0$ or $a>1$.
\end{lemma}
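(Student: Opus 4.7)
The plan is to prove $a > 1$ (assuming $Z \neq 0$) in three stages: positivity, a lower bound, and strict inequality.

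First, using the int-amplified hypothesis to fix an ample Cartier divisor $L$ with $H := f^*L - L$ ample, I would show $a > 0$: since $f$ is finite surjective and $Z$ is nonzero effective, $f^*Z$ is nonzero effective as well, so pairing $f^*Z \equiv_w aZ$ against the codimension-$r$ class $L^r$ yields
\[ a \;=\; \frac{L^r \cdot f^*Z}{L^r \cdot Z} \;>\; 0 \]
by ampleness of $L$ and nonvanishing of $L^r$-intersections against nonzero effective cycles.

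Second, I would establish the lower bound $a \ge 1$ by an integrality argument. Writing $Z = \sum_i c_i V_i$ with $c_i > 0$ and $V_i$ distinct prime $r$-dimensional subvarieties, each cycle $(f^k)^* V_i$ is nonzero effective with integer coefficients, so $L^r \cdot (f^k)^* V_i$ is a positive integer, hence $\ge 1$. Summing yields $L^r \cdot (f^k)^* Z \ge \sum_i c_i > 0$, which combined with the iterated relation $L^r \cdot (f^k)^* Z = a^k L^r \cdot Z$ (from $(f^k)^* Z \equiv_w a^k Z$) forces $a^k$ to remain bounded below by a positive constant for all $k \ge 1$, and therefore $a \ge 1$.

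Third, to rule out $a = 1$, I would appeal to the spectral structure of $f^*$ under the int-amplified hypothesis. The eigenvalue $a$ of $f^*$ on $\N_r(X)_\BRR$ corresponds, via the weak numerical pairing with codimension-$r$ cycle classes (which pair into $\N_0(X) \cong \BRR$ where $f^*$ acts by multiplication by $\deg(f)$), to an eigenvalue of $f^*$ acting on codimension-$(n-r)$ algebraic cycle classes. Invoking the characterization of int-amplified endomorphisms from \cite{Meng} --- every eigenvalue of $f^*|_{\N^1(X)_\BRR}$ has absolute value strictly greater than $1$ --- together with the multiplicativity of eigenvalues under cup product (eigenvalues on codimension-$k$ classes decompose into products of $k$ eigenvalues on $\N^1$), I would deduce that every eigenvalue of $f^*$ on codimension-$k$ algebraic classes has absolute value $>1$ for $k \ge 1$. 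Applying with $k = n - r \ge 1$ gives $|a| > 1$, which combined with the previous step yields $a > 1$.

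The hard part will be the third stage: rigorously propagating the eigenvalue-$>1$ bound from $\N^1(X)$ to higher-codimension algebraic cycle classes in the generality of a normal (not necessarily smooth) projective variety. For smooth $X$ this is immediate from the multiplicative cup-product structure on the cohomology ring, but in the normal setting some additional care is needed, either via an equivariant resolution of singularities with a lift of $f$, or via a direct intersection-theoretic argument exploiting ampleness of the iterated divisors $H_k := \sum_{i=0}^{k-1}(f^i)^* H$ together with the key identity $(L + H_k)^r \cdot Z = (\deg(f)/a)^k L^r \cdot Z$ obtained from the projection formula applied to $(f^k)^* Z$.
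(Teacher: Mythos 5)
Your Stages 1 and 2 are correct and cleanly establish $a > 0$ and $a \ge 1$; the integrality argument in Stage 2 (each $L^r\cdot(f^k)^*V_i \ge 1$ for $L$ ample Cartier and $V_i$ prime of dimension $r$, so $a^kL^r\cdot Z \ge \sum_i c_i$ is bounded below) works. But the real content of the lemma is the strict inequality, and your Stage 3 has a genuine gap. The claim that ``eigenvalues on codimension-$k$ classes decompose into products of $k$ eigenvalues on $\N^1$'' is not a theorem: $\N_r(X)$ is not generated as a ring by $\N^1(X)$, even for smooth projective $X$, and certainly not for merely normal $X$; an $r$-cycle class can be an $f^*$-eigenvector without any relation of this kind to divisor eigenclasses. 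Your fallback suggestion (equivariant resolution with a lift of $f$) also does not go through: a surjective endomorphism of a singular variety does not in general lift to a resolution.

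Your last sentence is actually on the right track and essentially reinvents the paper's argument, but stops one step short. The paper applies the projection formula exactly as you do: setting $H:=f^*L-L$ and $H_k:=\sum_{i=0}^{k-1}(f^i)^*H$, one has $(L+H_k)^r\cdot Z=(f^k)^*(L^r)\cdot Z=(\deg f)^k a^{-k}\,L^r\cdot Z$, i.e.
\[
\frac{1}{a^k}\,L^r\cdot Z \;=\; Z\cdot\frac{(f^k)^*(L^r)}{(\deg f)^k}.
\]
The missing ingredient is precisely that the right-hand side tends to $0$ as $k\to\infty$ whenever $r<n$ and $f$ is int-amplified; this is the content of \cite[Lemma 3.11]{Meng}, which the paper invokes as a black box. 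Given that limit, $Z\neq 0$ forces $L^r\cdot Z>0$ and hence $a^{-k}\to 0$, so $a>1$ (your Stage 1 already gives $a>0$, so no Stage 2 is needed). Without an independent proof of that vanishing limit, or an explicit citation of it, your argument does not close.
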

\begin{proof}
Let $H$ be any ample Cartier divisor.
By \cite[Lemma 3.11]{Meng}, $$0=\lim\limits_{i\to +\infty} Z\cdot\frac{(f^i)^*(H^r)}{(\deg f)^i}=\lim\limits_{i\to +\infty} \frac{1}{a^i}(f^i)^*Z\cdot\frac{(f^i)^*(H^r)}{(\deg f)^i}=\lim\limits_{i\to +\infty} \frac{1}{a^i}Z\cdot H^r.$$
Suppose $Z\neq 0$.
Since $Z$ is effective, $Z\cdot H^r>0$ and $a>0$.
Therefore, $a>1$.
\end{proof}

\begin{lemma}\label{lem-amp-per}
Let $\pi:X\dasharrow Y$ be a dominant map of projective varieties.
Let $f:X\to X$ and $g:Y\to Y$ be two surjective endomorphisms such that $g\circ\pi=\pi\circ f$.
Suppose $f$ is amplified.
Then $\Per(g)$ is Zariski dense in $Y$.
\end{lemma}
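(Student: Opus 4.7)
The plan is to use Fakhruddin's density theorem for amplified endomorphisms \cite[Theorem 5.1]{Fak} and then transport periodic points of $f$ through $\pi$, working with the graph of $\pi$ in order to sidestep the indeterminacy. The subtle point is that the pointwise identity $g\circ\pi=\pi\circ f$ cannot in general be iterated along a periodic orbit of $f$, because the orbit need not stay inside the domain of definition of $\pi$; the graph will let us skip this issue.

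First I would let $U\subseteq X$ be the open dense domain of $\pi$ and invoke Fakhruddin's theorem to conclude $\Per(f)$ is Zariski dense in $X$; hence $\Per(f)\cap U$ is Zariski dense in $X$ as well. Then I would pass to the graph $\Gamma\subseteq X\times Y$ of $\pi$ with its projections $p:\Gamma\to X$ (proper birational, an isomorphism over $U$) and $q:\Gamma\to Y$ (dominant). The product endomorphism $f\times g$ of $X\times Y$ preserves $\Gamma$: on the open dense locus $\{(x,\pi(x)):x\in U\cap f^{-1}(U)\}$ of $\Gamma$, the image $(f(x),g(\pi(x)))=(f(x),\pi(f(x)))$ lies in $\Gamma$ by $g\circ\pi=\pi\circ f$, and this inclusion extends to all of $\Gamma$ by closedness of $\Gamma$ and continuity of $f\times g$. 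So $h:=(f\times g)|_\Gamma:\Gamma\to\Gamma$ is a well-defined surjective endomorphism satisfying $p\circ h=f\circ p$.

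The crux, which avoids any iteration of the rational semi-conjugacy, is this: for every $x\in\Per(f)\cap U$ with $f^m(x)=x$, the fibre $p^{-1}(x)$ is the singleton $\{(x,\pi(x))\}$ because $p$ is an isomorphism over $U$; the identity $p\circ h^m=f^m\circ p$ forces $h^m(x,\pi(x))\in p^{-1}(x)=\{(x,\pi(x))\}$, so $h^m(x,\pi(x))=(x,\pi(x))$, and reading off the $Y$-coordinate gives $g^m(\pi(x))=\pi(x)$. Therefore
$$\pi(\Per(f)\cap U)\subseteq\Per(g).$$

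To conclude, since $\Per(f)\cap U$ is Zariski dense in $U$ and $\pi|_U:U\to Y$ is a dominant morphism, its image is Zariski dense in $\pi(U)$, hence in $Y$; the inclusion above then gives the density of $\Per(g)$. The main technical point I would be careful with is the invariance $(f\times g)(\Gamma)\subseteq\Gamma$, which rests on the closedness of $\Gamma$ and on the semi-conjugacy holding on the dense open subset $U\cap f^{-1}(U)$; once that is in place, the singleton-fibre argument is immediate and no orbit-in-$U$ hypothesis is needed.
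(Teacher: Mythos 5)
Your proof is correct and follows the same route as the paper: apply Fakhruddin's density theorem to the amplified $f$, push $\Per(f)\cap U$ forward through $\pi|_U$, and observe that the image lands in $\Per(g)$. The graph argument you give is a careful (and valid) justification of the inclusion $\pi(\Per(f)\cap U)\subseteq\Per(g)$, which the paper asserts without comment; a shortcut to the same conclusion is to note that a fixed point $x\in U$ of $f^m$ automatically lies in $U\cap (f^m)^{-1}(U)$, where the rational maps $\pi\circ f^m$ and $g^m\circ\pi$ agree as morphisms, so $\pi(x)=\pi(f^m(x))=g^m(\pi(x))$ directly.
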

\begin{proof}
Let $U$ be an open dense subset of $X$ such that $\pi|_U$ is well defined.
By \cite[Theorem 5.1]{Fak}, $\Per(f)\cap U$ is Zariski dense in $X$ and hence $\pi(\Per(f)\cap U)$ is Zariski dense in $Y$.
Note that $\pi(\Per(f)\cap U)\subseteq \Per(g)$.
So the lemma is proved.
\end{proof}

\begin{lemma}\label{lem-amp-id}
Let $\pi:X\dasharrow Y$ be a dominant map of projective varieties.
Let $f:X\to X$ be an amplified endomorphisms such that $\pi=\pi\circ f$.
Then $\dim(Y)=0$.
\end{lemma}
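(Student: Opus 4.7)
The plan is to proceed by contradiction: assume $\dim(Y)\ge 1$, construct a nonzero pseudo-effective class on $X$ fixed by $f^{*}$, and derive a contradiction from the amplified hypothesis.

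For the construction, fix an ample Cartier divisor $A$ on $Y$ (which exists because $Y$ is projective and positive-dimensional), and take a birational morphism $p:\widetilde X\to X$ resolving the indeterminacy of $\pi$, so that $\widetilde\pi:=\pi\circ p:\widetilde X\to Y$ is an honest morphism. Then $\widetilde\pi^{*}A$ is a nonzero semi-ample class on $\widetilde X$, and its push-forward $\alpha:=p_{*}(\widetilde\pi^{*}A)\in \N_{n-1}(X)$ is a nonzero pseudo-effective class (for generic $A$ in its linear system, $\widetilde\pi^{*}A$ avoids the $p$-exceptional locus, so the push-forward is nonzero). The identity $\pi\circ f=\pi$ of rational maps, together with functoriality of rational pullback, yields $f^{*}\alpha = (\pi\circ f)^{*}A = \pi^{*}A = \alpha$, so $\alpha$ is a nonzero pseudo-effective class fixed by $f^{*}$.

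To derive a contradiction from amplification, let $L$ be a Cartier divisor on $X$ with $H:=f^{*}L-L$ ample. Since the pullback of an ample divisor by a finite surjective morphism is ample, each iterate
\[
H_{s}\;:=\;(f^{s})^{*}L-L\;=\;\sum_{i=0}^{s-1}(f^{i})^{*}H
\]
is again ample for $s\ge 1$. On the one hand, $\alpha\cdot H_{s}^{\,n-1}>0$, because $\alpha$ is a nonzero effective class and $H_{s}^{\,n-1}$ is represented by a covering family of curves. On the other hand, using $(f^{s})^{*}\alpha=\alpha$ together with the projection formula, $\alpha\cdot H_{s}^{\,n-1}$ re-expands as a combination of terms of the shape $\alpha\cdot L^{k}\cdot (f^{s})_{*}L^{\,n-1-k}$, whose asymptotic behaviour as $s\to\infty$ is incompatible with the strict positivity above; matching the two sides forces $\alpha\cdot(\text{ample})^{n-1}=0$, a contradiction.

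The step I expect to be the main obstacle is completing this last asymptotic/spectral comparison, because it requires careful bookkeeping of how $f^{*}$ and $f_{*}$ act on intermediate-codimension cycles (through $f_{*}f^{*}=(\deg f)\cdot\id$ and the projection formula). A cleaner route, which I would attempt first, is to invoke the characterisation, following Krieger--Reschke \cite{Kr} and Fakhruddin \cite{Fak}, that $f$ amplified forces $1$ not to be an eigenvalue of $f^{*}$ on $\N^{1}(X)_{\R}$ (transporting through $\widetilde X$ when $X$ is singular so that $\alpha$ is represented by a genuine Cartier class); the class $\alpha$ constructed above then immediately exhibits a forbidden $f^{*}$-eigenvector, giving the desired contradiction.
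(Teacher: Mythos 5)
Your approach is genuinely different from the paper's, and it has several gaps that I do not see how to close.

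The paper's proof is a cardinality argument, not a numerical one. It passes to the graph of $\pi$ and, for each closed point $y\in Y$, considers the fibre $X_y$; the hypothesis $\pi=\pi\circ f$ gives $f^{-1}(X_y)=X_y$. After replacing $f$ by a power, $f$ restricted to each irreducible component of $X_y$ is still amplified (cf.~\cite[Lemma 2.3]{Meng}), so by Fakhruddin \cite[Theorem 5.1]{Fak} each such fibre contains a periodic point of $f$. On the other hand, $\Per(f)$ is at most countable for an amplified endomorphism (\cite[Lemma 2.4]{Meng}). If $\dim Y>0$ there are uncountably many disjoint sets $U_y$, forcing $\Per(f)$ to be uncountable --- contradiction. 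No cohomological or numerical machinery is used.

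Your proposal tries to produce an $f^*$-invariant nonzero class $\alpha$ and feed it into a spectral obstruction. There are at least three places where this does not go through. First, the class $\alpha = p_*(\widetilde\pi^*A)$ lives in $\N_{n-1}(X)$ (Weil-type classes), and nothing in the hypotheses makes $X$ normal, let alone $\Q$-factorial, so there is no reason for $\alpha$ to lie in $\N^1(X)$; the eigenvalue statement for amplified maps is a statement about $f^*$ acting on $\N^1(X)$, and it simply does not see $\alpha$. Your suggested fix, "transporting through $\widetilde X$ ... so that $\alpha$ is represented by a genuine Cartier class," is blocked because $f$ does not lift to a resolution of the indeterminacy of $\pi$ in general. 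Second, the identity $f^*\alpha=\alpha$ is asserted from "functoriality of rational pullback," but since $\pi$ is only rational and $f$ does not lift to $\widetilde X$, this requires a genuine argument (a base change / refined Gysin computation, or a direct cycle computation keeping track of ramification multiplicities across the components of $\pi^{-1}(A)$, which a priori may vary). Third, the asymptotic/projection-formula comparison that you leave open essentially needs control of all eigenvalues of $f^*$ on cycle spaces, which is exactly what \emph{int-amplified} gives (see Lemma \ref{lem-cyc} in the paper, which assumes int-amplified) but merely \emph{amplified} does not. Since Lemma \ref{lem-amp-id} is stated and used precisely in the amplified case, this is not a cosmetic gap. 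The set-theoretic route via periodic points avoids all three issues at once.
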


\begin{proof}
We may assume $X$ is over the field $k$ which is uncountable by taking the base change.
Let $U$ be an open dense subset of $X$ such that $\pi|_U$ is well-defined.
Let $W$ be the graph of $\pi$ and $p_1:W\to X$ and $p_2:W\to Y$ the two projections.
For any closed point $y\in Y$, denote by $X_y:=p_1(p_2^{-1}(y))$ and $U_y:=U\cap X_y$.
Note that $U_{y_1}\cap U_{y_2}=\emptyset$ if $y_1\neq y_2$.
By assumption, $f^{-1}(X_y)=X_y$.
Then for some $s_y>0$, $f^{-s_y}(X_y^i)=X_y^i$ for every irreducible component $X_y^i$ of $X_y$, and $f^{s_y}|_{X_y^i}$ is amplified (cf.~\cite[Lemma 2.3]{Meng}).
If $U_y\neq\emptyset$, then $\Per(f)\cap U_y=\Per(f|_{X_y})\cap U_y=\bigcup_i\Per(f^{s_y}|_{X_y^i})\cap U_y\neq \emptyset$ by \cite[Theorem 5.1]{Fak}.
Suppose $\dim(Y)>0$.
There are uncountably many $y\in Y$ such that $U_y\neq\emptyset$ and $\Per(f)\supseteq\bigcup_{y\in Y} (\Per(f)\cap U_y)$.
In particular, $\Per(f)$ is uncountable, a contradiction to \cite[Lemma 2.4]{Meng}.
\end{proof}

We don't know whether the ``amplified" property is preserved via an equivariant descending. Nevertherless, the following result is enough during the proof of Theorem \ref{main-thm-GMMP}.

\begin{lemma}\label{lem-amp-aut}
Let $\pi:X\dasharrow Y$ be a dominant map of projective varieties,
$f:X\to X$ and $g:Y\to Y$ two surjective endomorphisms such that $g\circ\pi=\pi\circ f$, and
$Z$ a closed subvariety of $Y$ such that $g(Z)=Z$.
Suppose $f$ is amplified, $\dim(Z)>0$ and
$\pi$ is well defined over an open dense subset $U\subseteq X$ such that $\pi|_U^{-1}(Z)\neq\emptyset$.
Then $g|_Z\not\in \Aut_0(Z)$.
\end{lemma}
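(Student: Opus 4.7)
The plan is to argue by contradiction: assume $g|_Z\in \Aut_0(Z)$ and derive a contradiction with $\dim Z>0$. I first reduce to an $f$-invariant subvariety $W\subseteq X$ dominating $Z$. Let $\Gamma\subseteq X\times Y$ be the closure of the graph of $\pi$, with projections $p:\Gamma\to X$ (birational) and $q:\Gamma\to Y$; the pair $(f,g)$ restricts to a surjective endomorphism $\sigma:\Gamma\to\Gamma$ with $p\circ \sigma=f\circ p$ and $q\circ \sigma=g\circ q$. Some irreducible component $\hat W$ of $q^{-1}(Z)$ dominates $Z$ via $q$, and $\sigma$ preserves $q^{-1}(Z)$ since $g(Z)=Z$. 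After replacing $f,g$ by a common power (which preserves both amplifiedness of $f$ and the assumption $g|_Z\in \Aut_0(Z)$), we can arrange $\sigma(\hat W)=\hat W$. Set $W:=p(\hat W)\subseteq X$. Inducting on $\dim Z$, we may assume $W$ meets $U$ and $\pi|_W:W\dashrightarrow Z$ is dominant (otherwise replace $Z$ by the proper $g$-invariant subvariety $Z':=\overline{\pi(\pi|_U^{-1}(Z))}$ and apply induction, using that the restriction $g|_{Z'}$ automatically lies in $\Aut_0(Z')$). Because the restriction of an ample Cartier divisor to a projective subvariety remains ample, $f|_W:W\to W$ is amplified, and the equivariance $g|_Z\circ \pi|_W=\pi|_W\circ f|_W$ persists.

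Next, by Fakhruddin's theorem \cite[Theorem 5.1]{Fak} applied to the amplified $f|_W$, the set $\Per(f|_W)$ is Zariski dense in $W$. Restricting to the dense open $W\cap U$ and pushing forward through $\pi|_W$ exactly as in the proof of Lemma~\ref{lem-amp-per}, the set $\Per(g|_Z)$ is Zariski dense in $Z$.

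I then exploit the algebraic-group structure on $\Aut_0(Z)$. Let $T:=\overline{\langle g|_Z\rangle}\subseteq \Aut_0(Z)$, a closed commutative algebraic subgroup; after replacing $g$ (and correspondingly $f$) by the $[T:T^0]$-th power, $T$ is connected. The dichotomy is whether $T$ has a dense open orbit on $Z$ or not. If the generic $T$-orbit has dimension $<\dim Z$, take a Rosenlicht-type rational quotient $Z\dashrightarrow Q$ with $\dim Q>0$; then $g|_Z\in T$ acts trivially on $Q$, the composition $W\dashrightarrow Q$ is dominant rational and $f|_W$-invariant, and Lemma~\ref{lem-amp-id} applied to the amplified $f|_W$ forces $\dim Q=0$, a contradiction. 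Otherwise $T$ has a dense open orbit $O\cong T/S$ in $Z$; commutativity makes $g|_Z$ act on $O$ by translation, so the fixed locus of $(g|_Z)^n$ on $O$ is either empty or all of $O$ according to whether $(g|_Z)^n\in S$. Zariski density of $\Per(g|_Z)$ in $Z$, hence in $O$, forces $(g|_Z)^N\in S$ for some $N\geq 1$; this $(g|_Z)^N$ then fixes $O$ pointwise, and by density of $O$ in $Z$ we conclude $(g|_Z)^N=\id_Z$.

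Finally, from $(g|_Z)^N=\id_Z$ and the equivariance I obtain $\pi|_W=\pi|_W\circ (f|_W)^N$; since $(f|_W)^N$ remains amplified on $W$, Lemma~\ref{lem-amp-id} forces $\dim Z=0$, contradicting $\dim Z>0$. The main obstacle is the case analysis of the $T$-action on $Z$ in the third paragraph: making the Rosenlicht/Chow rational quotient precise when the generic $T$-orbit has intermediate dimension, and converting Zariski density of periodic points on the dense orbit into finite order for $g|_Z$ via the translation structure. A secondary setup concern, addressed via the graph $\Gamma$ together with the induction on $\dim Z$ in the first paragraph, is arranging that $W$ genuinely dominates $Z$ even when no component of $\overline{\pi|_U^{-1}(Z)}$ inside $X$ itself does so.
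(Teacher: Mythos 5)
Your argument follows the same strategy as the paper's proof: reduce to a subvariety dominating $Z$, take the Rosenlicht quotient by the closure $T=\overline{\langle g|_Z\rangle}\subseteq\Aut_0(Z)$, apply Lemma \ref{lem-amp-id}, and use Zariski density of $\Per(g|_Z)$ (itself obtained from \cite[Theorem 5.1]{Fak} as in Lemma \ref{lem-amp-per}). Your case split and the translation argument are mostly packaging differences: the paper concludes directly by noting that the dense orbit $Gy$ through a point $y\in\Per(g)$ is finite (since $g^s\in\Stab_G(y)$ forces $[G:\Stab_G(y)]<\infty$), hence $\dim Y=0$; you instead observe that translation on $T/S$ has no fixed points unless it is trivial, deduce $(g|_Z)^N=\id_Z$, and then invoke Lemma \ref{lem-amp-id} a second time. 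Both routes are sound.

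The one genuine (though small) gap is in your reduction step. You induct on $\dim Z$, replacing $Z$ by $Z':=\overline{\pi(\pi|_U^{-1}(Z))}$ when $W=p(\hat W)$ fails to dominate $Z$. But nothing rules out $\dim Z'=0$: even when $\dim Z>0$ and $\pi|_U^{-1}(Z)\neq\emptyset$, the image $\pi(\pi|_U^{-1}(Z))$ can collapse to a finite set (as happens already for $\pi(s,t)=(s,st)$ and $Z=\{u=0\}$). In that case the lemma's hypothesis $\dim Z'>0$ fails and your induction has no base to land on; you would need to treat $\dim Z'=0$ separately or show it cannot occur under the given equivariance hypotheses. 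To be fair, the paper's own reduction---asserting that some component of $p_1(p_2^{-1}(Z))$ dominates $Z$ ``via $\pi$''---is equally terse on this point, but your explicit induction makes the potential failure more visible and so should say a word about it.
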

\begin{proof}
Let $W$ be the graph of $\pi$ and $p_1:W\to X$ and $p_2:W\to Y$ the two projections.
Denote by $X':=p_1(p_2^{-1}(Z))$.
Then $f(X')\subseteq X'$.
Since $\pi|_U^{-1}(Z)\neq\emptyset$, there exists at least one irreducible component $X'_i$ of $X'$ dominating $Z$ via $\pi$.
If $X'_i$ dominates $Z$, then $f(X'_i)$ dominates $Z$.
Replacing $f$ by some positive power, we may assume $f(X'_i)=X'_i$ for some $X'_i$ dominating $Z$.
Note that $f|_{X'_i}$ is still amplified (cf.~\cite[Lemma 2.3]{Meng}).
Therefore, it suffices for us to consider the case when $Z=Y$.

Suppose the contrary that $g\in \Aut_0(Y)$.
Let $G$ be the closure of the group generated by $g$ in $\Aut_0(Y)$.
Let $\tau:Y\dasharrow Y':=Y/G$.
Then $\tau=\tau\circ g$.
By Lemma \ref{lem-amp-id}, $Y'$ is a point.
Then the orbit $Gy$ is open dense in $Y$ for some $y\in Y$.
By Lemma \ref{lem-amp-per}, we may assume $y\in \Per(g)$.
Then $Gy$ is a finite set and hence $\dim(Y)=0$, a contradiction.
\end{proof}

\section{Totally periodic subvarieties}

Throughout this section, we work over an arbitrary algebraically closed field $k$.

Let $f:X\to X$ be a surjective endomorphism of a normal projective variety $X$ and $S$ a subset of $X$.
Here, a subset $S$ of $X$ is always a set of closed points.
We say $S$ is $f$-{\it invariant} (resp.~$f$-{\it periodic})
if $f(S)=S$ (resp.~$f^{r}(S) = S$ for some $r \ge 1$).
We say $S$ is $f^{-1}$-{\it invariant} (resp.~$f^{-1}$-{\it periodic})
if $f^{-1}(S)=S$ (resp.~$f^{-r}(S) = S$ for some $r \ge 1$).

\begin{lemma}\label{lem-S-closed}
Let $f:X\to X$ be a surjective endomorphism of a projective variety $X$ and $Z$ a Zariski closed subset of $X$.
Then $Z$ is $f^{-1}$-periodic if and only if so is any irreducible component of $Z$.
\end{lemma}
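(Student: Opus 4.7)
The plan is the following. The ``if'' direction is routine: if each irreducible component $Z_i$ of $Z$ satisfies $f^{-r_i}(Z_i)=Z_i$ for some $r_i\ge 1$, then taking $r$ to be any common multiple of the $r_i$'s gives $f^{-r}(Z_i)=Z_i$ for every $i$, and unioning yields $f^{-r}(Z)=\bigcup_i f^{-r}(Z_i)=Z$.

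For the ``only if'' direction, suppose $f^{-s}(Z)=Z$ and replace $f$ by $f^s$ to reduce to $f^{-1}(Z)=Z$. This identity is equivalent to saying $X\setminus Z$ is $f$-invariant, and together with the surjectivity of $f$ it forces $f(Z)=Z$. Hence $f|_Z\colon Z\to Z$ is a surjective self-map; since each $f(Z_i)$ is closed and irreducible in $Z$, it lies in some component $Z_{\tau(i)}$, and because the component set is finite with $\bigcup_i f(Z_i)=Z$, the assignment $i\mapsto\tau(i)$ is a permutation and $f(Z_i)=Z_{\tau(i)}$. Let $r$ denote the order of $\tau$.

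The heart of the argument is the identification $f^{-1}(Z_i)=Z_{\tau^{-1}(i)}$. Here I appeal to the classical fact that any surjective endomorphism of an irreducible projective variety is finite; consequently $f^{-1}(Z_i)$ is equidimensional of dimension $\dim Z_i$. For any irreducible component $W$ of $f^{-1}(Z_i)$, the containment $W\subseteq f^{-1}(Z)=Z$ forces $W\subseteq Z_j$ for some $j$, whence $f(W)\subseteq Z_i\cap Z_{\tau(j)}$. If $\tau(j)\neq i$, the right-hand side is a proper closed subset of the irreducible $Z_i$, so $\dim f(W)<\dim Z_i$; combined with $\dim W=\dim f(W)$ (from finiteness of $f$), this contradicts $\dim W=\dim Z_i$. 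Hence $\tau(j)=i$, so every component of $f^{-1}(Z_i)$ lies in $Z_{\tau^{-1}(i)}$; a dimension comparison (both sides have dimension $\dim Z_i$ and $Z_{\tau^{-1}(i)}$ is irreducible) then forces equality $f^{-1}(Z_i)=Z_{\tau^{-1}(i)}$.

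Iterating this identity $r$ times yields $f^{-r}(Z_i)=Z_{\tau^{-r}(i)}=Z_i$, so every component is $f^{-1}$-periodic. The principal obstacle is the dimension-counting step: without the finiteness of $f$, a component of $f^{-1}(Z_i)$ could in principle sit as a proper subvariety of some other $Z_j$ with $\tau(j)\neq i$, and the pure-dimensional reasoning collapses. It is precisely the classical finiteness of surjective endomorphisms of irreducible projective varieties that makes the argument go through.
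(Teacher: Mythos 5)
Your argument is correct and follows essentially the same route as the paper's: reduce to $f^{-1}(Z)=Z$, show that $f^{-1}$ permutes the components, and conclude by taking a multiple killing the permutation. The paper's proof is quite terse — after establishing $f^{-s}(Z)=Z$ it simply asserts that ``$f^{-s}$ induces a permutation on the finite set $\{Z_i\}$'' and then takes the exponent $sn!$ — whereas you spell out precisely why this is so: surjective endomorphisms of irreducible projective varieties are finite, so $f^{-1}(Z_i)$ is equidimensional of dimension $\dim Z_i$, which combined with the fact that $f$ maps components onto components forces $f^{-1}(Z_i)=Z_{\tau^{-1}(i)}$. This is exactly the content the paper leaves implicit, so you have not introduced a new method, only made the permutation step explicit. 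One minor remark: the equidimensionality of $f^{-1}(Z_i)$ is most cleanly justified via the going-down theorem, which requires $X$ normal; the paper's applications of this lemma pass through a normalization first (e.g.\ Proposition \ref{prop-finiteclosed}), and the paper's own terse proof implicitly relies on the same fact, so you are not creating a gap that isn't already present.
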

\begin{proof}
Let $Z=\bigcup_{1\le i\le n} Z_i$ be the irreducible decomposition of $Z$.
If $f^{-s_i}(Z_i)=Z_i$ for some $s_i>0$,
then $f^{-s}(Z)=Z$ with $s=\prod_{i=1}^n s_i$.

Suppose $f^{-s}(Z)=Z$ for some $s>0$.
Then $f^{-s}$ induces a permutation on the finite set $\{Z_i\}_{i=1}^n$.
Therefore, $f^{-sn!}(Z_i)=Z_i$ for each $i$.
\end{proof}

\begin{definition}\label{def-sigma_f}
Let $f:X\to Y$ be a separable finite surjective morphism of two normal projective varieties.
Denote by $R_f$ the {\it ramification divisor} of $f$ (cf.~\cite[Lemma 4.4]{Ok}), and $\Sigma_f$ the union of the prime divisors in $R_f$.
\end{definition}

\begin{lemma}\label{singrf} Let $f:X\to X$ be an int-amplified separable endomorphism of a normal projective variety $X$. Let $Z$ be an $f^{-1}$-periodic irreducible closed subvariety such that $Z\subsetneq X$. Then $f^{-i}(Z)\subseteq \Sing(X)\cup \Sigma_f$ for some $i \ge 0$.
\end{lemma}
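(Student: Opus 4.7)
I would first reduce to the case $g^{-1}(Z)=Z$ by setting $g:=f^s$, where $s\ge 1$ is chosen so that $f^{-s}(Z)=Z$; note that $g$ inherits the int-amplified and separable properties. The main step is then to prove $Z\subseteq\Sing(X)\cup\Sigma_g$, after which I would translate back to $f$ using irreducibility of $Z$ together with the identity $\Sigma_{f^s}=\bigcup_{i=0}^{s-1}(f^i)^{-1}(\Sigma_f)$ that comes from $R_{f^s}=\sum_{i=0}^{s-1}(f^i)^*R_f$.

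For the main step, since $g^{-1}(Z)=Z$ set-theoretically and $Z$ is irreducible of codimension $c$, the cycle-theoretic pullback $g^*[Z]$ is supported on $Z$ and thus equals $m[Z]$ for some positive integer $m$. The projection formulas $g_*g^*[Z]=(\deg g)[Z]$ and $g_*[Z]=(\deg g|_Z)[Z]$ (the latter using $g(Z)=Z$, which follows from $g^{-1}(Z)=Z$ and surjectivity) force $m\cdot\deg(g|_Z)=\deg g$. Applying Lemma~\ref{lem-cyc} to the int-amplified $g$ and the nonzero class $[Z]\in\N_{n-c}(X)$ yields $m>1$, so $\deg(g|_Z)<\deg g$. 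Then for a generic closed point $z\in Z$, the preimage $g^{-1}(z)$ lies entirely in $Z$ with $|g^{-1}(z)|\le\deg(g|_Z)<\deg g$, so the degree formula $\sum_{x\in g^{-1}(z)}e_x=\deg g$ (valid on the open dense flat locus of $g$) forces some $x\in g^{-1}(z)\subseteq Z$ to have $e_x>1$, i.e., $x\in\Sing(X)\cup\Sigma_g$. Setting $R:=Z\cap(\Sing(X)\cup\Sigma_g)$, we then have $g(R)$ closed (since $g$ is finite) and containing a dense open subset of $Z$, hence $g(R)=Z$; finiteness of $g$ gives $\dim R=\dim Z$, and irreducibility of $Z$ forces $R=Z$, i.e., $Z\subseteq\Sing(X)\cup\Sigma_g$.

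To translate back to $f$: the irreducibility of $Z\subseteq\Sing(X)\cup\bigcup_{i=0}^{s-1}(f^i)^{-1}(\Sigma_f)$ yields either $Z\subseteq\Sing(X)$ (so $j=0$ works) or $f^i(Z)\subseteq\Sigma_f$ for some $0\le i\le s-1$. From $f^{-s}(Z)=f^{-(s-i)}(f^{-i}(Z))=Z$, applying $f^{s-i}$ and using surjectivity gives the key identity $f^i(Z)=f^{-(s-i)}(Z)$, so taking $j:=s-i$ yields $f^{-j}(Z)\subseteq\Sigma_f$. The main obstacle I anticipate is the cycle-theoretic setup in higher codimension on a normal (possibly non-smooth) $X$: namely, rigorously defining the pullback $g^*[Z]=m[Z]$ and extracting $m=\deg g/\deg(g|_Z)$ via the projection formula. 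Once these cycle-theoretic assertions are in hand, the ramification analysis and the dimension count are routine.
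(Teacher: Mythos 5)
Your proof is in the right spirit --- both you and the paper combine Lemma~\ref{lem-cyc} with purity of the branch locus --- but the paper's route is noticeably shorter, and your route contains two imprecisions worth flagging. The paper argues by contradiction directly with $f$: if $Z_i:=f^{-i}(Z)\not\subseteq\Sing(X)\cup\Sigma_f$ for every $i$, then at the generic point of each $Z_i$ both $Z_i$ and $Z_{i-1}$ lie in the smooth locus and avoid $\Sigma_f$, so by purity $f$ is \'etale there and the pullback coefficient is $1$; hence $f^*Z_{i-1}=Z_i$ as cycles and $(f^m)^*Z=Z$, contradicting Lemma~\ref{lem-cyc}. There is no need to pass to $g=f^s$, to compute $\deg(g|_Z)$, to look at fibres over closed points, or to do a dimension count on $g(R)$: once you know $g^*[Z]=m[Z]$ with $m>1$, the integer $m$ \emph{is} the local degree of $g$ at the generic point $\eta$ of $Z$, so $m>1$ already says $\eta$ is a ramification point, and purity immediately gives $Z\subseteq\Sing(X)\cup g^{-1}(\Sing(X))\cup\Sigma_g$ (and then irreducibility plus $g(Z)=Z$ finishes). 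Your steps involving the fibre count over a generic closed $z\in Z$ are thus a detour recovering what you already established.

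Two genuine (if minor) gaps in your detour: (a) a generic closed point $z\in Z$ is typically \emph{not} in the open flat locus of $g$ on $X$ when $\Codim Z\ge 2$, so the equality $\sum_{x\in g^{-1}(z)}e_x=\deg g$ you invoke need not hold; you would have to appeal to upper semicontinuity of the fibre dimension of $g_*\mathcal{O}_X$ to get $\sum e_x\ge\deg g$, which does salvage the inequality you want. (b) Purity only tells you that $e_x>1$ forces $x\in\Sing(X)\cup g^{-1}(\Sing(X))\cup\Sigma_g$; you dropped the term $g^{-1}(\Sing(X))$, which then makes the set $R=Z\cap(\Sing(X)\cup\Sigma_g)$ too small for the claim ``$g(R)$ contains a dense open subset of $Z$'' to follow from the fibre count. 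This is fixable (using $g(Z)=Z$ and irreducibility of $Z$ to absorb the $g^{-1}(\Sing(X))$ case into the $\Sing(X)$ case), but as written the dimension-count step does not close. Your translation back from $\Sigma_{f^s}$ to $\Sigma_f$ via $R_{f^s}=\sum_{i=0}^{s-1}(f^i)^*R_f$ and the identity $f^i(Z)=f^{-(s-i)}(Z)$ is correct, though again unnecessary if you argue as the paper does.
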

\begin{proof} We may assume $\dim(X)>0$. Suppose $f^{-m}(Z)=Z$ for some $m>0$. Let $Z_i=f^{-i}(Z)$, which is irreducible. If $Z_i\not\subset \Sing(X)\cup \Sigma_f$ for each $i$, then $Z_i=f^\ast Z_{i-1}$ by the purity of branch loci and hence $(f^m)^\ast Z=Z$.
By Lemma \ref{lem-cyc}, $Z= 0$, a contradiction.
\end{proof}

Following the proof of \cite[Lemma 6.1]{MZ}, \cite[Lemma 6.2]{CMZ} and \cite[Lemma 8.1]{Meng}, we have the key lemma below.
As shown in \cite[Remark 6.3]{CMZ}, the following condition (2) is necessary.

\begin{lemma}\label{lem-MA-int-p}
Let $f:X\to X$  be an int-amplified separable endomorphism of a projective variety $X$ over the field $k$ of characteristic $p\ge 0$.
Assume $A\subseteq X$ is an irreducible closed subvariety with $f^{-i}f^i(A) = A$ for
all $i\ge 0$.
Assume further either one of the following conditions.
\begin{itemize}
\item[(1)] $A$ is a prime divisor of $X$.
\item[(2)] $p > 0$ and co-prime with $\deg f$.
\item[(3)] $p=0$.
\end{itemize}
Then $M(A) := \{f^i(A)\,|\, i \in \Z\}$ is a finite set.
\end{lemma}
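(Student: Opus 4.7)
The plan is to show that the forward orbit $\{A_i := f^i(A)\}_{i \ge 0}$ is finite; then $M(A)$ is automatically finite, since any repetition $A_i = A_j$ with $i<j$ forces $f^{-(j-i)}(A) = A$ (by applying the hypothesis at indices $i$ and $j$), and hence $f^{j-i}(A) = A$, closing both the forward and backward orbits into a single cycle. I may assume $r := \dim A < n := \dim X$, else $A = X$ and the conclusion is trivial.

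The hypothesis $f^{-i}f^i(A)=A$ implies, in particular, that $f^{-1}(A_{i+1}) = A_i$ set-theoretically: the inclusion $\supseteq$ is automatic, and the reverse follows by applying $f^{-i}$ to both sides and comparing with $A = f^{-(i+1)}(A_{i+1})$, using that a surjective $f^i$ satisfies $f^i(f^{-i}(Z))=Z$. Since $f$ is separable and finite of degree $d := \deg f$, the generic fiber of $f$ has cardinality exactly $d$; combined with the set-theoretic identity $f^{-1}(A_{i+1}) = A_i$ this forces $\deg(f|_{A_i}) = d$, so the standard pushforward formula gives
$$f_\ast [A_i] \;=\; d \cdot [A_{i+1}] \qquad \text{in } \N_r(X).$$
Iterating and pairing with $H^r$ for an ample Cartier divisor $H$, the projection formula yields
$$[A_i] \cdot H^r \;=\; \frac{1}{d^i}\, [A] \cdot (f^i)^\ast H^r .$$
The left-hand side is a positive integer, hence $\ge 1$, because $A_i$ is an integral closed subscheme of pure dimension $r$ and $H$ is Cartier ample. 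But the right-hand side tends to $0$ as $i \to \infty$: this is exactly \cite[Lemma 3.11]{Meng}, applied to the effective $r$-cycle $[A]$ with $r<n$ and the int-amplified separable $f$, as already used in the proof of Lemma \ref{lem-cyc} above. The contradiction shows the $A_i$ cannot all be distinct, so the forward orbit is finite.

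The delicate point, and the main obstacle, is justifying the cycle-theoretic identity $f_\ast[A_i] = d \cdot [A_{i+1}]$ in $\N_r(X)$ when $X$ is not assumed normal and $A$ may have codimension greater than one. When $A$ is a prime divisor (case (1)), this reduces to standard divisor-level intersection theory, and no characteristic hypothesis is needed. In characteristic zero (case (3)), separability is automatic and generic-fiber counting on the set-theoretic level translates cleanly to the cycle level. In positive characteristic, the coprimality assumption $\gcd(p, \deg f) = 1$ in case (2) ensures tameness, so that the fiber of $f|_{A_i}$ over a general point of $A_{i+1}$ consists of $d$ distinct reduced points and the scheme-theoretic multiplicity appearing in $f^\ast[A_{i+1}]$ is exactly $1$. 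Apart from this bookkeeping the three cases go through the same argument above.
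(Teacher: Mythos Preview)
Your central claim, that $\deg(f|_{A_i})=d$, is not justified and is in general false. The generic fibre of $f:X\to X$ having $d$ reduced points says something about fibres over points \emph{general in $X$}, not over points general in $A_{i+1}$; when $A_{i+1}$ lies inside the branch locus of $f$, the fibre over a general point of $A_{i+1}$ may well have fewer than $d$ points. Concretely, take $X=\mathbb{P}^1$, $f([x:y])=[x^2:y^2]$, and $A=\{[0:1]\}$: then $f^{-1}(A)=A$ set-theoretically and the hypothesis $f^{-i}f^i(A)=A$ holds for all $i$, yet $\deg(f|_A)=1\neq 2=d$. Your displayed identity would divide by $1^i$ instead of $d^i$, and the appeal to \cite[Lemma~3.11]{Meng} no longer produces a contradiction. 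In general one only has $f_*[A_i]=d_i[A_{i+1}]$ with $d_i$ possibly a proper divisor of $d$, precisely when the scheme-theoretic preimage $f^{-1}(A_{i+1})$ is nonreduced along $A_i$; nothing you wrote rules this out, and your final paragraph only restates the desired conclusion in each of the three cases without proving it.

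In case (1) your approach can be repaired (after normalizing $X$): writing $f^*A_{i+1}=a_iA_i$ one has $a_id_i=d$, and $a_i\ge 2$ forces $A_i$ to be a component of the ramification divisor $R_f$, which has only finitely many; hence $d_i=d$ for all large $i$, and after shifting the starting index your limit argument goes through. In cases (2) and (3), however, there is no such direct bound on how many $A_i$ can sit inside the branch locus, and this is exactly where the trichotomy matters. The paper's route is different: following \cite[Lemma~6.1]{MZ}, \cite[Lemma~6.2]{CMZ} and \cite[Lemma~8.1]{Meng}, one argues by induction on $\dim X$, passing to an $f^{-1}$-invariant closed subvariety $Z$ (built from the orbit closure) and restricting $f$. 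The sole additional verification the paper supplies is that under condition (2) the restriction $f|_Z$ still has degree coprime to $p$, so that separability is inherited and the induction can continue; this explains why mere separability of $f$ does not suffice and one of (1)--(3) is genuinely needed.
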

\begin{proof}
The proof follows from the proof of \cite[Lemma 6.1]{MZ}, \cite[Lemma 6.2]{CMZ} and \cite[Lemma 8.1]{Meng}.
The only thing we need to check is that if condition (2) holds and $Z$ is an $f^{-1}$-invariant closed subvariety of $X$, then $p$ and $\deg f|_Z$ are co-prime.
Let $d_1=\deg f$ and $d_2=\deg f|_Z$.
Then $f_*Z=d_2 Z$.
Suppose $f^*Z=aZ$ for some integer $a>0$.
By the projection formula, $ad_2=d_1$.
Then $p$ and $d_2$ are co-prime.
\end{proof}

\begin{lemma}\label{lem-finite-orbit}
Let $f:X\to X$  be an int-amplified separable endomorphism of a projective variety $X$ over the field $k$ of characteristic $p\ge 0$.
Assume $A\subseteq X$ is a Zariski closed subset with $f^{-i}f^i(A) = A$ for
all $i\ge 0$.
Assume further either one of the following conditions.
\begin{itemize}
\item[(1)] $A$ is a reduced divisor of $X$.
\item[(2)] $p > 0$ and co-prime with $\deg f$.
\item[(3)] $p=0$.
\end{itemize}
Then each irreducible component $A_k$ of $A$ is $f^{-1}$-periodic.
In particular, $A$ is $f^{-1}$-periodic.
\end{lemma}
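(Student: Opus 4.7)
The plan is to apply Lemma \ref{lem-MA-int-p} to a sufficiently high $f$-iterate $B_k := f^{i_0}(A_k)$ of each component, and then to trace the resulting $f^{-1}$-periodicity back to $A_k$ via Lemma \ref{lem-S-closed}.

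First I would establish the following structural identity: for every $i \ge 0$, the distinct subvarieties among $\{f^i(A_k) : 1 \le k \le n\}$ (where $A_1, \dots, A_n$ are the components of $A$) are exactly the irreducible components of $f^i(A)$. The crux is to show that $f^i(A_k) \subseteq f^i(A_\ell)$ already forces equality: then $A_k \subseteq f^{-i}f^i(A_\ell) \subseteq f^{-i}f^i(A) = A$, and since a surjective endomorphism of a projective variety is finite, $f^{-i}f^i(A_\ell)$ is equidimensional of dimension $\dim A_\ell$ with every component surjecting onto $f^i(A_\ell)$. The maximality of $A_k$ inside $A$ forces $A_k$ itself to be one of these components, so $\dim A_k = \dim A_\ell$ and $f^i(A_k) = f^i(A_\ell)$.

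Next I would introduce the equivalence relations $k \sim_i \ell \iff f^i(A_k) = f^i(A_\ell)$ on $\{1, \dots, n\}$, which become coarser with $i$ and stabilize at some $\sim_{i_0}$. Setting $B_k := f^{i_0}(A_k)$, the structural identity applied to $f^{i_0}(A)$ (which also satisfies $f^{-j}f^j(f^{i_0}(A)) = f^{i_0}(A)$, obtained by iterating the hypothesis), together with the stability of $\sim_i$ for $i \ge i_0$, yields $f^{-j}f^j(B_k) = B_k$ for every $j \ge 0$: by dimension-matching every candidate component of $f^{-j}f^j(B_k)$ inside $f^{i_0}(A)$ must equal $B_{k'}$ for some $k'$, and stability of $\sim$ forces $k' \sim_{i_0} k$, i.e., $B_{k'} = B_k$. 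In each of the cases (1)--(3), Lemma \ref{lem-MA-int-p} now applies to $B_k$: for (1), $A$ being a reduced divisor ensures that $B_k$ is a prime divisor; (2) and (3) are hypotheses on $f$ alone. Hence $M(B_k)$ is finite, and since $f$ permutes this finite set some $f^T$ fixes $B_k$; combined with $f^{-T}f^T(B_k) = B_k$ this gives $f^{-T}(B_k) = B_k$.

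Finally $f^{-i_0}(B_k) = f^{-i_0}f^{i_0}(A_k) = \bigcup_{\ell \sim_{i_0} k} A_\ell$ is $f^{-T}$-invariant as the preimage of the $f^{-T}$-invariant set $B_k$, and it contains $A_k$ as an irreducible component; so Lemma \ref{lem-S-closed} yields that $A_k$ is $f^{-1}$-periodic. I expect the main obstacle to be the verification that $B_k$ inherits the identity $f^{-j}f^j(B_k) = B_k$ from $A$: this combines the stabilization of $\sim_i$ (to collapse the auxiliary candidate components) with the dimension-matching argument (to exclude proper subset components), and must be handled carefully when the $A_k$ have varying dimensions.
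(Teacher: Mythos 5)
Your proposal follows the same strategy as the paper's proof: push $A$ forward by $f^{i_0}$ until the combinatorics of the irreducible components stabilizes (your equivalence relation $\sim_i$ is exactly a bookkeeping of the paper's "same number of irreducible components" condition), deduce that each component $B_k$ of $f^{i_0}(A)$ satisfies $f^{-j}f^j(B_k)=B_k$, apply Lemma \ref{lem-MA-int-p} to get $f^{-1}$-periodicity of $B_k$, and pull back through $f^{-i_0}$ using Lemma \ref{lem-S-closed}. You simply spell out the stabilization step that the paper states in one line. One caution: your justification of the "structural identity" asserts that $f^{-i}f^i(A_\ell)$ is equidimensional with every component dominating $f^i(A_\ell)$ merely because $f^i$ is finite; this going-down statement is automatic when $X$ is normal but can fail for finite morphisms of non-normal varieties, and the lemma as stated does not assume normality. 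The paper's own terse proof implicitly leans on the same point, so this is a shared subtlety rather than a new gap in your argument, but the sentence "since a surjective endomorphism of a projective variety is finite, $f^{-i}f^i(A_\ell)$ is equidimensional" should be flagged as requiring normality (or a reduction to the normal case, as the paper does in Proposition \ref{prop-finiteclosed}).
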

\begin{proof}Choose $i_0 \ge 0$ such that $A':= f^{i_0}(A), f(A'), f^2(A'), \cdots$
all have the same number of irreducible components.
Then $f^{-i}f^i(A'_k) =A'_k$ for every irreducible component $A'_k$ of $A'$.
Now the lemma follows from Lemmas \ref{lem-MA-int-p} and \ref{lem-S-closed}.
\end{proof}

We use Proposition \ref{prop-finiteclosed} below in proving the results in the introduction.
As kindly informed by Professors Dinh and Sibony, when $k = \C$, this kind of result (with a complete proof) first appeared
in \cite[Section 3.4]{DS03}; \cite[Theorem 3.2]{Dinh09} is a more general form including Proposition \ref{prop-finiteclosed} below, requiring a weaker condition and dealing with also dominant meromorphic self-maps of K\"ahler manifolds;
see comments in \cite[page 615]{DS10} for the history of these results.

Here we offer a slightly more algebraic proof and
it works also over any algebraically closed field $k$ with $p = \ch k$ co-prime to $\deg f$
(so that the usual ramification divisor formula is applicable to $f$ and its restrictions to subvarieties stable under the action of the powers of $f$).
The assumption that $p = \ch k$ and $\deg f$ are co-prime is necessary;
see Example \ref{exa-inf-per}.

\begin{proposition}\label{prop-finiteclosed}(see \cite[Section 3.4]{DS03}, \cite[Theorem 3.2]{Dinh09} and comments in \cite[page 15]{DS10}; see also \cite{BD})
Let $f:X\to X$  be an int-amplified endomorphism of a projective variety $X$ over the field $k$ of characteristic $p\ge 0$.
Suppose either $p=0$, or $p$ and $\deg f$ are co-prime.
Then there are only finitely many $f^{-1}$-periodic Zariski closed subsets.
\end{proposition}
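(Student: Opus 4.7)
The plan is to reduce to the normal case via normalization $\tilde X\to X$ (which preserves int-amplification, separability, and the degree, and which pulls back $f^{-1}$-periodic closed subsets of $X$ injectively to $\tilde f^{-1}$-periodic closed subsets of $\tilde X$), then to use Lemma \ref{lem-S-closed} to reduce further to showing that there are only finitely many \emph{irreducible} $f^{-1}$-periodic closed subvarieties. We induct on $n=\dim X$, the base case $n=0$ being trivial.

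For the inductive step, $X$ itself is one such subvariety, so we bound the proper ones. Let $Z\subsetneq X$ be such with period $s$, so $f^{-s}(Z)=Z$ and (by surjectivity combined with finiteness of the int-amplified $f$) also $f^{s}(Z)=Z$. The iterate $f^s$ remains int-amplified and separable with $\deg f^s$ still coprime to $p$, so Lemma \ref{singrf} applied to $f^s$ combined with periodicity yields $Z\subseteq \Sing(X)\cup\Sigma_{f^s}$. The ramification chain rule $R_{f^s}=\sum_{j=0}^{s-1}(f^j)^*R_f$ localizes $\Sigma_{f^s}\subseteq\bigcup_{j=0}^{s-1}f^{-j}(\Sigma_f)$, so by irreducibility either $Z\subseteq \Sing(X)$, or $f^j(Z)\subseteq D$ for some component $D$ of $\Sigma_f$ and some $j\ge 0$. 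In either case, some element of the $f$-forward orbit of $Z$ (which is finite by Lemma \ref{lem-MA-int-p} applied to $f^s$) lies in the proper closed subset $Y_0:=\Sing(X)\cup\Sigma_f$ of dimension at most $n-1$.

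The key technical step is to produce a proper closed $f^{-1}$-invariant subset $\mathcal{E}\subsetneq X$ containing every proper $f^{-1}$-periodic irreducible subvariety. Given $\mathcal{E}$, decompose $\mathcal{E}=\bigcup_\ell \mathcal{E}_\ell$ into irreducible components: $f$ permutes them, a power $f^t$ stabilizes each, and $f^t|_{\mathcal{E}_\ell}$ inherits int-amplification from $f$ (the restrictions of an ample witness $L$ and of the ample $f^*L-L$ remain ample on $\mathcal{E}_\ell$), separability, and coprimality of degree to $p$. The inductive hypothesis applied to each $\mathcal{E}_\ell$ then bounds the count.

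The main obstacle is the construction of $\mathcal{E}$: the naive candidate $\bigcup_{j\ge 0}f^{-j}(Y_0)$ can be Zariski-dense under int-amplified dynamics. A viable route is to let $Y_\infty:=\bigcap_{k\ge 0}f^{-k}(Y_0)$ and take $\mathcal{E}:=f^{-K}(Y_\infty)$, where $K$ stabilizes the ascending Noetherian chain $Y_\infty\subseteq f^{-1}(Y_\infty)\subseteq f^{-2}(Y_\infty)\subseteq\cdots$; then $f^{-1}(\mathcal{E})=\mathcal{E}\subsetneq X$. Lemma \ref{lem-cyc}, which forces the numerical class $[Z]\in\N_r(X)$ of any proper $f^{-1}$-periodic effective cycle to be an $(f^s)^*$-eigenvector of integer eigenvalue exceeding $1$, combined with the pullback structure of $f^*$ on $\N_r(X)$, supplies the positivity needed to verify that every proper $f^{-1}$-periodic irreducible indeed lies in $\mathcal{E}$.
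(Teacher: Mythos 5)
Your overall plan — normalize, pass to irreducible components via Lemma \ref{lem-S-closed}, induct on dimension, use Lemma \ref{singrf} to push a periodic subvariety into $Y_0:=\Sing(X)\cup\Sigma_f$, and then restrict $f$ to a proper totally $f^{-1}$-invariant closed set of smaller dimension — matches the paper's architecture. The gap is exactly where you flagged it: the construction and verification of $\mathcal{E}$.

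Your $\mathcal{E}=f^{-K}(Y_\infty)$ with $Y_\infty=\bigcap_{k\ge 0}f^{-k}(Y_0)$ is indeed proper, closed, and satisfies $f^{-1}(\mathcal{E})=\mathcal{E}$. But unwinding the definitions, $Z\subseteq\mathcal{E}$ says $f^{K+k}(Z)\subseteq Y_0$ for all $k\ge 0$; since the forward orbit of a $f^{-1}$-periodic irreducible $Z$ is a finite cycle (on which $f$ and $f^{-1}$ both act as cyclic shifts), this is the statement that the \emph{entire} orbit of $Z$ lies in $Y_0$. Lemma \ref{singrf} only produces \emph{one} iterate inside $Y_0$: its proof is a contradiction argument starting from the hypothesis that the whole orbit avoids $Y_0$, so its conclusion is precisely the negation of that — nothing more. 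Applying it to each $f^m(Z)$ separately returns the same single-element-of-the-cycle conclusion. Lemma \ref{lem-cyc} is the positivity input inside \ref{singrf}'s proof; it does not upgrade ``one iterate in $Y_0$'' to ``all iterates in $Y_0$,'' and your closing sentence invoking it to conclude $Z\subseteq\mathcal{E}$ is an assertion rather than an argument. So your $\mathcal{E}$ is a genuine totally invariant proper closed set, but it is not shown to contain the subvarieties you need it to contain, and there is no evident reason it should.

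The paper avoids this by not trying to manufacture the invariant set from $Y_0$ alone. Assuming for contradiction there are infinitely many $f^{-1}$-periodic proper irreducibles of the same dimension, it first replaces each one by a suitable inverse iterate $S_i\subseteq Y_0$ (using only the one-iterate conclusion of Lemma \ref{singrf}; this replacement is finite-to-one because orbits are finite), and then sets $Y:=\overline{\bigcup_i S_i}$. This $Y$ is automatically a proper closed subset since it sits inside $Y_0$. The identity $f^{-j}f^j(Y)=Y$ for all $j$ then follows from $f^{-j}f^j(S_i)=S_i$ (which holds for any $f^{-1}$-periodic irreducible) together with Lemma \ref{lem-open}, and Lemma \ref{lem-finite-orbit} upgrades this to $f^{-1}$-periodicity of the irreducible components of $Y$. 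After passing to a power of $f$, the restriction to each component is again int-amplified of strictly smaller dimension, and induction gives the contradiction. The crucial difference is that the invariant set is built from the problematic subvarieties themselves, so there is no containment claim left to verify.
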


\begin{proof}
By taking normalization, we may assume $X$ is normal.
If $S$ is an $f^{-1}$-periodic Zariski closed subsets, then each irreducible component of $S$ is $f^{-1}$-periodic by Lemma \ref{lem-S-closed}.
So it suffices to show that $X$ has only finitely many $f^{-1}$-periodic irreducible closed subvarieties.

We prove by induction on $\dim(X)$. It is trivial if $\dim(X)=0$.
Suppose the contrary that
there are infinitely many $f^{-1}$-periodic proper closed subvarieties of the same dimension $d$.
Then we may find an infinite sequence of $f^{-1}$-periodic proper closed subvarieties $S_i$ of the same dimension $d$ with $S_i\subseteq \Sing(X)\cup \Sigma_f$ by Lemma \ref{singrf}.
Let $Y$ be the closure of the union of $S_i$. Then $Y\subseteq \Sing(X)\cup \Sigma_f$.
By Lemma \ref{lem-open}, for any $j\ge 0$, $f^{-j}f^j(Y)=f^{-j}f^j(\overline{\bigcup S_i})=f^{-j}(\overline{f^j(\bigcup S_i)})=f^{-j}(\overline{\bigcup f^j(S_i)})=\overline{f^{-j}(\bigcup f^j(S_i))}=\overline{\bigcup f^{-j} f^j(S_i)}=\overline{\bigcup S_i}=Y$.
Let $Y_k$ be the irreducible component of $Y$. By Lemma \ref{lem-finite-orbit}, we may assume $f^{-1}(Y_k)=Y_k$ after replacing $f$ by some positive power.
Note that $f|_{Y_k}$ is int-amplified and $\dim(Y_k)<\dim(X)$. Then for each $k$, $Y_k$ contains finitely many $S_i$ by induction. This is a contradiction.
\end{proof}

\begin{example}\label{exa-inf-per}
\rm{}
Let $X:=\mathbb{P}_k^3$ with $p= \ch \, k=3$.
Let $f:X\to X$ via $$f([a:b:c:d])=[a^3+acd:b^3+bcd:c^3+c^2d:d^3-cd^2].$$
Then $f$ is $3$-polarized and separable.
Let $X_1:=\{c=0, d=0\}\cong \mathbb{P}^1$.
Then $f^{-1}(X_1)=X_1$ and $f|_{X_1}([a:b])=[a^3:b^3]$ which is a geometric Frobenius of $\mathbb{P}^1$.
Note that $f|_{X_1}$ is polarized and bijective.
When $a$ is a $(3m-1)$-th root of unity for some $m>0$ and $b$ is a $(3n-1)$-th root of unity for some $n>0$, the point $[a:b:0:0]$ is $f$-periodic and hence $f^{-1}$-periodic.
In particular, there are infinitely many $f^{-1}$-periodic closed points in $X$.
\end{example}

A Zariski-open subset of Zariski-closed subvariety of $X$ is called
a {\it subvariety} of $X$.

\begin{corollary}\label{cor-finite} Let $f:X\to X$  be an int-amplified separable endomorphism of a projective variety $X$ over the field $k$ of characteristic $p\ge 0$.
Suppose either $p=0$, or $p$ and $\deg f$ are co-prime.
Then $X$ has only finitely many
(not necessarily closed) $f^{-1}$-periodic subvarieties.
\end{corollary}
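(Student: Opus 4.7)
The plan is to reduce to Proposition \ref{prop-finiteclosed} by observing that every $f^{-1}$-periodic subvariety $V$ of $X$ is uniquely determined by the pair $(\overline{V}, W)$, where $\overline{V}$ is the Zariski closure of $V$ in $X$ and $W := \overline{V} \setminus V$; the point is that both members of this pair turn out to be $f^{-1}$-periodic \emph{closed} subsets of $X$, to which Proposition \ref{prop-finiteclosed} applies directly to yield only finitely many choices.

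First I would reduce to the case where $X$ is normal by taking the normalization $\pi \colon \tilde{X} \to X$ and lifting $f$ to $\tilde{f} \colon \tilde{X} \to \tilde{X}$. Since $\pi$ is finite and birational, $\tilde{f}$ is separable of the same degree as $f$, so the hypothesis that $\ch k$ and $\deg f$ are coprime is preserved; and if $L$ is an ample Cartier divisor on $X$ with $f^*L - L$ ample, then $\pi^*L$ is ample Cartier on $\tilde{X}$ with $\tilde{f}^* \pi^*L - \pi^*L = \pi^*(f^*L - L)$ ample, so $\tilde{f}$ is int-amplified. For any $f^{-1}$-periodic subvariety $V \subseteq X$, the preimage $\pi^{-1}(V)$ is $\tilde{f}^{-1}$-periodic; its finitely many irreducible components are subvarieties of $\tilde{X}$ that are permuted by a suitable power of $\tilde{f}^{-1}$, hence each is $\tilde{f}^{-1}$-periodic. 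Since $V$ can be recovered as the $\pi$-image of the union of these components, the assignment $V \mapsto \{\text{irreducible components of } \pi^{-1}(V)\}$ is injective, and finiteness on $\tilde{X}$ yields finiteness on $X$.

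Now assuming $X$ normal, fix an $f^{-1}$-periodic subvariety $V$ with $f^{-r}(V) = V$. Applying Lemma \ref{lem-open} to the finite surjective morphism $f^r$ on the normal variety $X$ gives $f^{-r}(\overline{V}) = \overline{f^{-r}(V)} = \overline{V}$, so $\overline{V}$ is an $f^{-1}$-periodic closed subvariety. Because $f^{-r}$ respects set-theoretic complements, the closed subset $W := \overline{V} \setminus V$ of $\overline{V}$ (and hence of $X$) satisfies $f^{-r}(W) = f^{-r}(\overline{V}) \setminus f^{-r}(V) = \overline{V} \setminus V = W$, so $W$ is likewise $f^{-1}$-periodic. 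Proposition \ref{prop-finiteclosed} now bounds $\overline{V}$ and $W$ to finite sets, and consequently $V = \overline{V} \setminus W$ also ranges over a finite set. The only nontrivial bookkeeping is in the normalization reduction, and once $X$ is assumed normal the argument is essentially a one-line consequence of the already-proven closed case; I do not foresee any serious obstacle.
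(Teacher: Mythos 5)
Your argument is correct and follows essentially the same route as the paper's own proof: after reducing to the normal case, you observe that an $f^{-1}$-periodic subvariety $V$ is determined by the pair of $f^{-1}$-periodic Zariski-closed sets $(\overline{V},\,\overline{V}\setminus V)$, and then invoke Proposition \ref{prop-finiteclosed}. The only cosmetic difference is that you present the finiteness directly while the paper phrases it as a contradiction, and you spell out the normalization reduction which the paper leaves implicit; both are sound.
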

\begin{proof} By taking normalization, we may assume $X$ is normal.
If $A$ is $f^{-1}$-periodic, then so are $\overline{A}$ and $\overline{A}-A$ by Lemma \ref{lem-open}. Note that $\overline{A}-A$ is a Zariski closed subset of $X$. If $X$ has infinitely many $f^{-1}$-periodic subvarieties $S_i$, then we may assume $\overline{S_i}-S_i\neq \emptyset$ with $\overline{S_i}=\overline{S_j}$ for any $i, j$ by Proposition \ref{prop-finiteclosed}.  If $\overline{S_i}-S_i=\overline{S_j}-S_j$, then $S_i=S_j$. Hence, $X$ has infinitely many $f^{-1}$-periodic Zariski closed subsets $\overline{S_i}-S_i$, a contradiction to Proposition \ref{prop-finiteclosed}.
\end{proof}

\section{Equivariant MMP and proof of Theorem \ref{main-thm-finite-R}}

In this section, we work over an algebraically closed field $k$ of characteristic $0$.
We prove Theorems \ref{prop-finite-cont-r}
and \ref{thm-GMMP} which include Theorem
\ref{main-thm-finite-R}.

Let $X$ be a projective variety and let $C$ be a curve.
Denote by $R_C:=\mathbb{R}_{\ge 0}[C]$ the ray generated by $[C]$ in $\NE(X)$.
Denote by $\Sigma_C$ the union of curves whose classes are in $R_C$.

\begin{definition}\label{def:extrem_ray} Let $X$ be a projective variety.
Let $C$ be a curve such that $R_C$ is an extremal ray in $\NE(X)$.
We say $C$ or $R_C$ is {\it contractible} if there is a surjective morphism
$\pi : X \to Y$ to a projective variety $Y$ such that
the following hold.

\begin{itemize}
\item[(1)] $\pi_*\mathcal{O}_X=\mathcal{O}_Y$.
\item[(2)] Let $C'$ be a curve in $X$.
Then $\pi(C')$ is a point if and only if $[C'] \in R_C$.
\item[(3)] Let $D$ be a $\Q$-Cartier divisor of $X$.
Then $D\cdot C=0$ if and only if $D\equiv \pi^*D_Y$ (numerical equivalence) for some $\Q$-Cartier divisor $D_Y$ of $Y$.
\end{itemize}
\end{definition}

If $R_C$ is an extremal ray contracted by $\pi$, then $\Sigma_C$ equals $\Exc(\pi)$ which is Zariski closed in $X$; here $\Exc(\pi)$ is the exceptional locus of $\pi$ (i.e. the subset of $X$ along which $\pi$ is not an isomorphism).

When $(X, \Delta)$ is lc, every $(K_X + \Delta)$-negative extremal ray $R_C$ is contractible.

\begin{lemma}(cf.~\cite[Lemma 2.11]{Zh-comp})\label{lem-ray}
Let $X$ be a projective variety and let $R_C$ be a ray of $\NE(X)$ generated by some curve $C$.
Let $h\in \SEnd(X)$.
Then we have:
\begin{itemize}
\item[(1)]
$h_*(R_C)=R_{h(C)}$ and $h^*(R_C)=R_{C'}$ for any curve $C'$ with $h(C')=C$.
\item[(2)]
$h(\Sigma_C)=\Sigma_{h(C)}$ and $h^{-1}(\Sigma_C)=\Sigma_{C'}$ for any curve $C'$ with $h(C')=C$.
\item[(3)]
$R_C$ is extremal if and only if so is $R_{h(C)}$ for some $h\in \SEnd(X)$, if and only if so is $R_{h(C)}$ for any $h\in \SEnd(X)$.
\end{itemize}
Suppose $R_C$ is extremal.
\begin{itemize}
\item[(4)]
If $R_{h(C)}$ is contractible, then so is $R_C$.
\end{itemize}
\end{lemma}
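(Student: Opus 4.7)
The plan is to exploit the fact that any surjective endomorphism $h:X\to X$ of a projective variety is finite, hence the projection formula gives $h_*\circ h^* = h^*\circ h_* = (\deg h)\cdot \id$ on $\N_1(X)$; in particular both $h_*$ and $h^*$ are linear automorphisms of $\N_1(X)$ preserving the effective cone $\NE(X)$. For (1), the formula $h_*[C]=(\deg h|_C)[h(C)]$ yields $h_*(R_C)=R_{h(C)}$ at once. For the pullback half, given any curve $C'$ with $h(C')=C$, one computes $h_*[C']=(\deg h|_{C'})[C]\in R_C$, so $[C']$ lies in the single ray $h_*^{-1}(R_C)$; thus $R_{C'}$ is independent of the choice of $C'$ and agrees with $h^*(R_C)$ up to a positive scalar.

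Part (2) is then a set-theoretic translation of (1), using that the finite map $h$ neither contracts nor splits curves. Any point of $h(\Sigma_C)$ lies on $h(C'')$ for some curve $C''\subseteq \Sigma_C$, and $[h(C'')]\in R_{h(C)}$ by (1); conversely, given $y\in \Sigma_{h(C)}$ lying on a curve $D$ with $[D]\in R_{h(C)}$ and any $y'\in h^{-1}(y)$, an irreducible component $D'\subseteq h^{-1}(D)$ through $y'$ satisfies $h(D')=D$ and, by (1), $[D']\in R_C$, so $y'\in \Sigma_C$. The second equality of (2) is analogous. For (3), since $h_*$ and $h^*$ are $\NE$-preserving linear isomorphisms pairing $R_C$ with $R_{h(C)}$, the extremality condition ``$u+v\in R$ with $u,v\in \NE(X)$ implies $u,v\in R$'' transports in both directions between the two rays; noting that $\id\in \SEnd(X)$, the three statements in (3) are then equivalent.

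For (4), let $\pi:X\to Y$ be the contraction of $R_{h(C)}$ and form the Stein factorization $\pi\circ h = \psi\circ\phi$ with $\phi:X\to Z$ satisfying $\phi_*\SO_X=\SO_Z$ and $\psi:Z\to Y$ finite. A curve $C''$ is contracted by $\phi$ iff it is contracted by $\pi\circ h$ iff $h(C'')$ (still a curve, since $h$ is finite) has class in $R_{h(C)}$, iff $[C'']\in h_*^{-1}(R_{h(C)})=R_C$ by (1); this gives conditions (1) and (2) of Definition \ref{def:extrem_ray} for $\phi$. For the numerical-descent condition, I use the chain
$$h^*\pi^*\N^1(Y) \;=\; \phi^*\psi^*\N^1(Y) \;\subseteq\; \phi^*\N^1(Z) \;\subseteq\; \{D\in \N^1(X) : D\cdot C=0\}.$$
The last set is a hyperplane in $\N^1(X)$ (the annihilator of the line $\R[C]$), while the first is also a hyperplane, since $\pi^*\N^1(Y)$ has codimension one in $\N^1(X)$ by condition (3) of Definition \ref{def:extrem_ray} applied to $\pi$, and $h^*$ is injective. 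Hence all three spaces coincide, giving condition (3) of Definition \ref{def:extrem_ray} for $\phi$.

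The main obstacle I anticipate is the dimension count that collapses the inclusions in (4). One needs both that contracting an extremal ray drops the Picard number by exactly one (packaged into the definition of a contractible extremal ray via its descent condition) and that $h^*$ is injective on $\N^1(X)$; everything else reduces to the projection formula, to Stein factorization, and to the standing observation that a surjective endomorphism of a projective variety is finite, so that $h_*$ and $h^*$ behave as claimed on $\N_1(X)$ and $\N^1(X)$.
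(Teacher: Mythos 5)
Your argument is correct and follows essentially the same route as the paper: finiteness of $h$ makes $h_*$ and $h^*$ invertible on $\N_1(X)$ with $h_*\circ h^*=(\deg h)\,\id$, which gives (1); (2) and (3) are formal consequences; and (4) uses the Stein factorization of $\pi\circ h$. The only point where you diverge is in verifying condition (3) of Definition \ref{def:extrem_ray} for the new contraction $\phi$: the paper, given $D'\cdot C=0$, explicitly writes $D'\equiv h^*D$, deduces $D\cdot h(C)=0$ via the projection formula, descends $D$ along $\pi$, and pulls back; you instead sandwich $\phi^*\N^1(Z)$ between two hyperplanes of the same codimension and let linear algebra force equality. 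Both are valid; the paper's version produces the descending class constructively, while yours is a clean dimension count — a purely stylistic difference.
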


\begin{proof}
Let $h\in \SEnd(X)$.
Note that $h_*$ and $h^*$ are invertible linear selfmaps of $\N_1(X)$ and $h_*^{\pm}(\NE(X))={h^*}^{\pm}(\NE(X))=\NE(X)$.
Note that $h_*C=(\deg h|_C) h(C)$.
So $h_*(R_C)=R_{h(C)}$.
Since $(h_*\circ h^*)|_{\N_1(X)}=(\deg h)\,\id$, 
$h_*(R_{C'})=R_{h(C')}=R_{h(C)}$ implies $h^*(R_C)=R_{C'}$ for any curve $C'$ with $h(C')=C$.
So (1) is proved.

For any curve $E$ with $[E]\in R_C$, $[h(E)]\in R_{h(C)}$.
Then $h(\Sigma_C)=h(\bigcup_{[E]\in R_C} E)=\bigcup_{[E]\in R_C} h(E)\subseteq \Sigma_{h(C)}$.
For any curve $F$ with $[F]\in R_{h(C)}$, there is some curve $F_1$ such that $h(F_1)=F$.
Note that $R_{F_1}=h^*(R_F)=h^*(R_{h(C)})=R_C$ by (1).
So $[F_1]\in R_C$ and hence $h(\Sigma_C)=\Sigma_{h(C)}$.
Similarly, $h^{-1}(\Sigma_C)=\Sigma_{C'}$.
So (2) is proved.

By (1), $R_{h(C)}=h_*(R_C)$. Note that the set of extremal rays are stable under the actions $h_*$ and $h^*$. So (3) is straightforward.

For (4), suppose $R_{h(C)}$ is extremal and contractible by $\pi:X\to Y$.
Taking the Stein factorization of $\pi\circ h$, we have $\pi':X\to Y'$ and $\tau:Y'\to Y$ such that $\pi'_*\mathcal{O}_X=\mathcal{O}_Y$ and $\tau$ is a finite surjective morphism.
We claim that $\pi'$ is the contraction of $R_C$.
For any curve $C'$ on $X$, $\pi'(C')$ is a point if and only if $\pi(h(C'))$ is a point;
if and only if $[h(C')]\in R_{h(C)}$; if and only if $[C']\in R_C$ by (1).
Let $D'$ be a $\Q$-Cartier divisor of $X$ such that $D'\cdot C=0$.
Since $h^*|_{\NS_{\Q}(X)}$ is invertible, $D'\equiv h^*D$ for some $\Q$-Cartier divisor $D$.
By the projection formula, $D\cdot h(C)=0$.
Since $\pi$ is the contraction of $h(C)$, $D\equiv \pi^*D_Y$ for some $\Q$-Cartier divisor $D_Y$ of $Y$.
Then $D'\equiv h^*(\pi^*D_Y)=\pi'^*(\tau^*D_Y)$.
So the claim is proved.
\end{proof}

\begin{lemma}\label{lem-sigma-per}
Let $f: X \to X$ be an int-amplified endomorphism of a projective variety.
Let $h\in \SEnd(X)$.
Let $R_C$ be a contractible extremal ray of $\NE(X)$ and $F$ an irreduicble component of $\Sigma_C$.
Then we have:
\begin{itemize}
\item[(1)] $h^i(\Sigma_C)$ and $h^i(F)$ are $f^{-1}$-periodic for any $i\in \Z$.
\item[(2)] $\Sigma_C$ and $F$ are $h^{-1}$-periodic.
\end{itemize}
\end{lemma}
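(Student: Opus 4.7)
The plan is to establish (1) first and then deduce (2) from (1) using Proposition~\ref{prop-finiteclosed}. The key observation for (1) is the saturation identity $f^{-m}(f^m(\Sigma_D)) = \Sigma_D$, valid for any curve $D$ in $X$ and any $m\ge 0$: applying Lemma~\ref{lem-ray}(2) to the endomorphism $f^m$ and the ray $R_{f^m(D)}$ gives $f^m(\Sigma_D) = \Sigma_{f^m(D)}$ and $f^{-m}(\Sigma_{f^m(D)}) = \Sigma_{D_0}$ for any curve $D_0$ with $f^m(D_0) = f^m(D)$; choosing $D_0 = D$ yields the identity. Since we are in characteristic zero, Lemma~\ref{lem-finite-orbit}(3) applied with $A = \Sigma_D$ gives that every irreducible component of $\Sigma_D$ is $f^{-1}$-periodic, and hence so is $\Sigma_D$ itself (as a finite union, by taking the LCM of the periods).

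To handle $h^i(\Sigma_C)$ and $h^i(F)$ in (1), set $D := h^i(C)$ for $i\ge 0$ (for $i<0$, pick any curve $D$ with $h^{|i|}(D) = C$); by Lemma~\ref{lem-ray}(2) one has $h^i(\Sigma_C) = \Sigma_D$, so the previous step shows $h^i(\Sigma_C)$ is $f^{-1}$-periodic and so are all its irreducible components. Since $h^i$ is finite surjective and $\Sigma_C = \Exc(\pi_C)$ is equidimensional for an extremal contraction $\pi_C$, each $h^i(F_l)$ (where $F_l$ runs over the irreducible components of $\Sigma_C$) is itself an irreducible component of $h^i(\Sigma_C) = \bigcup_l h^i(F_l)$; in particular $h^i(F)$ is $f^{-1}$-periodic. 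For (2), every set in $\{h^{-j}(\Sigma_C) : j \ge 0\}$ and $\{h^{-j}(F) : j \ge 0\}$ is $f^{-1}$-periodic by (1); Proposition~\ref{prop-finiteclosed} forces each of these collections to be finite, so $h^{-j}(\Sigma_C) = h^{-k}(\Sigma_C)$ for some $0 \le j < k$. Applying the surjective map $h^j$ gives $\Sigma_C = h^{-(k-j)}(\Sigma_C)$, so $\Sigma_C$ is $h^{-1}$-periodic; the same argument works for $F$.

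The main obstacle is justifying that $h^i(F)$ is actually an irreducible component of $h^i(\Sigma_C)$ rather than merely an irreducible closed subset possibly properly contained in some higher-dimensional $h^i(F_l)$. This rests on the equidimensionality of $\Sigma_C = \Exc(\pi_C)$, which is standard for Mori-type extremal contractions but for general contractible extremal rays in the sense of Definition~\ref{def:extrem_ray} must be invoked (or established separately, for instance from the structure of the contraction $\pi_C$) before the last sentence of the $f^{-1}$-periodicity argument for $h^i(F)$ goes through.
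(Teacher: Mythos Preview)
Your argument mirrors the paper's proof step for step: the paper likewise uses Lemma~\ref{lem-ray}(2) to get $f^{-j}f^j(h^{\pm 1}(\Sigma_C))=h^{\pm 1}(\Sigma_C)$, applies Lemma~\ref{lem-finite-orbit} for $f^{-1}$-periodicity, invokes Lemma~\ref{lem-S-closed} to pass to $h(F)$ and $h^{-1}(F)$, and then deduces (2) from Proposition~\ref{prop-finiteclosed} by finding a repeat among the negative iterates $h^m(F)$, $m<0$. The step you flag---that $h^i(F)$ (resp.\ each component of $h^{-j}(F)$) is an irreducible component of $h^i(\Sigma_C)$ (resp.\ $h^{-j}(\Sigma_C)$)---is glossed over in the paper too; it just cites Lemma~\ref{lem-S-closed} without comment.

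Your proposed resolution via equidimensionality of $\Exc(\pi_C)$ is, however, not the right one: for a contractible ray in the sense of Definition~\ref{def:extrem_ray} (e.g.\ a small contraction) the exceptional locus need not be equidimensional, and in any case equidimensionality alone does not control the components of $h^{-j}(F)$. The clean fix, valid once $X$ is normal, is that every $h^j$ is then an \emph{open} map (Lemma~\ref{lem-omt}), which forces every irreducible component of $(h^j)^{-1}(G)$ to surject onto $G$ for any irreducible closed $G\subseteq X$. For $i\ge 0$: if $G$ is the component of $\Sigma_{h^i(C)}$ containing $h^i(F)$, then the component of $(h^i)^{-1}(G)\subseteq\Sigma_C$ through $F$ has dimension $\dim G$; maximality of $F$ in $\Sigma_C$ forces this component to equal $F$, whence $\dim F=\dim G$ and $h^i(F)=G$. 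For $i<0$: each component $F'$ of $(h^{|i|})^{-1}(F)$ satisfies $h^{|i|}(F')=F$ by openness; any component $F''$ of $(h^{|i|})^{-1}(\Sigma_C)$ containing $F'$ then has $h^{|i|}(F'')\subseteq\Sigma_C$ irreducible and containing $F$, hence equal to $F$, so $F''\subseteq (h^{|i|})^{-1}(F)$ and $F''=F'$. No equidimensionality is needed.
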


\begin{proof}
Let $h\in \SEnd(X)$, $C'=h(C)$ and $C=h(\widetilde{C})$ for some curve $\widetilde{C}$.
Since $R_C$ is contractible, $\Sigma_C$ is Zariski closed in $X$.
By Lemma \ref{lem-ray}, $h(\Sigma_C)=\Sigma_{C'}$ and $h^{-1}(\Sigma_C)=\Sigma_{\widetilde{C}}$ are Zariski closed in $X$; and for any $j\ge 0$, $f^{-j}f^j(\Sigma_{C'})=\Sigma_{C'}$ and $f^{-j}f^j(\Sigma_{\widetilde{C}})=\Sigma_{\widetilde{C}}$ .
By Lemma \ref{lem-finite-orbit}, $h(\Sigma_C)$ and $h^{-1}(\Sigma_C)$ are both $f^{-1}$-periodic.
By Lemma \ref{lem-S-closed}, $h(F)$ and $h^{-1}(F)$ are then $f^{-1}$-periodic.
So (1) is proved.

Note that there are only finitely many $f^{-1}$-periodic Zariski closed subsets in $X$ by Proposition \ref{prop-finiteclosed}.
We have $h^m(F)=h^n(F)$ for some $m<n<0$.
So $h^{m-n}(F)=F$ and $\Sigma_C$ is $h^{-1}$-periodic by Lemma \ref{lem-S-closed}.
So (2) is proved.
\end{proof}

Following \cite[Lemma 6.2]{MZ}, we may further have the following stronger result.

\begin{lemma}\label{lem-finite-cont-r}
Let $f: X \to X$ be an int-amplified endomorphism of a projective variety $X$.
Let $E\subseteq X$ be a Zariski closed subset and let $\mathcal{R}_E$ be the set of all contractible extremal rays $R_C$ with $\Sigma_C=E$.
Then we have
\begin{itemize}
\item[(1)]
$\mathcal{R}_E$ is a finite set with $\sharp \mathcal{R}_E\le \dim(E)$.
\item[(2)]
Let $F$ be an irreducible component of $E$.
Then $$\mathcal{R}_E^F:=\{R_{h(C)}\,|\, R_C\in \mathcal{R}_E, h\in \SEnd(X), h^{-1}(F)=F\}$$ is a finite set with $\sharp \mathcal{R}_E^F\le \dim(F)$.
\end{itemize}
\end{lemma}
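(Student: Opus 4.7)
Following the strategy of \cite[Lemma 6.2]{MZ}, I would tackle (1) and (2) together, treating (2) as the main assertion from which (1) will be extracted by specialization. The core technical step is a single-component dimension bound: given distinct contractible extremal rays $R_{C_1},\ldots,R_{C_k}$ of $\NE(X)$, each admitting a representative curve in a fixed irreducible subvariety $F\subseteq X$, I would show $k\le \dim F$. Each $R_{C_i}$ determines a contraction $\pi_i:X\to Y_i$, giving nef classes $L_i:=\pi_i^*A_i$ (for ample $A_i$ on $Y_i$) with $L_i\cdot C_i=0$ and $L_i\cdot C_j>0$ for $j\ne i$. Upon restriction to the normalization of $F$, the classes $L_i|_F$ remain nef, and each morphism $\pi_i|_F$ has positive-dimensional generic fiber (since it contracts the curves in $R_{C_i}$ lying in $F$). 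A careful intersection-theoretic argument, exploiting these positive-dimensional fibers via a product-morphism/successive-hyperplane argument, should force $k\le \dim F$.

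Granting the single-component bound, I would derive (1) as follows. Let $F$ be an irreducible component of $E$ of maximal dimension. Every $R_C\in\mathcal{R}_E$ satisfies $\Sigma_C=E\supseteq F$, so curves in $R_C$ cover $F$ and one can pick a representative curve of $R_C$ inside $F$. The single-component bound then yields $\sharp\mathcal{R}_E\le \dim F=\dim E$.

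For (2), the same single-component bound applies directly: for $R_C\in\mathcal{R}_E$ and $h\in\SEnd(X)$ with $h^{-1}(F)=F$, pick $C\subseteq F$ representing $R_C$; then $h(C)\subseteq h(F)=F$, and by Lemma~\ref{lem-ray}(3) the ray $R_{h(C)}$ is extremal. Thus every ray in $\mathcal{R}_E^F$ has a representative curve in $F$, and the bound $\sharp\mathcal{R}_E^F\le \dim F$ follows. Even before invoking the dimension bound, finiteness alone can be seen from the Section~3 machinery: by Lemma~\ref{lem-sigma-per}, each $\Sigma_{h(C)}=h(E)$ is $f^{-1}$-periodic, so Proposition~\ref{prop-finiteclosed} forces it to take only finitely many values, and for each such value one applies (1) to bound the number of extremal rays sharing that support.

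\textbf{Main obstacle.} The delicate step is the single-component bound $k\le \dim F$. Linear independence of the classes $[C_1],\ldots,[C_k]$ in $\N_1(X)$ is cheap (from extremality), but converting this into a dimension bound involving $\dim F$ — using only that each $C_i$ lies in $F$ — requires combining the very special sign-pattern of the intersection matrix $(L_i\cdot C_j)$ (zero diagonal, positive off-diagonal) with the nef cone of $F^\nu$ and the positive-dimensional fibers of $\pi_i|_F$. This is the technical kernel inherited from \cite[Lemma 6.2]{MZ} and is where the adaptation to the present, more general, contractible-extremal-ray setting requires the most care.
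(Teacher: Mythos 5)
Your plan is in the right ballpark—restrict intersection data to an irreducible component $F$ and extract a bound of the form ``number of rays $\le \dim F$''—but there are two genuine gaps, one structural and one of substance.

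The substantive gap is that the ``single-component dimension bound'' $k\le\dim F$, which you correctly identify as the technical kernel, is never actually proved; you only gesture at ``a careful intersection-theoretic argument, exploiting these positive-dimensional fibers via a product-morphism/successive-hyperplane argument.'' The paper's mechanism here is concrete and worth internalizing: with $j:F\hookrightarrow X$ the inclusion and $\NS_\C(X)|_F:=j^*\NS_\C(X)$, one forms the set $S:=\{D|_F : (D|_F)^{\dim F}=0\}$, which is the zero locus of a single nonzero homogeneous polynomial of degree $\dim F$ (nonzero because $(H|_F)^{\dim F}=H^{\dim F}\cdot F>0$ for ample $H$), hence a hypersurface with at most $\dim F$ irreducible components. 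For each $R_C\in\mathcal{R}_E$, the codimension-one subspace $L_C:=j^*\pi_C^*\NS_\C(Y_C)$ lies in $S$ because $(\pi_C)_*F=0$, and so is one of these components; distinct rays give distinct $L_C$. The finiteness then comes from the degree of one explicit polynomial, not from a transversality or product-morphism argument, and you would need to supply something of comparable force.

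The structural gap concerns part (2): your single-component bound is stated for \emph{contractible} extremal rays, but the rays $R_{h(C)}\in\mathcal{R}_E^F$ are only known to be extremal (Lemma \ref{lem-ray}(3)); Lemma \ref{lem-ray}(4) shows contractibility passes from $R_{h(C)}$ back to $R_C$, not forward. So you cannot invoke a contraction $\pi_{h(C)}$ and nef classes $L_{h(C)}=\pi_{h(C)}^*A$ to feed into your bound. The same issue infects your alternative finiteness route (Lemma \ref{lem-sigma-per} plus Proposition \ref{prop-finiteclosed} plus ``apply (1) to each support''), since (1) again only counts \emph{contractible} rays. The paper sidesteps this by transporting the subspace rather than the contraction: it sets $L_{h(C)}:=(h^*)^{-1}(L_C)$, observes that $S$ is $h^*$-invariant because $h^*F=aF$ forces $(h^*D)^{\dim F}\cdot F=\frac{\deg h}{a}D^{\dim F}\cdot F$, and then shows $R\mapsto L_R$ remains injective on $\mathcal{R}_E^F$. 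That transport step—working with $(h^*)^{-1}$ of the hyperplane instead of assuming $R_{h(C)}$ is contractible—is the idea your sketch is missing, and without it (2) does not follow from the bound you propose to prove.
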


\begin{proof}
We assume that $\mathcal{R}_E$ is non-empty.
Let $R_C\in \mathcal{R}_E$ (we may assume $C\subseteq F$).
We have a
contraction $\pi_C:X\to Y_C$ and a linear exact sequence
$$0\to \NS_{\C}(Y_C)\xrightarrow{\pi_C^{\ast}}\NS_{\C}(X)\xrightarrow{\cdot C} \mathbb{C}\to 0.$$
So $\pi_C^{\ast}\NS_{\C}(Y_C)$ is a subspace in $\NS_{\C}(X)$ of codimension $1$.
Let $F$ be an irreducible component of $E$.
Let $j:F\hookrightarrow X$ be the inclusion map.
For any $\C$-Cartier divisor $D$ of $X$,
denote by $D|_F:=j^*D\in \NS_{\mathbb{C}}(F)$ the pullback.
Let $$\NS_{\mathbb{C}}(X)|_F:=j^*(\NS_{\mathbb{C}}(X))$$ which is a subspace of $\NS_{\mathbb{C}}(F)$.
Denote by $$L_{C}:=\{D|_F\,:\, D\in \NS_{\mathbb{C}}(X), D\cdot C=0\}.$$
Then $L_{C}=j^*\pi_C^*(\NS_{\mathbb{C}}(Y_C))$ is a subspace in $\NS_{\mathbb{C}}(X)|_F$ of codimension at most $1$.
Note that for an ample divisor $H$ in $X$, $H|_F\cdot C=H\cdot C\neq 0$.
Therefore, $H|_F\not\in L_{C}$ and hence $L_{C}$ has codimension $1$ in $\NS_{\mathbb{C}}(X)|_F$.
Denote by $$S:=\{D|_F\in \NS_{\mathbb{C}}(X)|_F\,:\,(D|_F)^{\dim(F)}=0\}.$$

We claim that $S$ is a hypersurface (an algebraic set defined by a non-zero polynomial) in
the complex affine space $\NS_{\mathbb{C}}(X)|_F$ and each $L_{C}$ is an irreducible component of $S$
in the sense of Zariski topology.
Indeed, let $\{e_1,\cdots,e_k\}$ be a fixed basis of $\NS_{\mathbb{C}}(X)|_F$.
Then $$S=\{(x_1,\cdots, x_k)\,:\, (\sum\limits_{i=1}^k x_ie_i)^{\dim(F)}=0\}$$
 is determined by a homogeneous polynomial of degree $\dim(F)$ and the coefficient of the term $\prod_i x_i^{\ell_i}$ is the intersection number $e_1^{\ell_1}\cdots e_k^{\ell_k}$.
Note that for an ample divisor $H$ in $X$, $H|_F\in \NS_{\mathbb{C}}(X)|_F$ and $(H|_F)^{\dim(F)}=H^{\dim(F)}\cdot F>0$.
So $e_1^{\ell_1}\cdots e_k^{\ell_k}\neq 0$ for some $\ell_i$.
In particular, $S$ is determined by a non-zero polynomial.
Since $\dim(\pi_C(F))<\dim(F)$, ${\pi_C}_*F=0$.
For any $P\in \NS_{\mathbb{C}}(Y)$, we have
$$(\pi_C^*P|_F)^{\dim(F)}=(\pi_C^*P)^{\dim(F)}\cdot F=P^{\dim(F)}\cdot {\pi_C}_*F=0$$ by the projection formula.
So $\pi_C^*P|_F\in S$. Hence $L_{C} \subseteq S$.
Since $L_{C}$ and $S$ have the same dimension, each $L_{C}$ is an irreducible component of $S$.
The claim is proved.

Let $h\in \SEnd(X)$ such that $h^{-1}(F)=F$.
The pullback $h^{\ast}$ induces an automorphism of $\NS_{\mathbb{C}}(X)|_F$.
Note that $h^{\ast}F= aF$ (as cycles) for some $a>0$, and $(h^{\ast}D)^{\dim(F)}\cdot F=\frac{\deg h}{a}D^{\dim(F)}\cdot F$.
Hence, $D\in S$ if and only if $h^{\ast}D\in S$. This implies that $S$ is $h^{\ast}$-invariant.
By the projection formula, $L_{h(C)}=(h^*)^{-1}(L_C)$ is also an irreducible component of $S$.
Note that $S$ has at most $\dim(F)$ irreducible components.
So (2) follows from the claim below. Clearly, (1) follows from (2) directly.

Let $g, g'\in \SEnd(X)$ such that $g^{-1}(F)=g'^{-1}(F)=F$.
Let $R_C\in \mathcal{R}_E$ and let $C'\subseteq F$ be another (not necessarily contractible or extremal) curve.
We claim that $R_{g(C)}=R_{g'(C')}$ if and only if $L_{g(C)}=L_{g'(C')}$.
Suppose $L_{g(C)}=L_{g'(C')}$.
Let $C_1$ be some curve such that $g(C_1)=g'(C')$.
By the projection formula, $L_{g(C)}=(g^*)^{-1}(L_C)$ and $L_{g'(C')}=(g^*)^{-1}(L_{C_1})$.
Then $L_{C}=L_{C_1}$.
Let $H$ be an ample Cartier divisor of $Y$.
Then $\pi_C^*H\cdot C_1=0$ implies that $\pi_C(C_1)$ is a point and hence $R_{C}=R_{C_1}$.
Therefore, $R_{g(C)}=R_{g(C_1)}=R_{g'(C')}$ by Lemma \ref{lem-ray}.
Another direction is trivial.
So the claim is proved.
\end{proof}

\begin{theorem}\label{prop-finite-cont-r}
Let $f: X \to X$ be an int-amplified endomorphism of a projective variety $X$.
Let $\mathcal{R}_{contr}$ be the set of all contractible extremal rays $R_C$.
Then we have:
\begin{itemize}
\item[(1)] $\mathcal{R}_{contr}$ is a finite set.
\item[(2)] The set $$\widetilde{\mathcal{R}}_{contr}:=\{(h_*)^i (R_{C})\,|\, R_C\in \mathcal{R}_{contr}, h\in \SEnd(X), i\in \mathbb{Z}\}$$ is  finite.
\item[(3)] There is a finite-index submonoid $H$ of
$\SEnd(X)$ such that $h_*(R)=h^*(R)=R$ for any $R\in \widetilde{\mathcal{R}}_{contr}$ and $h\in H$.
\end{itemize}
\end{theorem}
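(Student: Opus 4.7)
The plan is to establish (1) by combining the finiteness of $f^{-1}$-periodic closed subsets (Proposition \ref{prop-finiteclosed}) with the finer finiteness in Lemma \ref{lem-finite-cont-r}(1), then to obtain (2) as a consequence of (1) via the invertibility of $h_*$ and Lemma \ref{lem-ray}(4), and finally to read off (3) from the induced action of $\SEnd(X)$ on the finite set $\mathcal{R}_{contr}$.

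For (1), I would note that for any $R_C \in \mathcal{R}_{contr}$, applying Lemma \ref{lem-sigma-per}(2) with $h = f$ shows that $\Sigma_C$ is $f^{-1}$-periodic. Proposition \ref{prop-finiteclosed} then confines the closed subset $E := \Sigma_C$ to one of finitely many possibilities. For each such $E$, Lemma \ref{lem-finite-cont-r}(1) bounds the number of contractible extremal rays $R_C$ with $\Sigma_C = E$ by $\dim(E)$. Summing over the finitely many $E$ gives $|\mathcal{R}_{contr}| < \infty$.

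For (2), the key remark is that $h_* \colon \N_1(X) \to \N_1(X)$ is invertible for every $h \in \SEnd(X)$. Given any $R' = R_{C'} \in \mathcal{R}_{contr}$, surjectivity (hence finiteness) of $h$ yields a curve $C$ with $h(C) = C'$; Lemma \ref{lem-ray}(1) then identifies $h_*^{-1}(R') = R_C$, and Lemma \ref{lem-ray}(4) says $R_C$ is itself contractible. Hence $h_*^{-1}(\mathcal{R}_{contr}) \subseteq \mathcal{R}_{contr}$. Since $\mathcal{R}_{contr}$ is finite by (1) and $h_*^{-1}$ is injective, its restriction to $\mathcal{R}_{contr}$ is a bijection; its inverse $h_*|_{\mathcal{R}_{contr}}$ is therefore also a bijection of $\mathcal{R}_{contr}$ onto itself. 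Iterating, $(h_*)^i(\mathcal{R}_{contr}) = \mathcal{R}_{contr}$ for every $i \in \Z$, so $\widetilde{\mathcal{R}}_{contr} = \mathcal{R}_{contr}$ and is thus finite.

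For (3), I would define a monoid homomorphism
\[
\rho \colon \SEnd(X) \longrightarrow \Sym(\mathcal{R}_{contr}), \qquad h \longmapsto h_*|_{\mathcal{R}_{contr}},
\]
into a finite symmetric group; its image, being a submonoid of a finite group, is automatically a subgroup. Setting $H := \rho^{-1}(\id)$ yields a submonoid with $\Ker(\rho) = H$ and $\Imm(\rho)$ a finite group, so $H$ is of finite-index in $\SEnd(X)$ in the sense of the introduction. For $h \in H$ and $R \in \widetilde{\mathcal{R}}_{contr} = \mathcal{R}_{contr}$, by construction $h_*(R) = R$; combining this with $h^* \circ h_* = (\deg h)\,\id$ on $\N_1(X)$ yields $h^*(R) = R$ as rays. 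The main obstacle is conceptual rather than technical: Lemma \ref{lem-ray}(4) controls only pull-backs of contractible rays, not push-forwards, and the device that upgrades this ``wrong-direction'' inclusion to genuine $h_*$-invariance of $\mathcal{R}_{contr}$ is the finite-set-plus-injective-self-map pigeonhole, which is precisely why (1) must be in hand before (2).
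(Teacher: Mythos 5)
Your proof is correct, and your handling of part (2) is genuinely different from (and cleaner than) the paper's.

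For (1) you follow the paper's own route exactly: Lemma \ref{lem-sigma-per} puts $\Sigma_C$ among the finitely many $f^{-1}$-periodic closed subsets of Proposition \ref{prop-finiteclosed}, and Lemma \ref{lem-finite-cont-r}(1) bounds the number of contractible extremal rays with a fixed $\Sigma_C$. For (3), defining a monoid homomorphism $\SEnd(X) \to \Sym(\mathcal{R}_{contr})$ and taking the kernel is essentially the paper's final paragraph, just spelled out.

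Where you diverge is (2). The paper proves finiteness of $\widetilde{\mathcal{R}}_{contr}$ by a contradiction argument: it introduces the intermediate set $\widetilde{\mathcal{R}}_{contr}^0 = \{h_*(R_C)\}$, uses Lemma \ref{lem-sigma-per} to reduce (after replacing $h_j$ by $\widetilde{h}_j := h_j \circ h_1^{s-1}$) to endomorphisms with $\widetilde{h}_j^{-1}(F) = F$, and then invokes the finer counting statement Lemma \ref{lem-finite-cont-r}(2) (finiteness of $\mathcal{R}_E^F$) to get a contradiction; a second step handles negative powers. Your argument bypasses Lemma \ref{lem-finite-cont-r}(2) entirely: from Lemma \ref{lem-ray}(3)--(4) you get $h_*^{-1}(\mathcal{R}_{contr}) \subseteq \mathcal{R}_{contr}$ directly, and since $h_*$ is invertible on $\N_1(X)$ and $\mathcal{R}_{contr}$ is finite by (1), the restriction of $h_*^{-1}$ is a self-bijection of $\mathcal{R}_{contr}$, hence $h_*(\mathcal{R}_{contr}) = \mathcal{R}_{contr}$, giving the stronger conclusion $\widetilde{\mathcal{R}}_{contr} = \mathcal{R}_{contr}$. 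This pigeonhole route is both shorter and more transparent than the paper's, and it makes (3) immediate because $\rho(h) = h_*|_{\mathcal{R}_{contr}}$ is then automatically a permutation. Your observation that the asymmetry in Lemma \ref{lem-ray}(4) (it only controls preimages of contractible rays, not images) is overcome by combining it with (1) and the finite-set injectivity trick is exactly the right conceptual point.
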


\begin{proof}
We use the notation in Lemma \ref{lem-finite-cont-r}.
Let $P_f$ be the set of $f^{-1}$-periodic Zariski closed subsets, which is finite by Proposition \ref{prop-finiteclosed}.
For any $R_C\in \mathcal{R}_{contr}$, $\Sigma_C\in P_f$ by Lemma \ref{lem-sigma-per}.
Then $\mathcal{R}_{contr}=\bigcup_{E\in P_f} \mathcal{R}_E$ is finite by
Lemma \ref{lem-finite-cont-r}.
So (1) is proved.

Let $\widetilde{\mathcal{R}}_{contr}^0:=\{h_*(R_{C})\,|\, R_C\in \mathcal{R}_{contr}, h\in \SEnd(X)\}$.
We first claim that $\widetilde{\mathcal{R}}_{contr}^0$ is finite.
Suppose the contrary that $\widetilde{\mathcal{R}}_{contr}^0$ is infinite.
Since $\mathcal{R}_{contr}$ is finite by (1), there exist some $R_C\in \mathcal{R}_{contr}$ and infinitely many $h_j\in \SEnd(X)$ with $j>0$ such that the set $\{{h_j}_*(R_{C})\}_{j=1}^{\infty}$ is infinite.
Let $F$ be an irreducible component of $\Sigma_C$.
By Lemma \ref{lem-sigma-per}, $h_j^{-1}(F)\in P_f$ and  $h_1^{-s}(F)=F$ for some $s>0$.
Note that $P_f$ is finite.
So we may assume $h_j^{-1}(F)=h_1^{-1}(F)$ for any $j>0$.
Let $\widetilde{h}_j:=h_j\circ h_1^{s-1}$.
Then $\widetilde{h}_j^{-1}(F)=h_1^{-s}(F)=F$.
For any $j_1, j_2>0$,
$({\widetilde{h}}_{j_1})_*(R_C)=({\widetilde{h}}_{j_2})_*(R_C)$
implies $(h_{j_1})_*(R_C)=(h_{j_2})_*(R_C)$ by Lemma \ref{lem-ray}.
In particular, the set $\{(\widetilde{h}_j)_*(R_C)\}_{j=1}^\infty$ is infinite.
However, this contradicts Lemma \ref{lem-finite-cont-r}.

Since $\widetilde{\mathcal{R}}_{contr}^0$ is finite, for any $h\in \SEnd(X)$ and $R_C\in \mathcal{R}$, $(h^m)_*(R_C)=(h^n)_*(R_C)$ for some $0<m<n$.
By Lemma \ref{lem-ray}, for any $i>0$, $(h_*)^{-i}(R_C)=(h_*)^{k(n-m)-i}(R_C)=(h^{k(n-m)-i})_*(R_C)\in \widetilde{\mathcal{R}}_{contr}^0$ for $k\gg 1$.
Then $\widetilde{\mathcal{R}}=\widetilde{\mathcal{R}}_{contr}^0$ is finite.
So (2) is proved.

Note that the monoid action of $\SEnd(X)$ on $\widetilde{\mathcal{R}}_{contr}$ via $(h',h_* (R_{C}))\to (h'\circ h)_*(R_C)$ is well defined.
So (3) is proved.
\end{proof}

\begin{theorem}\label{thm-finite-R}
Let $(X, \Delta)$ be an lc pair.
Let $f:X\to X$  be an int-amplified endomorphism.
Then we have:
\begin{itemize}
\item[(1)] The set $\mathcal{R}_{neg}$ of $(K_X + \Delta)$-negative extremal rays in $\NE(X)$ is finite.
\item[(2)] The set $$\widetilde{\mathcal{R}}_{neg}:=\{(h_*)^i (R)\,|\, R\in \mathcal{R}_{neg}, h\in \SEnd(X), i\in \mathbb{Z}\}$$ is finite.
\item[(3)] There is a finite-index submonoid $H$ of
$\SEnd(X)$ such that $h_*(R)=h^*(R)=R$ for any $R\in \widetilde{\mathcal{R}}_{neg}$ and $h\in H$.
\end{itemize}
\end{theorem}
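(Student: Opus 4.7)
The plan is to derive Theorem \ref{thm-finite-R} as a direct consequence of Theorem \ref{prop-finite-cont-r}, with the log canonical hypothesis supplying the missing link via a cone theorem. The key observation is that since $(X,\Delta)$ is lc, the cone theorem for lc pairs (\cite[Theorem 1.1]{Fu11}) guarantees that every $(K_X+\Delta)$-negative extremal ray of $\NE(X)$ admits a contraction morphism satisfying the conditions of Definition \ref{def:extrem_ray}. In other words, one gets the containment $\mathcal{R}_{neg}\subseteq \mathcal{R}_{contr}$, and this is the only place where the log canonical assumption enters.

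Once this containment is secured, part (1) is immediate from Theorem \ref{prop-finite-cont-r}(1), as any subset of the finite set $\mathcal{R}_{contr}$ is finite. For part (2), the containment $\mathcal{R}_{neg}\subseteq \mathcal{R}_{contr}$ propagates through the definitions to give
\[
\widetilde{\mathcal{R}}_{neg} = \{(h_*)^i(R)\,|\, R\in \mathcal{R}_{neg},\ h\in\SEnd(X),\ i\in\Z\} \subseteq \widetilde{\mathcal{R}}_{contr},
\]
and the right-hand side is finite by Theorem \ref{prop-finite-cont-r}(2). For part (3), I would apply Theorem \ref{prop-finite-cont-r}(3) verbatim: it produces a finite-index submonoid $H\le\SEnd(X)$ such that $h_*(R)=h^*(R)=R$ for every $R\in\widetilde{\mathcal{R}}_{contr}$ and every $h\in H$, and since $\widetilde{\mathcal{R}}_{neg}\subseteq\widetilde{\mathcal{R}}_{contr}$ the same $H$ works for $\widetilde{\mathcal{R}}_{neg}$.

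Because this is a pure ``packaging'' argument, I do not expect any genuine obstacle. The only point that might warrant a brief explicit remark is that the int-amplified hypothesis on $f$ is indispensable: it is used inside Theorem \ref{prop-finite-cont-r} (ultimately through Proposition \ref{prop-finiteclosed} and Lemma \ref{lem-sigma-per}) to force $\Sigma_C$ into one of only finitely many $f^{-1}$-periodic Zariski closed subsets, and this finiteness is what bounds the number of contractible rays. Once that machinery is cited, the present theorem reduces to a one-line invocation of Fujino's cone theorem followed by three appeals to Theorem \ref{prop-finite-cont-r}.
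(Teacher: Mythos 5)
Your proposal is correct and matches the paper's own proof exactly: Fujino's cone theorem for lc pairs gives $\mathcal{R}_{neg}\subseteq\mathcal{R}_{contr}$, and all three parts then follow directly from Theorem \ref{prop-finite-cont-r}. The paper states this in a single sentence, and your extra explication of where the int-amplified hypothesis is used is accurate but not part of the proof proper.
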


\begin{proof}
Let $R\in \mathcal{R}_{neg}$.
Since $(X,\Delta)$ is lc, $R=R_C$ for some curve $C$ and $R$ is contractible by the Cone theorem in \cite[Theorem 1.1]{Fu11}.
Then we are done by Theorem \ref{prop-finite-cont-r}.
\end{proof}

\begin{theorem}\label{thm-GMMP}
Let $f:X\to X$  be an int-amplified endomorphism of a $\Q$-factorial normal projective variety $X$.
Then any finite sequence of MMP starting from $X$ is $G$-equivariant for some finite-index submonoid $G$ of $\SEnd(X)$.
\end{theorem}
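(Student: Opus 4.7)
The plan is to induct on the length $r$ of the MMP sequence $X = X_0 \dasharrow X_1 \dasharrow \cdots \dasharrow X_r$. The base case $r=0$ is trivial with $G = \SEnd(X_0)$. For the inductive step, I would isolate the first contraction and reduce to a sequence of length $r-1$.

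Concretely, the first step $X_0 \dasharrow X_1$ contracts a $K_{X_0}$-negative extremal ray $R_0$. Applying Theorem \ref{thm-finite-R} to the int-amplified $f$, I obtain a finite-index submonoid $H_0 \le \SEnd(X_0)$ such that $h_*(R_0) = h^*(R_0) = R_0$ for every $h \in H_0$. By Lemma \ref{lem-ray}(2) each such $h$ sends $\Sigma_{R_0}$ to itself, so $h$ descends to a surjective endomorphism $h_1$ on $X_1$: for a divisorial or Fano contraction $\pi:X_0 \to X_1$ this is immediate since $h$ maps fibres of $\pi$ to fibres of $\pi$; for a flip with diagram $X_0 \to Z \leftarrow X_1$ one first descends $h$ through $X_0 \to Z$ and then uses uniqueness of the flip over $Z$ to lift to $h_1$ on $X_1$. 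This yields a monoid homomorphism $\rho_0 : H_0 \to \SEnd(X_1)$.

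Since $H_0$ is finite-index, iterating through the defining chain of kernels to finite groups shows that some power $f^m$ of $f$ lies in $H_0$. Because $f^m$ is again int-amplified, replacing $f$ by $f^m$ we may assume $f \in H_0$ and form $f_1 := \rho_0(f) \in \SEnd(X_1)$. The single-endomorphism equivariant MMP theory developed in \cite{Meng} ensures that $f_1$ is again int-amplified on the $\Q$-factorial normal variety $X_1$. The inductive hypothesis applied to $(X_1, f_1)$ and the length-$(r-1)$ sequence $X_1 \dasharrow \cdots \dasharrow X_r$ then produces a finite-index submonoid $G_1 \le \SEnd(X_1)$ relative to which the remainder of the sequence is equivariant. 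Setting $G := \rho_0^{-1}(G_1) = \{ h \in H_0 : \rho_0(h) \in G_1 \}$, every $g \in G$ descends compatibly through the whole sequence by construction. To see $G$ is finite-index in $\SEnd(X_0)$, use the chain $G \le H_0 \le \SEnd(X_0)$: the outer inclusion is Theorem \ref{thm-finite-R}(3), while the inner inclusion inherits a chain of kernel-type homomorphisms to finite groups by pulling back along $\rho_0$ the chain witnessing $G_1$ as finite-index in $\SEnd(X_1)$ and intersecting each stratum with $\rho_0(H_0)$.

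The principal obstacle is the preservation of the int-amplified property under the descent $f \mapsto f_1$ at each MMP step, which is precisely what makes Theorem \ref{thm-finite-R} reapplicable to $X_1$; this is the content of the single-endomorphism equivariant MMP in \cite{Meng} and will be quoted. The remaining ingredients---the existence of $\rho_0$, well-definedness of the descent through flips via uniqueness of the relative canonical model, and the formal manipulations with the chain definition of a finite-index submonoid---are essentially routine once Theorem \ref{thm-finite-R} on the finiteness of $K_X$-negative extremal rays is in hand.
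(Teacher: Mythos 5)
Your argument is essentially the same as the paper's, with two cosmetic differences: you induct from the front of the MMP sequence (first contraction, then the tail) while the paper inducts from the back (assumes the first $s-1$ steps are equivariant and extends one step), and you establish the descent of $f$ to each $X_i$ step by step, whereas the paper invokes \cite[Theorem~8.2]{Meng} up front to make the entire sequence $f$-equivariant (after replacing $f$ by a power) with each $f_i$ int-amplified. Both routes are correct, and your bookkeeping that a pullback of a finite-index chain along $\rho_0$ stays finite-index is fine.

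One small omission: before invoking Theorem~\ref{thm-finite-R} on $X_0$ (and then on each $X_i$), you need $(X_i, 0)$ to be lc, which is a hypothesis of that theorem. The paper notes this explicitly, citing \cite[Theorem~1.6]{Meng} (or \cite[Corollary~1.3]{BH}) for the fact that a $\Q$-Gorenstein normal variety with an int-amplified endomorphism is lc; alternatively you could invoke Theorem~\ref{prop-finite-cont-r}, which has no lc hypothesis and already applies since the ray $R_0$ is contractible. Also, for the flip step, ``uniqueness of the flip over $Z$'' is roughly the right idea, but one must check the endomorphism of $Z$ preserves the relative (anti-)canonical algebra defining the flipped model; this is precisely the content of \cite[Lemma~3.6]{Zh-comp} (or \cite[Lemma~6.6]{MZ}), which the paper cites and which you should too.
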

\begin{proof}
By \cite[Theorem 1.6]{Meng}
(see also \cite[Corollary 1.3]{BH}),
$X$ is lc.
Let $X:=X_1\dasharrow\cdots \dasharrow X_{s}$ be a sequence of MMP.
By \cite[Theorem 8.2]{Meng}, replacing $f$ by a positive power, we may assume the above sequence is $f$-equivariant and $f_i:=f|_{X_i}$ is int-amplified.

We show the theorem by induction on $s$.
Suppose $X:=X_1\dasharrow\cdots \dasharrow X_{s-1}$ is $G$-equivariant.
By Theorem \ref{thm-finite-R}, replacing $G$ by its finite-index submonoid, we may assume $h^*(R)=R$ for any $h\in G|_{X_{s-1}}$ and any $K_{X_{s-1}}$-negative extremal ray $R$.
If $\pi_{s-1}:X_{s-1}\dasharrow X_s$ is a divisorial contraction or a Fano contraction,
then $\pi_{s-1}$ is $G|_{X_{s-1}}$-equivariant.
If $\pi_{s-1}$ is a flip, then $\pi_{s-1}$ is $G|_{X_{s-1}}$-equivariant by further applying \cite[Lemma 3.6]{Zh-comp} (cf.~\cite[Lemma 6.6]{MZ}).
\end{proof}

\section{Proof of Theorem \ref{main-thm-GMMP} and Corollary \ref{main-cor-comp}}

Throughout this section, we work over characteristic $0$. First, we prepare the following lemmas which are frequently used in the proof of our main theorems.
\begin{lemma}\label{lem-alg-eig}
Let $f:X\to X$ be a surjective endomorphism of a projective variety.
Then all the eigenvalues of $f^*|_{\NS_{\C}(X)}$ are algebraic integers.
\end{lemma}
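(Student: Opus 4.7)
The plan is to reduce the statement to the elementary fact that the eigenvalues of an integer matrix are algebraic integers, by exhibiting $f^{\ast}$ as an endomorphism of a finitely generated free abelian group.

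First I would recall that by the theorem of the base (Severi--Neron), the Neron-Severi group $\NS(X)$ is a finitely generated abelian group; write $\NS(X) = \NS(X)_{\mathrm{free}} \oplus \NS(X)_{\mathrm{tors}}$, where $\NS(X)_{\mathrm{free}}$ is free of finite rank $\rho := \rho(X)$. Since $f \colon X \to X$ is surjective, pullback of Cartier divisors sends numerically trivial divisors to numerically trivial divisors: indeed, for any curve $C$ on $X$ and any Cartier divisor $D$ with $D \equiv 0$, the projection formula gives $f^{\ast}D \cdot C = D \cdot f_{\ast}C = 0$, so $f^{\ast}D \equiv 0$. Therefore $f^{\ast}$ descends to a group homomorphism $f^{\ast} \colon \NS(X) \to \NS(X)$, and in particular restricts to a $\Z$-linear endomorphism of the finitely generated free abelian group $\NS(X)_{\mathrm{free}}$.

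Next I would choose a $\Z$-basis of $\NS(X)_{\mathrm{free}}$ and write $f^{\ast}|_{\NS(X)_{\mathrm{free}}}$ as a matrix $M \in M_{\rho}(\Z)$. Its characteristic polynomial $\chi_M(t) = \det(tI - M) \in \Z[t]$ is monic with integer coefficients. Tensoring with $\C$ identifies $\NS_{\C}(X) = \NS(X) \otimes_{\Z} \C$ with $\NS(X)_{\mathrm{free}} \otimes_{\Z} \C$ (the torsion part dies), and under this identification $f^{\ast}|_{\NS_{\C}(X)}$ is represented by the same matrix $M$. Hence the eigenvalues of $f^{\ast}|_{\NS_{\C}(X)}$ are precisely the roots of the monic integer polynomial $\chi_M(t)$, which are by definition algebraic integers.

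This is essentially a one-step argument, so there is no serious obstacle; the only minor point to verify carefully is that $f^{\ast}$ is well-defined on $\NS(X)$ (i.e.\ preserves numerical equivalence), which, as noted above, is immediate from the surjectivity of $f$ and the projection formula.
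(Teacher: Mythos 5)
Your proof is correct and takes essentially the same approach as the paper, which simply notes that $f^{\ast}|_{\NS_{\C}(X)}$ is induced by $f^{\ast}$ on the finite-rank $\Z$-module $\NS(X)$ and concludes; you have merely spelled out the details (theorem of the base, well-definedness of $f^{\ast}$ on $\NS(X)$ via the projection formula, and the integer characteristic polynomial).
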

\begin{proof}
The action $f^*|_{\NS_{\C}(X)}$ is induced by $f^*|_{\NS(X)}$.
Note that $\NS(X)$ is a $\mathbb{Z}$-module of finite rank.
The lemma follows.
\end{proof}

\begin{lemma}\label{lem-rel-int}
Let $\pi:X\to Y$ be a surjective morphism of two projective varieties such that $\pi$ is not a finite morphism and $\pi^*{\NS_{\Q}(Y)}$ is a codimension-$1$ subspace of $\NS_{\Q}(X)$.
Let $f:X\to X$ and $g:Y\to Y$ be surjective endomorphisms such that $\pi\circ f=g\circ \pi$.
Then $f^*|_{\NS_{\Q}(X)/\NS_{\Q}(Y)}=q\,\id$ for some integer $q>0$.
\end{lemma}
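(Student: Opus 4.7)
The plan is to view the scalar $q$ in two complementary ways: as an eigenvalue of $f^*$ on $\NS_\C(X)$ (for integrality), and as the scalar by which $f_*$ acts on a one-dimensional subspace of $N_1(X)_\Q$ spanned by $\pi$-contracted curves (for positivity).

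First, the commutation $\pi\circ f=g\circ\pi$ gives $f^*\pi^*=\pi^*g^*$, so $\pi^*\NS_\Q(Y)$ is $f^*$-stable. Since the quotient $\NS_\Q(X)/\pi^*\NS_\Q(Y)$ is one-dimensional, $f^*$ acts on it as multiplication by a unique scalar $q\in\Q$. Because the characteristic polynomial of $f^*|_{\NS_\C(X)}$ equals the product of those on the invariant subspace and on the quotient, $q$ is an eigenvalue of $f^*|_{\NS_\C(X)}$; by Lemma \ref{lem-alg-eig} it is therefore an algebraic integer, and being rational, $q\in\Z$.

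For positivity, I pass to the dual side using the perfect intersection pairing $\NS_\Q(X)\times N_1(X)_\Q\to\Q$. Let $V\subseteq N_1(X)_\Q$ be the annihilator of $\pi^*\NS_\Q(Y)$; then $\dim V=1$. By the projection formula, the class of every curve contracted by $\pi$ lies in $V$, and since $\pi$ is not finite such a curve exists, so $V=\Q[C_0]$ for some contracted curve $C_0$, and every nonzero effective class in $V$ is a strictly positive multiple of $[C_0]$. Next, $f$ carries $\pi$-contracted curves to $\pi$-contracted curves (because $\pi(f(C'))=g(\pi(C'))$ is a point whenever $\pi(C')$ is), so $f_*$ preserves $V$; and since $f^*$ and $f_*$ are adjoint under the pairing, the action of $f_*$ on the one-dimensional $V$ is again multiplication by $q$. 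It remains to choose $C$ so that $f_*[C]\neq 0$: take a general fibre $X_y=\pi^{-1}(y)$, of positive dimension $\dim X-\dim Y$. Since $f$, a surjective endomorphism of a projective variety, is generically finite, $f|_{X_y}\colon X_y\to X_{g(y)}$ is generically finite onto its image, which has the same positive dimension $\dim X-\dim Y$; hence $X_y$ contains curves $C$ with $f|_C$ non-constant. For such $C$, $f_*[C]=\deg(f|_C)\,[f(C)]$ is a nonzero effective class in $V$, so a strictly positive multiple of $[C]$, forcing $q>0$. Together with the integrality, $q\in\Z_{\ge 1}$.

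The step I expect to be the main obstacle is the positivity. It requires ruling out the pathology that $f$ might contract every curve lying in a fibre of $\pi$; the crucial input is the generic finiteness of a surjective endomorphism of a projective variety, which ensures that the restriction of $f$ to a general $\pi$-fibre remains generically finite onto an image of the full expected dimension, producing plenty of curves in that fibre which are not contracted by $f$.
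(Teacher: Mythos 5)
Your proof is correct, but it takes a genuinely different route for the positivity step. The paper argues directly on the divisor side: fix an ample Cartier divisor $H$; since $f^*$ acts on $\NS_{\Q}(X)/\pi^*\NS_{\Q}(Y)$ by $q\,\id$, the class $f^*H - qH$ lies in $\pi^*\NS_{\Q}(Y)$; if $q\le 0$ this class would be ample (as $f^*H$ is ample and $-qH$ is nef), which is impossible because $\pi^*\NS_\Q(Y)$ contains no ample class when $\pi$ is not finite (any $\pi^*D_Y$ kills a $\pi$-contracted curve). You instead dualize to $N_1(X)_\Q$, identify the annihilator $V$ of $\pi^*\NS_\Q(Y)$ with the line spanned by a contracted curve class, note that $f_*|_V = q\,\id$, and then produce a $\pi$-contracted curve not contracted by $f$ to deduce $q>0$. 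Your version is slightly longer and needs the extra geometric input (generic finiteness of $f$ and a genericity argument to find the right curve), whereas the paper gets the same conclusion from the single observation that the image of $\pi^*$ contains no ample class. Both ultimately rest on the implicit fact that a surjective endomorphism of a projective variety is finite (the paper uses it to make $f^*H$ ample; you use it to make $f|_{X_y}$ generically finite), so neither is strictly more general. Your duality between ``no ample class in $\pi^*\NS_\Q(Y)$'' and ``$V$ is spanned by an effective, hence positively paired, class'' is a nice way to see why the two arguments are essentially mirror images of one another; the paper's is simply the more economical side of that mirror.
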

\begin{proof}
Note that $\NS_{\Q}(X)/\NS_{\Q}(Y)$ is $1$-dimensional.
Then $f^*|_{\NS_{\Q}(X)/\NS_{\Q}(Y)}=q\,\id$ for some $q\in \Q$.
By Lemma \ref{lem-alg-eig}, $q$ is then an integer.
Let $H$ be an ample Cartier divisor on $X$.
Then $f^*H-qH\in \pi^*\NS_{\Q}(Y)$.
Suppose $q\le 0$.
Then $f^*H-qH$ is ample on $X$.
Since $\pi$ is not finite, there is no ample class in $\pi^*\NS_{\Q}(Y)$.
So we get a contradiction.
\end{proof}

\begin{lemma}\label{lem-amp-iamp}
Let $(X, \Delta)$ be a $\Q$-factorial lc pair.
Let $\pi:X\dasharrow Y$ be either a divisorial contraction, a flip, or a Fano contraction of a $K_X+\Delta$-negative extremal ray.
Let $f:X\to X$ and $g:Y\to Y$ be surjective endomorphisms such that $g\circ \pi=\pi\circ f$.
Suppose there are a dominant map $\tau:W\dasharrow X$ and an amplified endomorphism $h:W\to W$ such that $f\circ \tau=\tau\circ h$.
Suppose further $g$ is int-amplified.
Then $f$ is int-amplified.
\end{lemma}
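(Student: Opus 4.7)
The plan is to invoke the spectral characterization from \cite{Meng}: a surjective endomorphism is int-amplified if and only if every eigenvalue of its pullback on $\NS_{\C}$ has modulus strictly greater than $1$, while it is amplified if and only if $1$ is not such an eigenvalue. Accordingly, I will determine the spectrum of $f^*|_{\NS_{\C}(X)}$ from that of $g^*|_{\NS_{\C}(Y)}$ via the structure of $\pi$, and then use $h$ amplified on $W$ to eliminate $1$ as an eigenvalue.

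The argument splits by the type of $\pi$. If $\pi$ is a flip, it is an isomorphism in codimension one, inducing an identification $\NS_{\Q}(X) \isom \NS_{\Q}(Y)$ that intertwines $f^*$ and $g^*$, so int-amplification of $g$ transfers to $f$. If $\pi$ is a divisorial contraction with $E = \Exc(\pi)$, then $f^{-1}(E) = E$ forces $f^*E = aE$ for some positive integer $a$ (integral by Lemma \ref{lem-alg-eig}), and the $f^*$-stable decomposition $\NS_{\Q}(X) = \pi^*\NS_{\Q}(Y) \oplus \Q[E]$ yields that the spectrum of $f^*$ is that of $g^*$ together with $\{a\}$. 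If $\pi$ is a Fano contraction, Lemma \ref{lem-rel-int} gives $f^*|_{\NS_{\Q}(X)/\pi^*\NS_{\Q}(Y)} = q \cdot \id$ for some positive integer $q$, so the triangular block form shows the spectrum of $f^*$ is that of $g^*$ together with $\{q\}$. In every case, the eigenvalues inherited from $g^*$ have modulus $>1$ by int-amplification of $g$, and it remains only to check that the extra positive integer $a$ (resp.~$q$) is at least $2$, i.e., that it differs from $1$.

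To rule out $1$ as an eigenvalue of $f^*|_{\NS_{\C}(X)}$, I suppose for contradiction that $f^*v = v$ for some nonzero $v$. Let $W_0$ be the normalization of the closure of the graph of $\tau$ in $W \times X$; the identity $f \circ \tau = \tau \circ h$ shows that $h \times f$ preserves this graph, inducing a surjective endomorphism $h_0 \colon W_0 \to W_0$ together with a birational morphism $\sigma \colon W_0 \to W$ and a surjective morphism $\tau_0 \colon W_0 \to X$ with $\sigma \circ h_0 = h \circ \sigma$ and $\tau_0 \circ h_0 = f \circ \tau_0$. Since $\tau_0$ is a surjective morphism of projective varieties, $\tau_0^*$ is injective on $\NS_{\C}$, so $\tau_0^*v \neq 0$ and $h_0^*\tau_0^*v = \tau_0^*f^*v = \tau_0^*v$. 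Pushing down by $\sigma_*$ to $W$ yields a nonzero class $\bar v = \sigma_*\tau_0^*v$ with $h^*\bar v = \bar v$, and a suitable $\Q$-factorial birational modification of $W$ promotes this to a Cartier eigenvector of $h^*$ with eigenvalue $1$, contradicting that $h$ is amplified.

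The hard part will be the last transfer step: since $\tau$ is only a rational map and $W$ need not be $\Q$-factorial, the class $\bar v$ is a priori only a Weil-divisor class, whereas amplification of $h$ controls only Cartier classes. To close the gap, one has to verify that $\bar v$ is nonzero (using that $\tau_0^{-1}(\Supp\,v)$ contains a component not contracted by $\sigma$, because $\tau$ is dominant on its domain of definition) and reduce to a $\Q$-factorial birational model on which $h$ lifts with its amplification intact, so that the relation $h^*\bar v = \bar v$ genuinely contradicts the spectral condition for $h$. Once these technicalities are dispatched, $a \ge 2$ (resp.~$q \ge 2$) and the spectral analysis above gives int-amplification of $f$.
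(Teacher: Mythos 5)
Your proposal takes a genuinely different route from the paper. You translate everything into a spectral statement on $\NS_{\C}$ and then try to import the amplification of $h$ through the graph closure $W_0$ back to $W$. The paper instead argues geometrically: for the birational cases (divisorial contraction or flip) it simply quotes \cite[Lemma 3.6]{Meng}, with no need for the $(W,h)$ data at all; for the Fano contraction it supposes $f$ is not int-amplified, produces a $\pi$-ample $D$ with $f^*D\equiv D$, restricts to a general fibre $F=\pi^{-1}(y)$ over a $g$-fixed point $y$ (such $y$ exist because $\Per(g)$ is dense), concludes $f|_F\in\Aut_0(F)$ after a power, and then contradicts Lemma~\ref{lem-amp-aut} applied with $\tau\colon W\dasharrow X$ as the dominant map. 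This uses the amplification of $h$ only through Fakhruddin's density theorem, entirely sidestepping the transfer of N\'eron--Severi classes to $W$.

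The gap you flag at the end is genuine and, I believe, not dispatchable along the lines you sketch. There are three compounding problems. First, amplification does not survive the passage to the graph model: if $h^*L-L=H$ is ample on $W$, then $h_0^*\sigma^*L-\sigma^*L=\sigma^*H$ is only nef and big on $W_0$, and the same difficulty recurs on any further $\Q$-factorial or $h$-equivariant birational model — so you cannot assume $h$ ``lifts with its amplification intact.'' Second, the non-vanishing of $\bar v=\sigma_*\tau_0^*v$ is not established: $v$ is a numerical class (not a divisor, so ``$\Supp v$'' is ill-defined), and even choosing a $\Q$-divisor representative $D$ it is not clear why some component of $\tau_0^*D$ should avoid the $\sigma$-exceptional locus; moreover $\sigma_*\tau_0^*v$ lands a priori only in the space of Weil classes, not in $\NS_{\Q}(W)$, unless $W$ is $\Q$-factorial, which is not assumed. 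Third, the characterization ``$h$ is amplified iff $1$ is not an eigenvalue of $h^*|_{\NS_{\C}(W)}$'' is asserted but is not among the results the paper cites, and the paper never uses it; the paper's use of amplification of $h$ is solely through the density of periodic points (Lemma~\ref{lem-amp-per}, Lemma~\ref{lem-amp-id}, Lemma~\ref{lem-amp-aut}). You should compare your reduction with Lemma~\ref{lem-amp-aut} and its proof: it encodes exactly the way the paper extracts a contradiction from ``$f|_F\in\Aut_0(F)$'' without ever pulling numerical classes back to $W$.
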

\begin{proof}
During the proof, we may always replacing $f, g$ and $h$ by suitable positive powers.
If $\pi$ is birational, then $f$ is int-amplified by \cite[Lemma 3.6]{Meng}.
Suppose $\pi$ is a Fano contraction and $f$ is not int-amplified.
Then $f^*D\equiv D$ for some $D\in \NS_{\Q}(X)\backslash\pi^*\NS_{\Q}(Y)$ by \cite[Proposition 3.3]{Meng} and Lemma \ref{lem-rel-int}.
We may assume $D$ is $\pi$-ample.
Since $g$ is int-amplified, $\Per(g)$ is Zariski dense in $Y$ by \cite[Theorem 5.1]{Fak}.
Let $y\in \Per(g)$ be general and we may assume $g(y)=y$.
Then $F:=\pi^{-1}(y)$ is irreducible.
Also $f(F)=F$.
Suppose $\tau$ is well defined over an open dense subset $U\subseteq W$.
Since $F$ is over general point, $\tau|_U^{-1}(F)\neq \emptyset$.
Note that $\dim(F)>0$, $D|_F$ is ample and $(f|_F)^* (D|_F)\equiv D|_F$.
Then we may assume $f|_F\in \Aut_0(F)$ (cf.~\cite[Theorem 1.2]{MZ}, \cite[Proposition 2.2]{Li}, \cite[Theorem 4.8]{Fuj}).
However, this contradicts Lemma \ref{lem-amp-aut}.
\end{proof}

We recall \cite[Lemma 9.2]{Meng} about the diagonalizable criterion for the pullback action.
\begin{lemma}\label{lem-g-diag} Let $(X, \Delta)$ be a $\Q$-factorial lc pair.
Let $\pi:X\dasharrow Y$ be either a divisorial contraction, a flip, or a Fano contraction of a $K_X+\Delta$-negative extremal ray.
Let $f:X\to X$ and $g:Y\to Y$ be surjective endomorphisms such that $g\circ \pi=\pi\circ f$.
Suppose $g^*|_{\NS_{\C}(Y)}$ is diagonalizable. Then so is $f^*|_{\NS_{\C}(X)}$.
\end{lemma}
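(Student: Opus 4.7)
The plan is to treat each type of MMP step separately. In every case, the hypothesis $g\circ\pi=\pi\circ f$ guarantees that $\pi^*\NS_{\C}(Y)$ is an $f^*$-invariant subspace of $\NS_{\C}(X)$ on which $f^*$ restricts (via $\pi^*$) to $g^*$, and the complementary direction has dimension at most one; the task is thus to control that one extra direction.

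If $\pi$ is a flip, then $\pi$ is an isomorphism in codimension one, so the strict transform yields a canonical isomorphism $\NS_{\C}(X)\cong \NS_{\C}(Y)$ intertwining $f^*$ with $g^*$, and diagonalizability transfers immediately. If $\pi\colon X\to Y$ is a divisorial contraction, let $E$ be the unique irreducible $\pi$-exceptional divisor; since $\rho(X)=\rho(Y)+1$, Lemma \ref{lem-rel-int} gives $f^*|_{\NS_{\C}(X)/\pi^*\NS_{\C}(Y)}=q\cdot\id$ for some positive integer $q$. The decisive step is to promote this to the equality $f^*E=qE$ in $\NS_{\C}(X)$ itself: writing $f^*E=\sum_P m_P\,P$ with $P$ prime and $m_P\ge 0$, any $P$ with $m_P>0$ satisfies $f(P)\subseteq E$, whence $g(\pi(P))=\pi(f(P))\subseteq \pi(E)\subsetneq Y$; the surjectivity of $g$ then forces $\pi(P)\subsetneq Y$, so $P$ is $\pi$-exceptional and hence $P=E$ by uniqueness. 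Therefore $\NS_{\C}(X)=\pi^*\NS_{\C}(Y)\oplus\C[E]$ is an $f^*$-stable direct sum of diagonalizable subspaces, and $f^*$ is diagonalizable.

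The Fano contraction case is the main obstacle. One still has a codimension-one, $f^*$-invariant subspace $\pi^*\NS_{\C}(Y)$ with quotient action a scalar $q\cdot\id$ ($q\in\Z_{>0}$), but there is no canonical exceptional divisor to serve as an eigenvector. Picking any lift $H\notin \pi^*\NS_{\C}(Y)$, one has $f^*H=qH+\pi^*D$ for some $D\in\NS_{\C}(Y)$, and diagonalizability of $f^*$ amounts to the solvability of $(g^*-q\,\id)u=-D$ in $\NS_{\C}(Y)$. This is automatic when $q$ is not an eigenvalue of $g^*|_{\NS_{\C}(Y)}$. The genuinely delicate case is when $q$ is such an eigenvalue: one then needs the projection of $D$ onto the $q$-eigenspace of $g^*$ to vanish. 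The plan to achieve this combines (i) the positivity of $f^*$ on the nef cone of $X$ (taking $H$ ample, so that $(f^*)^{k}H$ stays nef), and (ii) the relative geometry of the Fano contraction---its rank-one relative Picard, the $\pi$-ampleness of $-K_X$, and restriction to a $g$-periodic fibre $F_y$ (whose existence on a Zariski dense locus in the intended applications is ensured by Lemma \ref{lem-amp-per})---to force the required vanishing of the $q$-eigenspace component of $D$. Reconciling these positivity and geometric inputs with the diagonal decomposition of $g^*$ is where the bulk of the technical work lies.
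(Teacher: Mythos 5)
Your handling of the flip case is correct and matches the paper. The divisorial case also follows the paper's approach (identify $[E]$ as an $f^*$-eigenvector transverse to $\pi^*\NS_{\C}(Y)$), but your deduction has a logical gap: for a birational morphism $\pi$ and \emph{any} prime divisor $P$, one always has $\pi(P)\subsetneq Y$ since $\dim\pi(P)\le\dim P=\dim Y-1<\dim Y$, so ``$\pi(P)\subsetneq Y$'' does not by itself imply $P$ is $\pi$-exceptional. The correct argument uses that $\pi(E)$ has codimension $\ge 2$ in $Y$ and that $g$ is \emph{finite}, so $\dim\pi(P)=\dim g(\pi(P))\le\dim\pi(E)\le\dim Y-2$, which does force $P\subseteq\Exc(\pi)=E$. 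This is fixable, but as written the step is a non sequitur.

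The real gap is the Fano contraction case. You correctly reduce it to solving $(g^*-q\,\id)u=-D$ and identify that the obstruction is the component of $D$ in the $q$-eigenspace of $g^*$. But you then only sketch a plan, and that plan leans on restricting to $g$-periodic fibres via Lemma~\ref{lem-amp-per} --- which requires an amplified endomorphism. The lemma as stated makes \emph{no} amplification hypothesis on $f$ or $g$, so periodic points need not be dense and that route is unavailable. The paper instead invokes \cite[Lemma 9.2]{Meng}, and the proof technique one sees in the paper's own Lemma~\ref{lem-G-diag} (the monoid version) is a self-contained intersection-theoretic argument: choose an eigenbasis $x_2,\dots,x_k$ of $\NS_\C(Y)$ for $g^*$, pick a lift $x_1\notin\pi^*\NS_\C(Y)$, and if a nonzero off-diagonal term $y$ persists, exploit Lemma~\ref{lem-xd} and the projection formula for the equidimensional fibration $\pi$ (with $d=\dim X-\dim Y$) to derive a contradiction among intersection numbers $x_1^{d}\cdot y\cdot C$, $x_1^{d}\cdot x_j\cdot C$, $x_1^{d+1}\cdot C$. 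No periodic points, no ampleness of $f$, only the geometry of the Fano fibration. Until you supply an argument of that kind, the Fano case --- and hence the lemma --- is not proved.
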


\begin{proof}
If $\pi$ is a flip, then $\NS_{\C}(X)=\pi^*\NS_{\C}(Y)$ and hence $f^*|_{\NS_{\C}(X)}$ is diagonalizable.
If $\pi$ is a divisorial contraction with $E$ being the $\pi$-exceptional prime divisor, then $f^*E=\lambda E$ for some integer $\lambda\ge 1$. Note that $-E$ is $\pi$-ample by \cite[Lemma 2.62]{KM}. Its class $[E]\in \NS_{\C}(X)\backslash \pi^*\NS_{\C}(Y)$.
Note that $\pi^*\NS_{\C}(Y)$ is a codimension-$1$ subspace of $\NS_{\C}(X)$.
Then $f^*|_{\NS_{\C}(X)}$ is diagonalizable.
If $\pi$ is a Fano contraction, then $f^*|_{\NS_{\C}(X)}$ is diagonalizable by \cite[Lemma 9.2]{Meng}.
\end{proof}

\begin{lemma}\label{lem-xd}
Let $(X,\Delta)$ be an lc pair and let $\pi:X\to Y$ be a Fano contraction of a $(K_X+\Delta)$-negative extremal ray.
Let $m:=\dim(X)$, $n:=\dim(Y)$ and $d:=\dim(X)-\dim(Y)$.
Let $D\in \NS_{\C}(X)$ and $H_1, \cdots, H_n\in \NS_{\C}(Y)$ such that $D^d\cdot \pi^*H_1\cdots \pi^*H_n=0$.
Then either $D\in \pi^*\NS_{\C}(Y)$ or $H_1\cdots H_n=0$.
\end{lemma}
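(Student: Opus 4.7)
The plan is to use the projection formula to reduce the intersection number $D^d \cdot \pi^*H_1 \cdots \pi^*H_n$ to a single number on a general fibre of $\pi$, and then exploit that $\pi$ is the contraction of a single extremal ray so that the image of $\NS_{\C}(X)$ under restriction to a general fibre is at most one-dimensional.

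First I would pick a general fibre $F$ of $\pi$. Because $\pi$ is the contraction of an extremal ray, $\pi_*\OO_X = \OO_Y$, which together with the normality of $X$ forces $Y$ to be normal. In characteristic zero, generic smoothness makes $\pi$ equidimensional over a dense open subset of $Y$, so $\dim F = d$. Splitting $D$ into its real and imaginary parts (so that $D^d$ makes sense as a real cycle class) and applying the projection formula, one gets
\[
D^d \cdot \pi^*H_1 \cdots \pi^*H_n \;=\; \pi_*(D^d) \cdot H_1 \cdots H_n \;=\; (D|_F)^d \cdot (H_1 \cdots H_n),
\]
since $\pi_*(D^d)$ is a top-dimensional class on the normal $Y$ and is therefore equal to the scalar $(D|_F)^d$ times $[Y]$.

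Next I would invoke Definition \ref{def:extrem_ray}(3): for a curve $C$ generating the contracted ray, $\pi^*\NS_{\C}(Y) = \{D \in \NS_{\C}(X) : D \cdot C = 0\}$ is a codimension-one subspace of $\NS_{\C}(X)$ (nonzero functional $D \mapsto D \cdot C$ because any ample class has positive intersection with $C$). Fix a $\pi$-ample integral class $H \in \NS(X)$; then $H|_F$ is ample on the $d$-dimensional $F$, so $(H|_F)^d > 0$. The restriction map $r : \NS_{\C}(X) \to \NS_{\C}(F)$ vanishes on $\pi^*\NS_{\C}(Y)$ (since $\pi(F)$ is a point) and is nonzero on $H$, so by the codimension count $\ker r = \pi^*\NS_{\C}(Y)$ and $\operatorname{Im}(r) = \C \cdot H|_F$. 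Hence there is a unique $\lambda \in \C$ with $D|_F = \lambda \, H|_F$.

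Substituting back,
\[
0 \;=\; D^d \cdot \pi^*H_1 \cdots \pi^*H_n \;=\; \lambda^d (H|_F)^d \cdot (H_1 \cdots H_n).
\]
Since $(H|_F)^d > 0$, either $H_1 \cdots H_n = 0$ (and we are done), or $\lambda^d = 0$, forcing $\lambda = 0$ and hence $D|_F = 0$; by the previous paragraph the latter means $D \in \ker r = \pi^*\NS_{\C}(Y)$. The main obstacle is cleanly justifying the reduction $\pi_*(D^d) = (D|_F)^d [Y]$ for $\C$-classes and a possibly singular base $Y$; this is handled by the normality of $Y$, generic equidimensionality in characteristic zero, and linearity to reduce from $\C$-coefficients to $\R$- and $\Q$-coefficients where the standard intersection theory applies.
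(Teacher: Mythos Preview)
Your argument is correct and is essentially the same as the paper's: both exploit that $\pi^*\NS_{\C}(Y)$ has codimension one in $\NS_{\C}(X)$ to write $D$ as a scalar multiple of an ample class modulo pullbacks, and then apply the projection formula. The only cosmetic difference is that the paper stays global---writing $D = E + aA$ with $A$ very ample on $X$ and $E \in \pi^*\NS_{\C}(Y)$, expanding $(E+aA)^d\cdot\pi^*H_1\cdots\pi^*H_n = a^d A^d\cdot\pi^*H_1\cdots\pi^*H_n$, and using $\pi_*(A^d)=b[Y]$ with $b>0$---whereas you restrict to a general fibre $F$ and phrase the same decomposition as $D|_F=\lambda\,H|_F$; the two packagings are interchangeable.
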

\begin{proof}
Let $C$ be some curve contracted by $\pi$.
Suppose $D\not\in \pi^*\NS_{\C}(Y)$.
This is equivalent to saying $D\cdot C\neq 0$.
Let $A$ be a very ample Cartier divisor of $X$.
Then $(D-aA)\cdot C=0$ for some $a\neq 0$ and $E:=D-aA\in \pi^*\NS_{\C}(Y)$.
Let $Z:=A_1\cap \cdots \cap A_d$ where $A_1,\cdots, A_d$ are general members in the linear system $|A|$.
Note that $Z$ is pure $n$-dimensional and every irreducible component of $Z$ dominates $Y$.
In particular, $\pi_*(A^d)=bY$ for some $b>0$.
By the projection formula, we have
$0=D^d\cdot \pi^*H_1\cdots \pi^*H_n=(E+aA)^d\cdot \pi^*H_1\cdots \pi^*H_n=(a^d) A^d\cdot \pi^*H_1\cdots \pi^*H_n=(a^db) H_1\cdots H_n$.
Therefore, $H_1\cdots H_n=0$.
\end{proof}

Next, we provide a submonoid version of Lemma \ref{lem-g-diag}.

\begin{lemma}\label{lem-G-diag}
Let $X$ be a $\Q$-factorial lc projective variety
and let $(X, \Delta)$ be lc.
Let $\pi:X\dasharrow Y$ be either a divisorial contraction, a flip, or a Fano contraction of a $K_X+\Delta$-negative extremal ray.
Let $G$ be a subset of $\SEnd(X)$ such that $\pi$ is $G$-equivariant.
Suppose $(G|_Y)^*|_{\NS_{\C}(Y)}$ is diagonalizable.
Then so is $G^*|_{\NS_{\C}(X)}$.
\end{lemma}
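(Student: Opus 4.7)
The plan is to split into three cases according to whether $\pi$ is a flip, a divisorial contraction, or a Fano contraction, mirroring the argument for a single endomorphism in Lemma \ref{lem-g-diag} and enhancing it to simultaneously diagonalize the family $\{g^{*}\}_{g\in G}$. In each case I first apply Lemma \ref{lem-g-diag} to each $g\in G$ individually to obtain that $g^{*}$ is diagonalizable on $\NS_{\C}(X)$, so the remaining task is to produce a common eigenbasis.

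For a flip, $\pi^{*}\colon\NS_{\C}(Y)\to\NS_{\C}(X)$ is an isomorphism of $G^{*}$-representations and the conclusion is immediate. For a divisorial contraction, the unique $\pi$-exceptional prime divisor $E$ is set-theoretically preserved by each $g\in G$ (via $G$-equivariance and Lemma \ref{lem-ray}), so $g^{*}[E]=\lambda_{g}[E]$ for an integer $\lambda_{g}\ge 1$; since $-E$ is $\pi$-ample by \cite[Lemma 2.62]{KM}, the class $[E]$ lies outside the codimension-one subspace $\pi^{*}\NS_{\C}(Y)$, yielding the $G^{*}$-invariant direct sum $\NS_{\C}(X)=\pi^{*}\NS_{\C}(Y)\oplus\C[E]$. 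A simultaneous $(G|_{Y})^{*}$-eigenbasis of $\NS_{\C}(Y)$, pulled back via $\pi^{*}$ and augmented by $[E]$, serves as the common eigenbasis of $G^{*}$.

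The Fano case is the main obstacle. Here $\pi^{*}\NS_{\C}(Y)$ has codimension one in $\NS_{\C}(X)$, and by Lemma \ref{lem-rel-int} every $g^{*}$ acts as multiplication by a positive integer $q_{g}$ on the one-dimensional quotient. I would fix a simultaneous $(G|_{Y})^{*}$-eigenbasis $\{v_{1},\ldots,v_{n}\}$ of $\NS_{\C}(Y)$ with eigencharacters $g\mapsto\lambda_{g,i}$, pick any $w\in\NS_{\C}(X)\setminus\pi^{*}\NS_{\C}(Y)$, and expand $g^{*}w=\sum_{i}a_{g,i}\pi^{*}v_{i}+q_{g}w$. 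The goal is to choose constants $c_{i}$ so that $\widetilde{w}:=w+\sum_{i}c_{i}\pi^{*}v_{i}$ becomes a simultaneous eigenvector, i.e., $g^{*}\widetilde{w}=q_{g}\widetilde{w}$ for every $g\in G$, which reduces to the linear system $c_{i}(q_{g}-\lambda_{g,i})=a_{g,i}$ for all $g\in G$ and all $i$.

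The hard part will be to guarantee that a single choice of $c_{i}$ works across every $g\in G$. Individual diagonalizability (Lemma \ref{lem-g-diag}) forces $a_{g,i}=0$ whenever $\lambda_{g,i}=q_{g}$, so the only real constraint is the consistency of $c_{i}:=a_{g,i}/(q_{g}-\lambda_{g,i})$ over varying $g$. I plan to extract this consistency by combining the composition identity $a_{gh,i}=a_{g,i}\lambda_{h,i}+q_{g}a_{h,i}$ (obtained from $(g\circ h)^{*}w=h^{*}g^{*}w$) with the numerical characterization of $\pi^{*}\NS_{\C}(Y)$ provided by Lemma \ref{lem-xd}, exploiting closure of $G$ under composition to pin down a uniform $c_{i}$. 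With $\widetilde{w}$ and $\{\pi^{*}v_{i}\}$ in hand, simultaneous diagonalization of $G^{*}$ on $\NS_{\C}(X)$ follows.
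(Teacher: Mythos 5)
Your overall structure (three cases, with the Fano contraction as the hard one) and your identification of the two relevant tools — reduction to individual diagonalizability via Lemma~\ref{lem-g-diag}, plus the intersection-theoretic input of Lemma~\ref{lem-xd} — line up with the paper. The flip and divisorial-contraction cases are handled correctly, and in the Fano case your reduction to the consistency of the scalars $c_i$ across $g\in G$ is a faithful reformulation of what needs to be proved. However, the crucial consistency claim is only announced as a plan, not proved, and this is exactly where the substance of the paper's proof lies.

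The gap is real, not cosmetic. The composition identity $a_{gh,i}=a_{g,i}\lambda_{h,i}+q_g a_{h,i}$ cannot by itself force the $c_i$'s to agree: a flag-preserving family of \emph{individually} diagonalizable operators that is simultaneously diagonal on the hyperplane $\pi^*\NS_{\C}(Y)$ need not be simultaneously diagonalizable on the whole space — for instance $\operatorname{diag}(1,2)$ and $\left(\begin{smallmatrix}1&1\\0&2\end{smallmatrix}\right)$ on $\C^2$ both preserve the first coordinate axis and are individually diagonalizable, yet have distinct ``$c$''s and do not commute. So the consistency must come entirely from the geometry, and the precise use of Lemma~\ref{lem-xd} has to be spelled out. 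What the paper actually does in the Fano case is: pick $w$ (their $x_1$) to be an $f^*$-eigenvector outside $\pi^*\NS_{\C}(Y)$; write $g^*w=q_gw+y$ with $y\in\pi^*\NS_{\C}(Y)$; first adjust $w$ inside the $f^*$-$q_f$-eigenspace of $\pi^*\NS_{\C}(Y)$ so that $y$ has no component along those basis vectors $x_i$ with $\lambda_{f,i}=q_f$ (and automatically none along those with $\lambda_{g,i}=q_g$, by diagonalizability of $g^*$); and then show $y=0$ by contradiction: if some component $s_j\ne 0$ survives (necessarily with $\lambda_{f,j}\neq q_f$), choose a degree-$(\dim Y-1)$ monomial $C$ in the eigenbasis with $y\cdot C\ne 0$ and $x_j\cdot C\ne 0$ on $Y$, lift these nonvanishings to $X$ via Lemma~\ref{lem-xd}, and apply the projection formula three times — to $(g^*x_1)^{d+1}\cdot g^*C$, $(f^*x_1)^d\cdot f^*x_j\cdot f^*C$, and $(f^*x_1)^{d+1}\cdot f^*C$ — to obtain contradictory conclusions about whether $x_1^{d+1}\cdot C$ vanishes. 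This projection-formula computation is the heart of the Fano case and is absent from your proposal. One further small point: you appeal to ``closure of $G$ under composition,'' but the hypothesis only says $G$ is a \emph{subset} of $\SEnd(X)$; compositions $g\circ h$ still lie in $\SEnd(X)$, are $\pi$-equivariant, and have $(g\circ h)_Y^*$ diagonal in the common basis, so Lemma~\ref{lem-g-diag} does apply to them, but this needs to be said rather than assumed.
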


\begin{proof}
If $\pi$ is a flip, the lemma is trivial.
If $\pi$ is a divisorial contraction with $E$ being the $\pi$-exceptional prime divisor, then $[E]\in \NS_{\C}(X)\backslash \pi^*\NS_{\C}(Y)$ is a common eigenvector of $h^*|_{\NS_{\C}(X)}$ for any $h\in G$, and the lemma also holds.
Next we assume $\pi$ is a Fano contraction and regard $\NS_{\C}(Y)$ as a $1$-codimensional subspace of $\NS_{\C}(X)$.
Note that for any $h\in G$, $h^*|_{\NS_{\C}(X)}$ is diagonalizable by Lemma \ref{lem-g-diag}.

Let $f,g\in G$.
Suppose $f^*x_1=ax_1$ for some $x_1\in \NS_{\C}(X)\backslash\NS_{\C}(Y)$ and $a\neq 0$.
Let $x_2,\cdots, x_k$ be a basis of $\NS_{\C}(Y)$ such that $x_2,\cdots, x_k$ are eigenvectors of $h^*|_{\NS_{\C}(Y)}$ for any $h\in G$.
Suppose $f^*x_i=a_i x_i$ with $a_i\neq 0$.
We may assume that $a_i=a$ if and only if $i\le r$ for some $r\ge 1$.
Let $g^*x_1=bx_1+y$ for some $b\neq 0$ and $y\in \NS_{\C}(Y)$.
Write $y=\sum_{i=2}^k s_i x_i$ where $s_i\in \mathbb{C}$.
Since $g^*|_{\NS_{\C}(X)}$ is diagonalizable, $s_i\neq 0$ implies $g^*x_i\neq bx_i$.
Then for each $i\le r$ such that $s_i\neq 0$, we may replace $x_1$ by $x_1+t_ix_i$ for some suitable $t_i$, such that, finally $f^*x_1=ax_1$ and $g^*x_1=bx_1+\sum_{i=r+1}^k s_ix_i$.

Next we claim $y=0$.
Set $m:=\dim(X)$, $n:=\dim(Y)$ and $d:=m-n$.
Suppose $y\neq 0$.
Then $y\cdot C\neq 0$ on $Y$ for some $C=x_2^{\ell_2}\cdots x_k^{\ell_k}$ with $\sum_{i=2}^k \ell_i=n-1$.
So $x_j\cdot C\neq 0$ on $Y$ for some $j>r$ and hence $x_1^d\cdot y\cdot C\neq 0$ and $x_1^d\cdot x_j\cdot C\neq 0$ on $X$ by Lemma \ref{lem-xd}.
Let $f^*C=eC$ and $g^*C=e'C$ for some non-zero $e$ and $e'$.
By the projection formula, $$(\deg g)x_1^{d+1}\cdot C=(g^*x_1)^{d+1}\cdot g^*C=(b^{d+1}e')x_1^{d+1}\cdot C+((d+1)b^de')x_1^d\cdot y\cdot C.$$
Since $x_1^d\cdot y\cdot C\neq 0$, we have $x_1^{d+1}\cdot C\neq 0$.
On the other hand, by the projection formula, $$(\deg f) x_1^d\cdot x_j\cdot C=(f^*x_1)^d\cdot f^*x_j\cdot f^*C=(a^{d}a_j e)x_1^d\cdot x_j\cdot C.$$
Since $x_1^d\cdot x_j\cdot C\neq 0$, we have $\deg f=a^{d}a_j e$.
By the projection formula again, $$(\deg f) x_1^{d+1}\cdot C=(f^*x_1)^{d+1}\cdot f^*C=(a^{d+1}e)x_1^{d+1}\cdot C.$$
Since $a_j\neq a$, $\deg f=a^{d}a_j e\neq a^{d+1}e$.
Hence $x_1^{d+1}\cdot C=0$, a contradiction.
So $y=0$ as claimed.

Now $y=0$ implies $f^*|_{\NS_{\C}(X)}\circ g^*|_{\NS_{\mathbb{C}}(X)}= g^*|_{\NS_{\C}(X)}\circ f^*|_{\NS_{\C}(X)}$.
So $G^*|_{\NS_{\mathbb{C}}(X)}$ is a commutative set.
Since $G^*|_{\NS_{\mathbb{C}}(X)}$ consists of diagonalizable elements,
$G^*|_{\NS_{\mathbb{C}}(X)}$ is diagonalizable by \cite[Section 15.4]{Hu}.
\end{proof}

\begin{proof}[Proof of Theorem \ref{main-thm-GMMP}]
By \cite[Theorem 1.10]{Meng}, we have an $f$-equivariant relative MMP
$$X=X_0 \dashrightarrow \cdots \dashrightarrow X_i \dashrightarrow \cdots \dashrightarrow X_r=Y$$
over $Y$,
with $Y$ being $Q$-abelian.

By Theorem \ref{thm-GMMP}, this MMP is also $G$-equivaraint for some finite-index submonoid $G$ of $\SEnd(X)$.
Since $Y$ is $\Q$-abelian, any surjective endomorphism $g_r\in \SEnd(Y)$ is quasi-\'etale.
By \cite[Lemma 2.12]{Na-Zh} or \cite[Lemma 8.1 and Corollary 8.2]{CMZ},
$G_r$ lifts to a subsemigroup $G_A$ of $\SEnd(A) \le \End_{\variety}(A)$.
So (1) is proved.

(2) follows from \cite[Theorem 3.11 and Corollary 3.12]{MZ} and \cite[Lemmas 3.5 and 3.6]{Meng}. (3) follows from Lemma \ref{lem-amp-iamp}.
(4) and (5) follow directly from \cite[Theorem 1.10]{Meng}.

For (6), one direction is trivial and the case over $\C$ has been shown by Lemma \ref{lem-G-diag}.
Suppose $H_Y^*|_{\NS_{\Q}(Y)}$ is diagonalizable.
Then $H^*|_{\NS_{\mathbb{C}}(X)}$ is diagonalizable by Lemma \ref{lem-G-diag} and hence $H^*|_{\NS_{\Q}(X)}$ is commutative.
Let $h\in H$ and $\lambda$ be an eigenvalue of $h^*|_{\NS_{\C}(X)}$.
Then $\lambda$ is either an eigenvalue of $h_i^*|_{\NS_{\Q}(X_i)/\NS_{\Q}(X_{i+1})}$ or an eigenvalue of $h_r^*|_{\NS_{\Q}(Y)}$.
In particular, $\lambda\in \Q$.
So $h^*|_{\NS_{\Q}(X)}$ is diagonalizable for any $h\in G$.
By \cite[Section 15.4]{Hu}, $H^*|_{\NS_{\Q}(X)}$ is diagonalizable.
\end{proof}

\begin{proof}[Proof of Corollary \ref{main-cor-comp}]

For (I), (Ia) implies (Ib) by \cite[Theorem 3.11]{MZ} and \cite[Lemma 3.5]{Meng}.
Conversely, the diagonalizable case has been shown by Lemma \ref{lem-g-diag}.
Suppose $g$ is $q_g$-polarized, $h$ is $q_h$-polarized and $\tau_Y$ is $q$-polarized for some integers $q_g\ge 2, q_h\ge 2, q\ge 2$.
For each $i$, $g_i:=g|_{X_i}$ is $q_g$-polarized and $h_i:=h|_{X_i}$ is $q_h$-polarized by \cite[Lemma 3.10 and Theorem 3.11]{MZ}.
Since $\tau_Y^*|_{\NS_{\C}(Y)}$ is diagonalizable by \cite[Proposition 2.9]{MZ}, $\tau^*|_{\NS_{\C}(X)}$ is diagonalizable by Lemma \ref{lem-g-diag}.
Let $\lambda$ be an eigenvalue of $\tau^*|_{\NS_{\C}(X)}$.
Then $\lambda$ is either an eigenvalue of $\tau_i^*|_{\NS_{\C}(X_i)/\NS_{\C}(X_{i+1})}$ for some $i$ or an eigenvalue of $\tau_Y^*|_{\NS_{\C}(Y)}$.
Suppose $\lambda$ is an eigenvalue of $\tau_Y^*|_{\NS_{\C}(Y)}$ with $\dim(Y)>0$.
Note that $$\deg \tau_Y=q^{\dim(Y)}=(\deg h_Y)\cdot(\deg g_Y)=q_h^{\dim(Y)}\cdot q_g^{\dim(Y)}.$$
So $|\lambda|=q=q_h\cdot q_g$ by \cite[Lemma 2.1]{Na-Zh}.
Suppose $\lambda$ is an eigenvalue of $\tau_i^*|_{\NS_{\C}(X_i)/\NS_{\C}(X_{i+1})}$.
By Lemma \ref{lem-rel-int}, $$\tau_i^*|_{\NS_{\C}(X_i)/\NS_{\C}(X_{i+1})}=(h_i^*\circ g_i^*)|_{\NS_{\C}(X_i)/\NS_{\C}(X_{i+1})}=(q_h\,\id)\circ (q_g\,\id)=q\,\id.$$
Then $\lambda=q$.
Therefore, $\tau^*|_{\NS_{\C}(X)}$ is diagonalizable with all the eigenvalues being of the same modulus.
Applying \cite[Proposition 2.9]{MZ} and \cite[Lemma 2.3]{Na-Zh}, $\tau$ is $q$-polarized.

Suppose either $g$ or $h$ is int-amplified. Suppose $\tau_Y$ is int-amplified.
Let $\lambda$ be an eigenvalue of $\tau^*|_{\NS_{\C}(X)}$.
If $\lambda$ is an eigenvalue of $\tau_Y^*|_{\NS_{\C}(Y)}$, then $|\lambda|>1$ by \cite[Proposition 3.3]{Meng}.
Suppose $\lambda$ is an eigenvalue of $\tau_i^*|_{\NS_{\C}(X_i)/\NS_{\C}(X_{i+1})}$ for some $i$.
By Lemma \ref{lem-rel-int}, $g_i^*|_{\NS_{\C}(X_i)/\NS_{\C}(X_{i+1})}=a\,\id$ for some integer $a\ge 1$ and $h_i^*|_{\NS_{\C}(X_i)/\NS_{\C}(X_{i+1})}=b\,\id$ for some integer $b\ge 1$.
Since either $g$ or $h$ is int-amplified, either $a > 1$ or $b > 1$ by \cite[Proposition 3.3]{Meng}.
In particular, $\lambda=ab>1$.
By \cite[Proposition 3.3]{Meng} again, $\tau$ is int-amplified.

(II) follows from \cite[Corollary 3.12]{MZ} and \cite[Lemma 3.6]{Meng}.

(III) follows from Theorem \ref{main-thm-GMMP} by applying $H:=\{f,g\}$.
\end{proof}

\section{Proof of Theorems \ref{main-thm-rc} and \ref{main-thm-auto}}

In this section, we work over characteristic $0$.
We prove Theorems \ref{thm-q-pi-pt} and \ref{thm-auto} which include Theorems \ref{main-thm-rc} and \ref{main-thm-auto} as special cases.

\begin{definition}\label{def-q} Let $X$ be a normal projective variety.
\begin{itemize}
\item[(1)]
$q(X):=h^1(X,\mathcal{O}_X)=\dim H^1(X,\mathcal{O}_X)$ (the irregularity).
\item[(2)]
$\tilde{q}(X):=q(\tilde{X})$ with $\tilde{X}$ a smooth projective model of $X$.
\item[(3)]
$q^\natural(X):=\sup\{\tilde{q}(X')\,|\,X'\to X \text{ is finite surjective and \'etale in codimension one}\}$.
\item[(4)]
$\pi_1^{\alg}(X_{\reg})$ is the algebraic fundamental group of the smooth locus $X_{\reg}$ of $X$.
\end{itemize}
\end{definition}

\begin{theorem}\label{thm-q-pi-pt}
Let $X$ be a $\Q$-factorial klt projective variety admitting an int-amplified endomorphism $f$.
We use the notation $X=X_0 \dashrightarrow \cdots \dashrightarrow X_r=Y$ and the finite-index submonoid $G \le \SEnd(X)$ as in Theorem \ref{main-thm-GMMP}.
Suppose futher either $q^\natural(X)=0$ or $\pi_1^{\alg}(X_{\reg})$ is finite.
Then there is an integer $M \ge 1$ depending only on $X$ such that:
\begin{itemize}
\item[(1)] The $Y$ in Theorem \ref{main-thm-GMMP} is a point.
\item[(2)]
$G^*|_{\NS_{\Q}(X)}$ is a commutative diagonal
monoid with respect to a suitable $\Q$-basis $B$ of $\NS_{\Q}(X)$.
Further, for every $g$ in $G$,
the representation matrix $[g^*|_{\NS_{\Q}(X)}]_B$ relative to $B$,
is equal to $\diag[q_1, q_2, \dots]$ with integers $q_i \ge 1$.
\item[(3)]
$G \cap \Pol(X)$ is a subsemigroup of $G$, and consists exactly of those $g$ in $G$
such that $[g^*|_{\NS_{\Q}(X)}]_B = \diag[q, \dots, q]$ for some integer $q \ge 2$.
Further,
$$G \cap \Pol(X) \supseteq \langle \Pol(X)^{[M]} \rangle .$$
\item[(4)]
$G \cap \IAmp(X)$ is a subsemigroup of $G$, and consists exactly of those $g$ in $G$
such that $[g^*|_{\NS_{\Q}(X)}]_B = \diag[q_1, q_2, \dots]$ with integers $q_i \ge 2$.
Further,
$$G \, (G \cap \IAmp(X)) = G \cap \IAmp(X)  \supseteq  \langle \IAmp(X)^{[M]} \rangle ;$$
any $h$ in $\SEnd(X)$ has $(h^M)^* = (g_1^*)^{-1} g_2^*$ on $\NS_{\Q}(X)$
for some $g_i$ in $G \cap \IAmp(X)$.
\item[(5)]
We have $h^M\in G$ and that $h^*|_{\NS_{\C}(X)}$ is diagonalizable for every $h \in \SEnd(X)$.
\end{itemize}
\end{theorem}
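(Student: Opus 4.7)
The plan is to reduce everything to the claim that the $Q$-abelian base $Y$ of the MMP in Theorem~\ref{main-thm-GMMP} is a point, and then extract (2)--(5) from the diagonalizability criterion of Lemma~\ref{lem-G-diag} together with the finite-index structure of $G$ inside $\SEnd(X)$. For~(1), I would argue by contradiction: if $\dim Y > 0$, Theorem~\ref{main-thm-GMMP}(1) furnishes a quasi-\'etale Galois cover $A \to Y$ with $A$ an abelian variety of positive dimension. Under the hypothesis $q^{\natural}(X) = 0$, one has in particular $\tilde{q}(X) = 0$, but any smooth model $\tilde{X} \to X$ dominates $Y$, so the induced morphism $\Alb(\tilde{X}) \twoheadrightarrow \Alb(Y)$ forces $q(\tilde{X}) \ge \dim Y > 0$, a contradiction. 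Under the alternative hypothesis that $\pi_1^{\alg}(X_{\reg})$ is finite, I would pull back the multiplication-by-$n$ \'etale covers of $A$ through $A \to Y$ and $X \dashrightarrow Y$, normalize, and verify that the resulting covers of $X$ are quasi-\'etale (using that the MMP-indeterminacy and Fano-contraction branch loci sit in codimension $\ge 2$ on the smooth locus of $X$), producing quasi-\'etale covers of $X$ of unbounded degree and contradicting the assumption.

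With $Y$ a point, $\NS_{\Q}(Y) = 0$, so Theorem~\ref{main-thm-GMMP}(6) applied with $H := G$ shows $G^*|_{\NS_{\Q}(X)}$ is simultaneously diagonalizable in some $\Q$-basis $B$ and pairwise commuting. The diagonal entries are algebraic integers by Lemma~\ref{lem-alg-eig}, and rational, hence integers; positivity then follows from a standard iteration argument against a fixed ample class (using that $g^*$ preserves the nef cone and $g_* g^* = (\deg g)\,\id$ on $\NS$ to rule out eigenvalue $0$), giving (2).

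For (3) and (4), \cite[Proposition 2.9]{MZ} and \cite[Proposition 3.3]{Meng} characterize membership of $g \in G$ in $\Pol(X)$ (resp.\ $\IAmp(X)$) via $[g^*|_{\NS_{\Q}(X)}]_B = \diag[q,\ldots,q]$ with an integer $q \ge 2$ (resp.\ $\diag[q_1, q_2, \ldots]$ with every $q_i \ge 2$); both conditions are manifestly closed under composition of positive-integer diagonal matrices, yielding the subsemigroup assertions. Multiplying a diagonal matrix with entries $\ge 1$ by one with entries $\ge 2$ again yields entries $\ge 2$, giving $G \cdot (G \cap \IAmp(X)) = G \cap \IAmp(X)$. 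The explicit decomposition $(h^M)^* = (g_1^*)^{-1} g_2^*$ in (4) is obtained by choosing $f \in G \cap \IAmp(X)$ (a power of the given int-amplified endomorphism) and setting $g_1 := f^N$, $g_2 := h^M \circ f^N$ for $N$ large enough that all diagonal entries of $g_2^*$ exceed $1$; a direct diagonal computation gives $(g_1^*)^{-1} g_2^* = (h^M)^*$.

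For (5), the definition of finite-index submonoid yields a chain $G = G_0 \le \cdots \le G_r = \SEnd(X)$ with finite quotient groups $F_i$; setting $M := \prod_i |F_i|$, which depends only on $X$, an induction on $i$ shows $h^M \in G$ for every $h \in \SEnd(X)$. Then $h^*|_{\NS_{\C}(X)}$ is invertible (from $h_* h^* = (\deg h)\,\id$ on $\NS$), and $(h^*)^M = (h^M)^*$ is diagonalizable by (2) extended to $\C$, so $h^*$ itself is diagonalizable: on a generalized eigenspace for a nonzero eigenvalue $\lambda$, writing $h^* = \lambda I + N$ with $N$ nilpotent, the identity $(\lambda I + N)^M = \lambda^M I$ forces $N = 0$. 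The containments $\langle \Pol(X)^{[M]} \rangle \subseteq G \cap \Pol(X)$ and $\langle \IAmp(X)^{[M]} \rangle \subseteq G \cap \IAmp(X)$ in (3) and (4) also reduce to this step. The main obstacle I expect is Step~1 --- especially the $\pi_1^{\alg}$ branch, where one must verify that the pulled-back covers of $X$ are genuinely quasi-\'etale rather than merely generically \'etale; the $q^{\natural}$ branch is cleaner because the Albanese argument needs only generic dominance.
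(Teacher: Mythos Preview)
Your overall structure matches the paper's, but there is a genuine gap in your argument for~(1) in the $q^{\natural}$ branch. You pass from $q^{\natural}(X)=0$ to $\tilde q(X)=0$ and then assert that $\Alb(\tilde X)\twoheadrightarrow \Alb(Y)$ forces $q(\tilde X)\ge \dim Y$. But a $Q$-abelian variety can have trivial Albanese: for $A=E^3$ with $\mathbb{Z}/2$ acting by $-1$, the fixed locus is $0$-dimensional (codimension~$3$), so $Y=A/(\mathbb{Z}/2)$ is $Q$-abelian, yet $q(Y)=0$. The repair is precisely the cover-pullback you already sketch in the $\pi_1^{\alg}$ branch: pull back the quasi-\'etale cover $A\to Y$ along $X\to Y$ --- which by Theorem~\ref{main-thm-GMMP}(5) is an equi-dimensional \emph{morphism} with irreducible fibres, so the pullback stays quasi-\'etale and irreducible --- to obtain $X'\to X$ quasi-\'etale with $X'\to A$ having connected fibres; then $\tilde q(X')\ge \dim A>0$, contradicting $q^{\natural}(X)=0$. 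The same equi-dimensionality (no indeterminacy; preimages of codimension~$\ge 2$ loci remain codimension~$\ge 2$) also dissolves the quasi-\'etaleness worry you flag for the $\pi_1^{\alg}$ branch. The paper does not argue~(1) directly: it records that $\pi:X\to Y$ is equi-dimensional with irreducible fibres and cites \cite[Lemma 11.1]{CMZ} and the proof of \cite[Lemma 9.1]{MZ}.

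For~(2), your ``standard iteration argument'' for positivity of the diagonal entries is not a proof as written (an eigenvector for $g^*$ need not be nef). The paper avoids this entirely: since $Y$ is a point, every eigenvalue of $g^*|_{\NS_{\Q}(X)}$ is the eigenvalue of some $g_j^*|_{\NS_{\Q}(X_j)/\NS_{\Q}(X_{j+1})}$, which is a positive integer by Lemma~\ref{lem-rel-int}. Your treatment of~(3)--(5) is correct and essentially the paper's, which is terser: it invokes Corollary~\ref{main-cor-comp} for the semigroup claims and for $G\,(G\cap\IAmp(X))=G\cap\IAmp(X)$, takes $g_1=f^M$ and $g_2=h^M\circ f^M$ directly (no auxiliary $N$ needed, since $h^M\circ f^M\in\IAmp(X)$ already by Corollary~\ref{main-cor-comp} with $Y$ a point), and declares~(5) ``clear''.
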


\begin{proof}
We apply Theorem \ref{main-thm-GMMP} and use the notation there.
Note that $\pi:X\to Y$ is equi-dimensional and $\pi$ has irreducible fibres.
So (1) follows from \cite[Lemma 11.1]{CMZ} and the proof of \cite[Lemma 9.1]{MZ}.

The first half of (2) has been shown in Theorem \ref{main-thm-GMMP}.
For any $g\in G$, let $\lambda$ be an eigenvalue of $g^*|_{\NS_{\Q}(X)}$.
Then $\lambda$ is an eigenvalue of $g_j^*|_{\NS_{\Q}(X_j)/\NS_{\Q}(X_{j+1})}$ for some $j$.
By Lemma \ref{lem-rel-int}, $\lambda$ is a positive integer.
So (2) is proved.

By Corollary \ref{main-cor-comp}, $G\cap \Pol(X)$ and $G \cap \IAmp(X)$ are both semigroups.
For any $g\in G \cap \Pol(X)$, $[g^*|_{\NS_{\Q}(X)}]_B= \diag[q,\cdots, q]$ for some integer $q\ge 2$ by (2) and \cite[Lemma 2.1]{Na-Zh}.
For any $g\in G \cap \IAmp(X)$, $[g^*|_{\NS_{\Q}(X)}]_B = \diag[q_1, q_2, \dots]$ with integers $q_i \ge 2$ by (2) and \cite[Proposition 3.3]{Meng}.
Note that $\langle \SEnd(X)^{[M]} \rangle\subseteq G$ for some $M>0$.
So $G \cap \Pol(X) \supseteq \langle \Pol(X)^{[M]} \rangle$
and $G \cap \IAmp(X)  \supseteq  \langle \IAmp(X)^{[M]} \rangle$.
Since $G$ is a monoid,
$G \, (G \cap \IAmp(X)) = G \cap \IAmp(X)$ by Corollary \ref{main-cor-comp}.
For any $h\in \SEnd(X)$, $g_2:=h^M\circ f^M\in G \cap \IAmp(X)$.
Let $g_1:=f^M$, which is in $G \cap \IAmp(X)$.
Then $(h^M)^*=(g_1^*)^{-1}\circ g_2^*$ on $\NS_{\Q}(X)$.
So (3) and (4) are proved.
(5) is clear.
\end{proof}

\begin{theorem}\label{thm-auto}
Let $X$ be a $\Q$-factorial klt projective variety admitting an int-amplified endomorphism $f$.
Suppose futher either $q^\natural(X)=0$ or $\pi_1^{\alg}(X_{\reg})$ is finite.
Then we have:
\begin{itemize}
\item[(1)]
$\Aut(X)/\Aut_0(X)$ is a finite group.
Further, $\Aut_0(X)$ is a linear algebraic group.
\item[(2)]
Every amplified endomorphism of $X$ is int-amplified.
\item[(3)]
$X$ has no automorphism of positive entropy (nor amplified automorphism).
\end{itemize}
\end{theorem}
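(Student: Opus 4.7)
The plan is to extract everything from Theorem~\ref{thm-q-pi-pt}, which supplies a finite-index submonoid $G\le\SEnd(X)$ and an integer $M\ge 1$ such that $h^M\in G$ for every $h\in\SEnd(X)$, every $g\in G$ acts on $\NS_\Q(X)$ as $\diag[q_1,q_2,\ldots]$ with integers $q_i\ge 1$ in a common basis $B$, and $h^*|_{\NS_\C(X)}$ is always diagonalizable; moreover the base $Y$ of the $G$-equivariant MMP produced in Theorem~\ref{main-thm-GMMP} is a single point.

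I would prove (3) first. Given $\sigma\in\Aut(X)$, both $\sigma^M$ and $(\sigma^{-1})^M=\sigma^{-M}$ lie in $G$, and their diagonal representations on $\NS_\Q(X)$, namely $\diag[q_1,q_2,\ldots]$ and $\diag[q_1^{-1},q_2^{-1},\ldots]$, must simultaneously have positive integer entries $\ge 1$; hence $q_i=1$ for all $i$. Thus $(\sigma^M)^*=\id$ on $\NS_\R(X)$, the first dynamical degree of $\sigma$ equals $1$, and $\sigma$ has zero topological entropy. If in addition $\sigma$ were amplified, part~(2) below would make $\sigma$ int-amplified, giving $\sigma^*L=L+H$ with $L,H$ ample; then $(\sigma^*L)^n>L^n$, contradicting $(\sigma^*L)^n=(\deg\sigma)L^n=L^n$. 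For~(2), let $\tau$ be amplified. Since amplifiedness is equivalent for a map and its positive powers (as recorded in Section~2), it suffices to treat $\tau^M\in G$. Because $Y$ is a point, the descending $\tau^M_Y$ is, by the paper's convention, polarized and hence int-amplified, so Theorem~\ref{main-thm-GMMP}(3) promotes the amplified $\tau^M$ to int-amplified, whence $\tau$ is int-amplified.

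Finally for~(1), the calculation in (3) shows that every element of the image of the representation $\Aut(X)\to\GL(\NS(X)/\mathrm{torsion})$ has $M$-th power the identity; Minkowski's lemma (a torsion subgroup of $\GL_n(\Z)$ injects into $\GL_n(\Z/3)$ and is thus finite) bounds its order, so the image is finite, and the kernel $\Aut_\tau(X)$ has finite index in $\Aut(X)$. The subgroup $\Aut_0(X)$ has finite index in $\Aut_\tau(X)$ by a classical Lieberman-type result, available in our projective setting via boundedness of components of the Hilbert/Douady scheme parameterizing graphs in $X\times X$. Combining these yields $[\Aut(X):\Aut_0(X)]<\infty$. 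Linearity of $\Aut_0(X)$ then follows from Chevalley's structure theorem once the Albanese of $X$ is trivial, which is guaranteed by either hypothesis $q^\natural(X)=0$ or $\pi_1^{\alg}(X_{\reg})$ finite. The main obstacle I expect is precisely this Lieberman-style finite-index step in the singular klt setting, where the clean references are for smooth kähler manifolds, and one must either pass to a functorial resolution while tracking how $\Aut(X)$ lifts, or work directly on the Hilbert/Douady scheme of graphs of automorphisms of $X$.
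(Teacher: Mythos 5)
Your proposal is correct and follows essentially the same route as the paper's proof. The core observations are identical: $(g^M)^*|_{\NS_{\Q}(X)}=\id$ for every $g\in\Aut(X)$ (you derive this from the positivity of the diagonal entries of $g^M$ and $g^{-M}$ in the common basis $B$, the paper simply notes that $g$ is invertible); boundedness of the image in $\GL(\NS(X)/\torsion)$ plus a Lieberman--Fujiki finiteness statement gives $[\Aut(X):\Aut_0(X)]<\infty$; rational connectedness (forced by $Y$ being a point) kills the Albanese and makes $\Aut_0$ linear via Matsumura/Chevalley; (2) is promoted from $\tau^M\in G$ via Theorem~\ref{main-thm-GMMP}(3) (equivalently Lemma~\ref{lem-amp-iamp}) using the convention that the descent to the point $Y$ is int-amplified; and (3) then follows. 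The only substantive difference is cosmetic: you invoke Minkowski's lemma explicitly and worry about the Lieberman step in the singular setting, whereas the paper constructs the $g^*$-fixed ample class $H_g=\sum_{i=0}^{M-1}(g^i)^*H$ directly and cites \cite[Theorem~1.2]{MZ} (which already handles normal projective varieties) together with \cite{Li,Fuj}; your Hilbert-scheme-of-graphs remedy for the singular case is precisely the mechanism behind those references, so your anticipated obstacle is already covered by the literature cited.
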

\begin{proof}
By Theorem \ref{thm-q-pi-pt}, we may run MMP 
$X=X_0 \dashrightarrow \cdots \dashrightarrow X_r=Y$
as in Theorem \ref{main-thm-GMMP}, with $Y$ being a point.
Moreover, for some $M>0$, $(g^M)^*|_{\NS_{\Q}(X)}=\id$ for any $g\in \Aut(X)$ since $g$ has inverse.
Let $H$ be an ample Cartier divisor of $X$ and let $H_g:=\sum_{i=0}^{M-1}(g^i)^*H$.
Then $H_g$ is ample and $g^*H_g\equiv H_g$.
Thus $[\Aut(X):\Aut_0(X)]<\infty$ (cf.~\cite[Theorem 1.2]{MZ}, \cite[Proposition 2.2]{Li}, \cite[Theorem 4.8]{Fuj}).

Let $X' \to X$ be an $\Aut(X)$-equivariant resolution of $X$.
By Theorems \ref{main-thm-GMMP} and \ref{thm-q-pi-pt}, $X$ and hence $X'$ are rationally connected.
So $X'$ has trivial $\Alb(X')$.
In particular, $\Aut_0(X')$ and hence $\Aut_0(X)$ are linear (cf.~\cite{Ma}).
Therefore, (1) is proved.

(2) follows from Lemma \ref{lem-amp-iamp}; see also Theorem \ref{main-thm-GMMP}.
(3) follows from (1) and (2); see also \cite[Lemma 3.10]{Meng}.
\end{proof}

\begin{proof}[Proof of Theorems \ref{main-thm-rc} and \ref{main-thm-auto}]
By \cite[Corollary 4.18]{De}, $\pi_1^{\alg}(X_{\reg})$ is trivial when $X$ is a rationally connected smooth projective variety.
Then Theorem \ref{main-thm-rc} follows from Theorem \ref{thm-q-pi-pt} and Theorem \ref{main-thm-auto} follows from Theorem \ref{thm-auto}.
\end{proof}

\end{document}